\newcommand{\Ps}{\mathbf{P}}
\newcommand{\C}{\mathbf{C}}
\newcommand{\Q}{\mathbf{Q}}
\newcommand{\Z}{\mathbf{Z}}
\newcommand{\G}{\mathbf{G}}
\newcommand{\cL}{\mathcal{L}}
\newcommand{\cO}{\mathcal{O}}
\newcommand{\cD}{\mathcal{D}}
\newtheorem{lemma}{Lemma}[section]
\newtheorem{proposition}[lemma]{Proposition}
\newtheorem{theorem}[lemma]{Theorem}
\newtheorem{corollary}[lemma]{Corollary}
\newtheorem{conjecture}[lemma]{Conjecture}
\theoremstyle{definition}
\newtheorem{notation}[lemma]{Notation}
\newtheorem{definition}[lemma]{Definition}
\newtheorem{construction}[lemma]{Construction}
\newtheorem{example}[lemma]{Example}
\theoremstyle{remark}
\newtheorem{remark}[lemma]{Remark}
\DeclareMathOperator{\CH}{CH}
\DeclareMathOperator{\NL}{NL}
\DeclareMathOperator{\red}{red}
\DeclareMathOperator{\prim}{prim}
\DeclareMathOperator{\codim}{codim}
\DeclareMathOperator{\rank}{rank}
\title[Hodge loci of linear subvarieties]{On a conjecture on Hodge loci of  linear combinations of linear subvarieties}
\author{Remke Kloosterman}
\email{klooster@math.unipd.it}
\address{Universit\`a degli Studi di Padova,
Dipartimento di Matematica ``Tullio Levi-Civita",
Via Trieste 63,
35121 Padova, Italy}
\thanks{The author would like to thank Hossein Movasati for inviting him to IMPA in 2019 and for several comments on a previous version of this paper. The author would also like to thank Roberto Villaflor Loyola  for several comments on a previous version of this paper.
The author is a member of PRIN-2020 Project ``Curves, Ricci flat Varieties and their Interactions" and of INdAM-GNSAGA. This work was supported  by the BIRD-SID-2021 UniPD
project
 ``Moduli spaces of curves and hypersurface singularities".}
\date{\today}
\begin{document}
\begin{abstract}
For each $k \geq 5$ we give a counterexample to a conjecture of Movasati on the dimension of certain Hodge loci of cubic hypersurfaces in $\Ps^{2k+1}$ containing two $k$-planes intersecting in dimension $k-3$. We give similar examples for  Hodge loci of cubic hypersurfaces in $\Ps^{2k+1}$ containing two $k$-planes intersecting in dimension $k-2$ and for quartic  hypersurfaces in $\Ps^{2k+1}$ containing two $k$-planes intersecting in dimension $k-2$.

Moreover, we present new evidence for Movasati's conjecture for  the values of $k$ for which our type of counterexamples cannot exist, i.e., for $k=3,4$.
\end{abstract}
\maketitle

\section{Introduction}\label{secInt}
Fix  integers $k\geq 1,d\geq 2+\frac{2}{k}$.  Let $U_d\subset \Ps(\C[x_0,\dots,x_{2k+1}]_d)$ be the space of smooth hypersurfaces of degree $d$ in $\Ps^{2k+1}$. Let $[X]\in U_d$ be a smooth hypersurface containing a non-trivial Hodge class. Let  $\gamma\in H^{2k}(X,\Q)\cap H^{k,k}(X)$ be such a Hodge class, i.e.,  $\gamma_{\prim}$ is  nonzero. %, i.e, $\gamma$ is not a multiple of the $k$-fold selfintersection of the hyperplane case. 
In a simply connected neighborhood of $[X]$ in $U_d$ one can define the Hodge locus $\NL(\gamma)$  as the locus where $\gamma$ remains of type $(k,k)$ under parallel transport. This locus has a natural analytic scheme structure, which turns out to be algebraic. 
Similarly, if we pick several Hodge classes $\gamma_1,\dots,\gamma_s$ then we can define $\NL(\gamma_1,\dots,\gamma_s)$ as the locus where all $\gamma_i$ remain of type $(k,k)$. For more details see Section~\ref{secIdealCon} and the references therein.
 %This locus depends only on the subspace of $H^k(X,\Q)_{\prim}$ generated by the $(\gamma_i)_{\prim}$.

Suppose now that $[X]\in U_d$ contains two distinct $k$-planes $\Pi_1,\Pi_2$. Let $c=k-\dim \Pi_1\cap \Pi_2$. A question posed in recent work by Movasati \cite{MovConb} is whether the equality  $\NL(a[\Pi_1]+b[\Pi_2])=\NL([\Pi_1],[\Pi_2])$ holds as analytic schemes. If $ab=0$ then $\NL(a[\Pi_1]+b[\Pi_2])\neq \NL([\Pi_1],[\Pi_2])$ holds. However, if $ab\neq 0$ then the answer depends on the choice of $(c,d,k)$.
A result by Villaflor \cite{RVLTs} implies that if both $(d-2)(c-1)>2$  and $ab\neq 0$ hold then $\NL(a[\Pi_1]+b[\Pi_2])=\NL([\Pi_1],[\Pi_2])$ as analytic schemes. 

We will now focus on the cases  not covered by Villaflor. The case $c=1$ is covered in the paper \cite{KloCodimOne}. In this paper we will consider the remaining cases where both $k-m>1$ and $(d-2)(c-1)\leq 2$ hold, i.e., $(d,c) \in \{(4,2),(3,3),(3,2)\}$.
Movasati \cite[Chapters 18 and 19]{MovBook} posted several conjectures on the irreducible components of the  Hodge locus $\NL(a[\Pi_1]+b[\Pi_2])$ where $a,b\in \Z$ such that $ab\neq 0$.
 Movasati focused mainly on  the case $d=3,c=3$:

\begin{conjecture}[{Movasati, \cite[Conjecture 19.1(2)]{MovBook}}] \label{ConMovIntro} Let $k\geq 2$ be an integer.
Suppose $X$ is a smooth hypersurface of degree $3$ in $\Ps^{2k+1}$ containing two $k$-planes such that their intersection has dimension $k-3$. Let $a,b$ be integers with  $ab\neq 0 $ then $\codim_{U_d} \NL(a[\Pi_1]+b[\Pi_2])=\codim_{U_d} \NL([\Pi_1],[\Pi_2])-1$.
\end{conjecture}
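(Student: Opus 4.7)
The plan is to reduce Conjecture \ref{ConMovIntro} to a linear-algebra question about multiplication in the Jacobian ring of $X$, via Griffiths' description of primitive cohomology combined with Villaflor's explicit formula for the class of a linear subvariety. Let $f\in\C[x_0,\dots,x_{2k+1}]_3$ define $X$ and $R_f$ be its Jacobian ring. Griffiths' theorem identifies $T_{[X]}U_3\cong R_f^3$ and gives $H^{k,k}_{\prim}(X)\cong R_f^{k+1}$, $H^{k-1,k+1}_{\prim}(X)\cong R_f^{k+4}$; under these, the infinitesimal period map sends $g\in R_f^3$ and $\gamma\leftrightarrow P_\gamma\in R_f^{k+1}$ to $g\cdot P_\gamma$. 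Writing $\phi_i\colon R_f^3\to R_f^{k+4}$ for multiplication by the primitive part of $[\Pi_i]$, one has
\[
T_{[X]}\NL([\Pi_1],[\Pi_2]) = \ker\phi_1\cap\ker\phi_2, \qquad T_{[X]}\NL(a[\Pi_1]+b[\Pi_2]) = \ker(a\phi_1+b\phi_2).
\]
Granting reducedness of the Hodge loci at $[X]$, the conjecture is equivalent to the statement that the image $L\subset R_f^{k+4}\oplus R_f^{k+4}$ of $(\phi_1,\phi_2)$ meets the antidiagonal $\Delta_{a,b}=\{(w_1,w_2)\colon aw_1+bw_2=0\}$ in a subspace of dimension exactly $1$.

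To make this explicit, I would choose coordinates so that $\Pi_1=\{x_0=\dots=x_k=0\}$ and $\Pi_2=\{\ell_0=\dots=\ell_k=0\}$ with $\mathrm{span}(x_0,\dots,x_k,\ell_0,\dots,\ell_k)$ of dimension $k+4$ (as forced by $\dim \Pi_1\cap\Pi_2=k-3$), then use Villaflor's formula to write $P_{\Pi_i,\prim}$ as a polynomial of degree $k+1$ built from cofactors of $f$. The vanishing of $f$ on $\Pi_1\cup\Pi_2$ pins $f$ down to a concrete normal form, determining $J(f)$ and the multiplication maps $\phi_i$ at the level of monomials.

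With this explicit model in hand, the lower bound $\dim(L\cap\Delta_{a,b})\geq 1$ is obtained by exhibiting a single first-order deformation $g_0\in R_f^3$ satisfying $a\phi_1(g_0)+b\phi_2(g_0)=0$ but $\phi_1(g_0)\neq 0$. The natural candidate is the direction tangent to a pencil of cubics along which an algebraic $k$-cycle of class $a[\Pi_1]+b[\Pi_2]$ persists while $[\Pi_1]$ and $[\Pi_2]$ individually fail to stay of type $(k,k)$; concretely one tries to build such a cycle by coupling deformations of $\Pi_1$ and $\Pi_2$ inside the fixed residual $(k+3)$-plane spanned by $\Pi_1\cup\Pi_2$ so as to preserve their homology sum weighted by $(a,b)$. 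For the matching upper bound $\dim(L\cap\Delta_{a,b})\leq 1$, any further element corresponds to a syzygy in $R_f^{k+4}$ between $P_{\Pi_1}\cdot R_f^3$ and $P_{\Pi_2}\cdot R_f^3$, and the goal is to rule these out by a direct enumeration of monomial relations in the ideal generated by $P_{\Pi_1}$ and $P_{\Pi_2}$.

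The main obstacle lies in this final upper bound. For small $k$, $R_f^{k+4}$ is combinatorially small enough that syzygies between $P_{\Pi_1}$ and $P_{\Pi_2}$ can be classified by hand. As $k$ grows, however, $\codim\NL([\Pi_1],[\Pi_2])$ grows with it, and high-degree relations among $x_0,\dots,x_k,\ell_0,\dots,\ell_k$ inside $J(f)$ could produce additional elements of $L\cap\Delta_{a,b}$ corresponding to genuinely new first-order deformations of $X$ preserving $a[\Pi_1]+b[\Pi_2]$ but neither of $[\Pi_1]$ and $[\Pi_2]$ separately. Controlling these uniformly in $k$ is where I expect the real difficulty to lie, and where the numerical balance underlying the conjecture should be probed most carefully.
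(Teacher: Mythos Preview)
The statement you are attempting to prove is a \emph{conjecture}, and the present paper shows it is \emph{false} for every $k\geq 5$. So your proposal cannot succeed as stated, and the failure is located precisely in the step you treat as the easier one.

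In your reformulation, the conjecture becomes $\dim(L\cap\Delta_{a,b})=1$. You argue that the lower bound $\geq 1$ should come from an explicit first-order deformation $g_0$ built by ``coupling deformations of $\Pi_1$ and $\Pi_2$ inside the fixed residual $(k{+}3)$-plane.'' This is exactly what the paper refutes: for $k\geq 5$ there exist smooth cubic $X$ containing $\Pi_1,\Pi_2$ with $\dim(\Pi_1\cap\Pi_2)=k-3$ for which the multiplication pairing $((I_1+I_2)/I_2)_d\times((I_1+I_2)/I_2)_{kd-2k-2}\to (S/I_2)_{(k+1)(d-2)}$ has \emph{no} left kernel (Examples~\ref{ExaConB} and the dimension-raising trick of Proposition~\ref{prpDimTrick}). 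Via Theorem~\ref{thmTsp} and Lemma~\ref{lemmakey} this gives $\ker(a\phi_1+b\phi_2)=\ker\phi_1\cap\ker\phi_2$, i.e.\ $L\cap\Delta_{a,b}=0$, for all but one nonzero ratio $b/a$. Hence $\codim\NL(a[\Pi_1]+b[\Pi_2])=\codim\NL([\Pi_1],[\Pi_2])$, not one less. Your candidate $g_0$ simply does not exist at such $X$; the geometric picture you invoke (a cycle of class $a[\Pi_1]+b[\Pi_2]$ persisting while the $\Pi_i$ do not) is only available when $X$ is of \emph{split type} in the sense of Section~\ref{secSplit}, and the Fermat hypersurface that motivated the conjecture happens to be split.

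For $k\in\{2,3,4\}$ the paper does establish your lower bound (Theorem~\ref{thmMovConvIntro}), but by a dimension count rather than by exhibiting a cycle: one has $d>kd-2k-2$, so $\dim(S/I_1\cap I_2)_d>\dim(S/I_1\cap I_2)_{kd-2k-2}$ forces a left kernel for every $\lambda$ (Example~\ref{exaRestriction}). Even there, neither the upper bound $\leq 1$ nor reducedness is proved; the paper concludes only that either the conjecture holds or $\NL(a[\Pi_1]+b[\Pi_2])$ is nonreduced. So your worry about the upper bound is legitimate for small $k$, but for $k\geq 5$ the proposal fails already at the lower bound, and irreparably so.
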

Movasati states that he expects  a similar result  for $d=4, c=2$ \cite[Section 19.8]{MovBook}, and that the corresponding Hodge loci for the case $d=3,c=2$ are singular \cite[Secion 19.2]{MovBook}. Movasati based his conjectures on extensive computer based  approximations of the local structure of these Hodge loci at the Fermat cubic hypersurface, and had to restrict to low values for $k$. In particular, for $k\geq 5$ he could only do approximations of low order.
We will analyse the same Hodge loci, but in the neighborhood of more general hypersurfaces. 
However, differently from Movasati's approach, we manage only to analyse the tangent space and did not succeed in higher order approximations.
%Other Conjectures from the same book 

Since a nonzero class is a Hodge class if and only if every nonzero multiple of this class is a Hodge class we may identify the locus $\NL(a[\Pi_1]+b\Pi)$ with $\NL([\Pi_1]+\frac{b}{a}[\Pi_2])$, provided that $a\neq 0.$ We prefer the latter notation, in order to simplify some of the statements.

Our first main result is the following:

%The main results of our paper are the following: We will present counterexamples to the above conjecture for every value of $k$ at least $5$ and for all but one choice of $b/a$ with $a,b$ nonzero. For this exceptional value of $b/a$ we  will prove Movasati's conjecture for the exceptional value of $a/b$, thereby completely settling Movasati's conjecture for $k\geq 5$. Since the period map is an isomorphism for $(k,d)=(2,3)$ we have that Movasati's conjecture holds trivially true for $k=2$. Hence the conjecture remains open only for $k=3,4$.
%We will provide similar results if the planes intersect in dimension $k-2$, both for $d=3, k\geq 5$ and $d=4, k\geq 3$:

\begin{theorem}\label{mainThmIntro} Suppose $d=3, c\in \{2,3\} ,k\geq 5$ or $d=4,c=2,k\geq 3$. Then there exists a smooth hypersurface $X\subset\Ps^{2k+1}$ of degree $d$ containing two $k$-planes intersecting in codimension $c$ and a finite set $S$, such that for  all $\lambda\in\Q\setminus S$ we have
\[ \codim_{T_{[X]}U_d} T_X\NL([\Pi_1]+\lambda [\Pi_2])=\codim_{T_{[X]}U_d} T_X \NL([\Pi_1],[\Pi_2]).\]
Moreover, $\NL([\Pi_1]+\lambda[\Pi_2])=\NL([\Pi_1],[\Pi_2])$ in a neighborhood of $X$.

Moreover, if $c=d=3$ then $S=\{0,1\}$ and if $d=4,c=2$ then $S=\{0,-1\}$.
\end{theorem}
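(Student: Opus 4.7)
The plan is to translate the statement into a linear-algebra assertion on the Jacobian ring of $X$ via Griffiths' infinitesimal variation of Hodge structure, and then verify the resulting rank equality for a suitable choice of $X$.

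For $X=V(f)$ smooth of degree $d$ in $\Ps^{2k+1}$, let $R=R(f)$ denote the Jacobian ring and set $N=(k+1)d-2k-2$. Griffiths' theorem identifies $H^{k,k}_{\prim}(X)\cong R_N$ and, modulo trivial deformations, $T_X U_d\cong R_d$. Writing $\alpha_i\in R_N$ for the Jacobian class of $[\Pi_i]_{\prim}$ (available via Villaflor's formula \cite{RVLTs}) and $\phi_i\colon R_d\to R_{N+d}$ for multiplication by $\alpha_i$, one has
\[\codim T_X\NL([\Pi_1]+\lambda[\Pi_2])=\rank(\phi_1+\lambda\phi_2)\]
and $\codim T_X\NL([\Pi_1],[\Pi_2])=\dim R_d-\dim(\ker\phi_1\cap\ker\phi_2)$. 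The inclusion $\ker(\phi_1+\lambda\phi_2)\supseteq\ker\phi_1\cap\ker\phi_2$ is automatic, so the theorem is equivalent to the reverse inclusion for $\lambda\in\Q\setminus S$. Upper semi-continuity of $\rank(\phi_1+\lambda\phi_2)$ in $\lambda$ shows that the set of good $\lambda$ is Zariski open; the task is to make it nonempty and to identify its finite complement with $S$.

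Next, I would fix coordinates with $\Pi_1=V(x_0,\dots,x_k)$ and $\Pi_2=V(\ell_0,\dots,\ell_k)$ for linear forms $\ell_j$ chosen so that $\dim\Pi_1\cap\Pi_2=k-c$, and take $X$ to be a general smooth hypersurface containing $\Pi_1\cup\Pi_2$ (smoothness is an open condition in each of the three numerical cases). Expanding the $\alpha_i$ via Villaflor's formula, the strategy is to produce a direct sum decomposition $R_d=(\ker\phi_1\cap\ker\phi_2)\oplus V_1\oplus V_2\oplus V_0$ with $V_i\subseteq\ker\phi_{3-i}$, after which the pair $(\phi_1,\phi_2)$ acts ``generically independently'' on the complement $V_0$. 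The additional kernel vectors of $\phi_1+\lambda\phi_2$ beyond $\ker\phi_1\cap\ker\phi_2$ then correspond to elements $g\in V_0$ with $\phi_1(g)=-\lambda\phi_2(g)\neq 0$, a condition that can hold only for finitely many $\lambda$, which should recover $S$.

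The exceptional values have a concrete geometric origin to be tracked case by case. The value $\lambda=0$ is trivially exceptional because $\NL([\Pi_1])$ ignores $\Pi_2$; for $d=c=3$ at $\lambda=1$, and $d=4,c=2$ at $\lambda=-1$, there is an additional algebraic cycle in $X$ whose class coincides with $[\Pi_1]\pm[\Pi_2]$ modulo primitive and whose Hodge locus is strictly larger, producing an extra first-order deformation. The main technical obstacle lies in the third step: verifying that the decomposition of $R_d$ above (or an equivalent direct rank computation) actually exists for general $f$. This requires controlling how multiplication by $\alpha_1,\alpha_2$ interacts with the Koszul structure of the vanishing ideals of the $\Pi_i$ inside the Gorenstein Artin ring $R$, and it is precisely here that subtle cancellations, depending on the numerical triple $(d,c,k)$, could either introduce additional elements of $S$ or block the argument entirely; if a direct decomposition seems out of reach, one can instead specialise $f$ to a particularly tractable hypersurface where the relevant multiplication maps can be computed by hand and conclude by semi-continuity for nearby $X$.
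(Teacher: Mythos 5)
Your reduction to the rank of $\phi_1+\lambda\phi_2$ on the Jacobian ring is sound and is essentially equivalent to the paper's setup (the paper works with the Artinian Gorenstein ideal $I(\gamma)$ attached to each class, and its Lemma~\ref{lemmakey} and Theorem~\ref{thmTsp} are exactly your semicontinuity-plus-openness observation made precise). But everything after that is where the theorem actually lives, and it is left open. The existence of your decomposition $R_d=(\ker\phi_1\cap\ker\phi_2)\oplus V_1\oplus V_2\oplus V_0$ with the two bilinear forms behaving ``generically independently'' on $V_0$ is equivalent to the vanishing of the left kernel of the pairing $((I_1+I_2)/I_2)_d\times((I_1+I_2)/I_2)_{kd-2k-2}\to(S/I_2)_{(k+1)(d-2)}$, and this is \emph{not} a routine genericity statement: it fails for every hypersurface of split type — in particular for the Fermat hypersurface (Proposition~\ref{prpFermat}) — and for $d=3$, $k\le 4$ it fails for \emph{every} $X$ containing the two planes (Example~\ref{exaRestriction}). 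So your fallback of ``specialise $f$ to a particularly tractable hypersurface and compute by hand'' points straight at the known bad locus; the paper has to exhibit carefully chosen two-monomial deformations of Fermat-like hypersurfaces (Examples~\ref{ExaConA}--\ref{ExaConC}) and verify nondegeneracy of an explicit $18\times 18$ Gram matrix by computer. Moreover, even granting one good example, you give no mechanism for treating all $k\ge 5$ (resp.\ $k\ge 3$): a fresh computation for each $k$ is not a proof, and the paper needs the cone construction $\tilde f=f+x_{2k+3}^d+x_{2k+3}x_{2k+2}^{d-1}$ together with Proposition~\ref{prpDimTrick} and Lemma~\ref{lemDegShift} to propagate the base case upward by induction.

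The determination of $S$ is also asserted rather than proved. To get ``at most one nonzero exceptional $\lambda$'' you need the quantitative bound (Lemma~\ref{lemSpectrumSize}) $\#\{\text{exceptional }\lambda\}\le h_{I_1+I_2}(d)=1$ when $(c-1)(d-2)=2$, and that bound requires \emph{both} one-sided kernels to vanish — arranged in the paper by choosing examples admitting an automorphism swapping $\Pi_1$ and $\Pi_2$. And to see that the one exceptional value is exactly $1$ (resp.\ $-1$) one must actually produce the extra cycle: a linear section of an iterated cone over the Pl\"ucker embedding of $\G(2,5)$ degenerating to class $[\Pi_1]+[\Pi_2]+h^k$ for $d=c=3$, and a flat degeneration of a $(2,2,1,\dots,1)$ complete intersection with limit class $[\Pi_1]-[\Pi_2]+h^k$ for $d=4$, $c=2$. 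Note also that the linear algebra alone cannot pin down the numerical value of the exceptional $\lambda$ without fixing the isomorphisms $(S/I_j)_{(k+1)(d-2)}\cong\C$ consistently (the reparametrization $\nu$ in Lemma~\ref{lemmakey}), which is another reason the paper identifies $S$ geometrically rather than by computing the spectrum of the pencil of pairings.
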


In the final part of the paper we consider the case where $\lambda$ is the nonzero element of $S$. We show that Movasati's conjecture holds if $(c,d,\lambda)=(2,4,-1)$ and $k\geq 1$ or $(c,d,\lambda)=(3,3,1)$ and $k\geq 2$, see Corollary~\ref{corMovQua} and Proposition~\ref{corMovCub}. This settles also \cite[Conjecture 18.1]{MovBook} for $d=4$.

Our approach uses tangent space considerations and brakes down for some small values of $k$, and for these values our method actually provides evidence for Movasati's conjecture:
\begin{theorem}\label{thmMovConvIntro} Suppose $d=3, c\in \{2,3\} ,k\in \{2,3,4\} $ or $d=4,c=2,k\in \{1,2\}$. Suppose $X\subset\Ps^{2k+1}$ is a smooth hypersurface of degree $d$ containing two $k$-planes intersecting in codimension $c$. Then for all $\lambda\in\Q^*$ we have
\[ \codim T_X\NL([\Pi_1]+\lambda [\Pi_2])<\codim T_X \NL([\Pi_1],[\Pi_2]).\]
Moreover, for $d=c=3$, $k\in\{2,3,4\}$ and $\lambda \in \Q^*$, either Movasati's conjecture holds or $\NL([\Pi_1]+\lambda[\Pi_2])$ is nonreduced.
\end{theorem}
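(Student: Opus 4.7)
The plan is to express everything in terms of the Jacobian ring of $X$ and to exploit the smallness of a single graded piece to force the strict inequality.

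Let $F\in S=\C[x_0,\dots,x_{2k+1}]$ be a defining polynomial of $X$ and $R=S/J_F$ the Jacobian ring, with $J_F$ the ideal of partial derivatives. By the Griffiths residue description, each primitive Hodge class $[\Pi_i]_{\prim}\in H^{k,k}_{\prim}(X)$ corresponds to an element $\tilde\gamma_i\in R_{(k+1)d-2k-2}$, and the tangent space at $X$ to the Hodge locus of a primitive class $\gamma$ is the kernel of multiplication by a representative $\tilde\gamma$ viewed as a map $R_d\to R_{(k+2)d-2k-2}$. Writing $\phi_i\colon R_d\to R_{(k+2)d-2k-2}$ for multiplication by $\tilde\gamma_i$, we obtain
\[
\codim T_X\NL([\Pi_1]+\lambda[\Pi_2])=\rank(\phi_1+\lambda\phi_2),\qquad
\codim T_X\NL([\Pi_1],[\Pi_2])=\rank\bigl((\phi_1,\phi_2)\bigr),
\]
where $(\phi_1,\phi_2)\colon R_d\to R_{(k+2)d-2k-2}^{\oplus 2}$. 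Since $\phi_1+\lambda\phi_2$ lands in a single copy of $R_{(k+2)d-2k-2}$, its rank is bounded above by $N:=\dim R_{(k+2)d-2k-2}$ for every $\lambda$, so it is enough to show
\[
\rank(\phi_1,\phi_2)>N \qquad(\star)
\]
for every smooth $X$ in each of the listed cases.

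The number $N$ depends only on $d$ and $k$, since the Hilbert function of the Jacobian ring of a smooth degree-$d$ hypersurface in $\Ps^{2k+1}$ depends only on these data; a direct tabulation shows that $N$ is small for the listed values of $k$, because the degree $(k+2)d-2k-2$ sits well below the socle degree $(d-2)(2k+2)$. To lower-bound $\rank(\phi_1,\phi_2)$ I would use Villaflor's explicit formula for $\tilde\gamma_i$ and put $\Pi_1,\Pi_2$ in linear normal form; the generic value is $\rank(\phi_1,\phi_2)=\rank\phi_1+\rank\phi_2$, and in each of the listed triples a direct computation shows this generic value comfortably exceeds $N$.

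The main obstacle is upgrading $(\star)$ from a generic $X$ to every admissible one: $\rank(\phi_1,\phi_2)$ can only drop on closed subloci, for instance at hypersurfaces where the representatives $\tilde\gamma_i$ become ``aligned'' in $R$. The natural remedy is a lower-semicontinuity argument on the parameter space of triples $(X,\Pi_1,\Pi_2)$ satisfying the hypotheses: one shows this parameter space is irreducible (or identifies its components explicitly) and verifies $(\star)$ at a single convenient model in each component, typically a Fermat-type hypersurface in which $R$ has a standard monomial basis and $\phi_1,\phi_2$ admit explicit matrix descriptions.

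For the ``in particular'' statement, recall $\codim T_X\NL\leq\codim\NL$ with equality when $\NL$ is smooth (and in particular reduced) at $X$. If $\NL([\Pi_1]+\lambda[\Pi_2])$ is reduced at $X$, then $\codim\NL([\Pi_1]+\lambda[\Pi_2])=\codim T_X\NL([\Pi_1]+\lambda[\Pi_2])$, so the strict tangent-space inequality combined with the trivial inclusion $\NL([\Pi_1],[\Pi_2])\subseteq\NL([\Pi_1]+\lambda[\Pi_2])$ yields the codimension drop by one predicted by Movasati's conjecture.
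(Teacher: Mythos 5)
Your reduction of the two codimensions to $\rank(\phi_1+\lambda\phi_2)$ and $\rank(\phi_1,\phi_2)$ is correct, but the pivotal inequality $(\star)$, namely $\rank(\phi_1,\phi_2)>N$ with $N=\dim R_{(k+2)d-2k-2}$, is \emph{false} in most of the listed cases, so the strategy collapses. Concretely, for $d=3$, $c=3$, $k=3$ (cubic sixfolds in $\Ps^7$) the Jacobian ring has socle degree $8$, so $N=\dim R_7=\dim R_1=8$, while $\rank(\phi_1,\phi_2)=\codim T_X\NL([\Pi_1],[\Pi_2])=4+4-0=8$; for $d=3$, $k=4$ one finds $N=\dim R_8=\dim R_2=45$ against $\rank(\phi_1,\phi_2)=20$ (for $c=3$; it is $19$ for $c=2$); for $d=4$, $k=2$ one finds $N=21$ against $12$. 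Only when $N=1$ (i.e.\ $d=3,k=2$ and $d=4,k=1$) does $(\star)$ hold, so your claims that ``$N$ is small'' and that the generic rank ``comfortably exceeds $N$'' do not survive the tabulation you propose. The bound $\rank(\phi_1+\lambda\phi_2)\leq N$ is simply too crude, because the image of multiplication by $\tilde\gamma_1+\lambda\tilde\gamma_2$ is a very small subspace of $R_{(k+2)d-2k-2}$. A secondary issue: your fallback of checking a Fermat-type model and spreading out by semicontinuity points the wrong way for a statement about \emph{every} admissible $X$, since $\rank(\phi_1,\phi_2)$ is lower semicontinuous and could a priori drop at special points (it is in fact constant, by the smoothness of $\NL([\Pi_1],[\Pi_2])$, but that does not rescue $(\star)$). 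Also, in the ``in particular'' clause you conflate reduced with smooth: reducedness only gives $\codim\NL=\codim T_{X'}\NL$ at a \emph{general} point $X'$ of each component, not at the given $X$.

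The paper's argument fixes exactly this by replacing the Jacobian ideal with the much larger Artinian Gorenstein ideals $I_1,I_2$ associated with the two classes (Construction~\ref{conIdeal}), whose quotients have socle degree $(k+1)(d-2)$ rather than $(2k+2)(d-2)$. Lemma~\ref{lemmakey} identifies the excess $T_X\NL([\Pi_1]+\lambda[\Pi_2])/T_X\NL([\Pi_1],[\Pi_2])$ with the left kernel of a pairing
\[ (S/I_1\cap I_2)_d\times (S/I_1\cap I_2)_{kd-2k-2}\longrightarrow\C, \]
where $\dim(S/I_1\cap I_2)_d=\rank(\phi_1,\phi_2)$. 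The listed triples $(d,c,k)$ are precisely those with $d>kd-2k-2$, and then (Example~\ref{exaRestriction}) $\dim(S/I_1\cap I_2)_d>\dim(S/I_1\cap I_2)_{kd-2k-2}$, so the left kernel is nonzero for every $X$ and every $\lambda$, giving the strict inequality. In other words, the correct ``small space'' to compare against is $(S/I_1\cap I_2)_{kd-2k-2}$, whose dimension is computed from the complete intersection structure of $I_1$, $I_2$ and $I_1+I_2$ --- not $R_{(k+2)d-2k-2}$. To repair your write-up you would need to introduce these ideals and the induced socle pairing, which amounts to reproducing the paper's proof.
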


We would like to point out that Movasati's calculations for the Fermat hypersurface are mostly within the range of this theorem. As is noted in \cite[Section 5.10]{MovConb}, for $(d,c)=(3,3)$ there is a difference between the dimension of the tangent spaces to $\NL([\Pi_1],[\Pi_2])$ and $\NL([\Pi_1]+\lambda[\Pi_2])$ at the Fermat hypersurface for any $k\geq 2$. We will give the following explanation for this:
In the sequel we give a somewhat ad-hoc definition of split hypersurface of codimension $c$. One easily checks that the Fermat hypersurface is split of codimension $c$.  %The following result explains also the  difference between the dimension of the tangent space of $\NL([\Pi_1]+\lambda[\Pi_2])$ at the Fermat point (as calculated by Villaflor) and $\NL([\Pi_1]+\lambda[\Pi_2])$ itself:
\begin{theorem}\label{thmSplitIntro} Let $(c,d)\in \{(2,3),(3,3),(2,4)\}$. Let $k\geq c-1$ be an integer.
Let $X\subset \Ps^{2k+1}$ be a smooth hypersurface of degree $d$ containing two linear spaces  $\Pi_1,\Pi_2$ of dimension $k$, intersecting in dimension $k-c$ and of split type of codimension $c$. 
 Let $\lambda\in \Q^*$.  
Then $\NL([\Pi_1],[\Pi_2])_{\red}\subsetneq \NL([\Pi_1]+\lambda[\Pi_2])_{\red}$ in a neighbourhood of $X$.
Moreover, if $d=3$ and $k\geq 5$ or $d=4$ and $k\geq 3$ then $\NL([\Pi_1]+\lambda[\Pi_2])$ is reducible at $X$.
\end{theorem}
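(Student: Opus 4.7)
The plan is to use the split-type hypothesis to produce, for each $\lambda\in\Q^*$, a third $k$-plane $\Pi_3\subset X$ whose cohomology class is a nonzero rational multiple of $[\Pi_1]+\lambda[\Pi_2]$. The theorem then follows from comparing $\NL([\Pi_1],[\Pi_3])$ with $\NL([\Pi_1],[\Pi_2])$ inside $\NL([\Pi_1]+\lambda[\Pi_2])$.

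First I would unpack the ad-hoc definition of \emph{split of codimension $c$}. The definition is modelled on the Fermat hypersurface, where the linear span $\langle \Pi_1,\Pi_2\rangle\cong\Ps^{k+c}$ cuts out a very degenerate degree-$d$ hypersurface containing a natural algebraic pencil $\{\Pi_t\}_{t\in \Ps^1}$ of $k$-planes through $\Pi_1\cap\Pi_2$, with $\Pi_0=\Pi_1$ and $\Pi_\infty=\Pi_2$; the split hypothesis should be engineered precisely to preserve this pencil on $X$. Given the pencil, I would compute the classes $[\Pi_t]\in H^{2k}(X,\Q)$ by deformation along the family. The key cohomological step is to show that $t\mapsto [\Pi_t]$ lands in the rational line $\Q[\Pi_1]+\Q[\Pi_2]$, using the residue/adjunction description of the fundamental class of a $k$-plane on a hypersurface together with the fact that all $\Pi_t$ lie in $\langle \Pi_1,\Pi_2\rangle$. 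Once this is established, for every $\lambda\in\Q^*$ there is an explicit $t_\lambda\in \Ps^1(\Q)$ with $[\Pi_{t_\lambda}]\in \Q^*\cdot([\Pi_1]+\lambda[\Pi_2])$, and I set $\Pi_3:=\Pi_{t_\lambda}$.

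With $\Pi_3$ in hand, the inclusion $\NL([\Pi_1],[\Pi_3])_{\red}\subset\NL([\Pi_1]+\lambda[\Pi_2])_{\red}$ is immediate. For the strict inclusion $\NL([\Pi_1],[\Pi_2])_{\red}\subsetneq \NL([\Pi_1]+\lambda[\Pi_2])_{\red}$ I would exhibit a first-order deformation of $X$ that preserves $\Pi_1$ and $\Pi_3$ but destroys $\Pi_2$; this can be carried out via the standard Jacobian-ring description of $T_X\NL([\Pi])$ by choosing a deformation direction in the appropriate quotient of Jacobian ideals. For the reducibility assertion in the stable range $d=3,k\geq 5$ or $d=4,k\geq 3$, Theorem~\ref{mainThmIntro} implies that $\codim T_X\NL([\Pi_1]+\lambda[\Pi_2])=\codim T_X\NL([\Pi_1],[\Pi_2])$ for $\lambda$ outside the finite set $S$, and the same identity applies to $\NL([\Pi_1],[\Pi_3])$ by the symmetric role of $\Pi_2$ and $\Pi_3$; hence both $\NL([\Pi_1],[\Pi_2])_{\red}$ and $\NL([\Pi_1],[\Pi_3])_{\red}$ realise the maximal codimension inside $\NL([\Pi_1]+\lambda[\Pi_2])$ and therefore each contain an irreducible component of the latter through $X$, and their distinctness yields reducibility. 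The remaining values $\lambda\in S\cap\Q^*$ are handled by the separate analyses announced in Corollary~\ref{corMovQua} and Proposition~\ref{corMovCub}.

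The main obstacle will be the cohomological computation of $[\Pi_t]$: one must show that as $t$ varies the class moves linearly in the two distinguished classes, with no $t$-dependent contribution from $H^{2k}(X,\Q)_{\prim}$, and that the coefficients are rational so that $t_\lambda\in \Ps^1(\Q)$. This reduces to an explicit calculation based on the split decomposition, but requires care because a priori a primitive correction term could spoil both the proportionality and the rationality.
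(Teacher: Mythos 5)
The central device of your proposal --- an algebraic pencil $\{\Pi_t\}_{t\in\Ps^1}$ of $k$-planes in $X$ through $\Pi_1\cap\Pi_2$ with $\Pi_0=\Pi_1$, $\Pi_\infty=\Pi_2$ --- does not exist, and the split hypothesis does not provide it. Any $k$-plane containing $\Pi_1\cap\Pi_2$ and lying in the span $\langle\Pi_1,\Pi_2\rangle\cong\Ps^{k+c}$ corresponds, via the cone structure of $f|_{\langle\Pi_1,\Pi_2\rangle}=\sum x_ix_jQ_{ij}$, to a $(c-1)$-plane in the auxiliary hypersurface $\Sigma=V(\sum x_ix_jQ_{ij})\subset\Ps^{2c-1}$; for $(c,d)=(2,4)$ this is a line in a smooth quartic K3 surface, for $(c,d)=(3,3)$ a plane in a smooth cubic fourfold, and for $(c,d)=(2,3)$ one of the $27$ lines on a cubic surface --- in every case a finite set, never a pencil. (Already for the Fermat hypersurface the $k$-planes form a finite configuration; there is no one-parameter family interpolating between two of them.) Since $\lambda$ ranges over all of $\Q^*$, your construction would require infinitely many $k$-planes $\Pi_3\subset X$ with $[\Pi_3]\in\Q[\Pi_1]+\Q[\Pi_2]$, which is impossible. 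Everything downstream --- the choice of $\Pi_3:=\Pi_{t_\lambda}$, the comparison of $\NL([\Pi_1],[\Pi_3])$ with $\NL([\Pi_1],[\Pi_2])$, and the reducibility argument via two distinct components --- collapses with it. A secondary issue: even granting $\Pi_3$, a \emph{first-order} deformation preserving $\Pi_1,\Pi_3$ but ``destroying'' $\Pi_2$ only gives information about tangent spaces; to separate the \emph{reduced} loci you need actual points of $\NL([\Pi_1]+\lambda[\Pi_2])$ outside $\NL([\Pi_1],[\Pi_2])$.

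What the split condition actually buys, and what the paper's proof uses, is that the Noether--Lefschetz problem descends to $\Sigma$ itself, where Hodge theory is completely understood: by the (surjectivity of the) period map for quartic K3 surfaces, resp.\ cubic fourfolds, the locus $\NL(\ell_1+\lambda\ell_2)$ is a genuine divisor in the deformation space of $\Sigma$ properly containing $\NL(\ell_1,\ell_2)$. One then chooses a curve $B\subset\NL(\ell_1+\lambda\ell_2)$ meeting $\NL(\ell_1,\ell_2)$ only at $[\Sigma]$ and transversally; on the nearby deformations $\Sigma_t$ the class $\ell_1+\lambda\ell_2$ (after adding a large multiple of the hyperplane class) is represented by an \emph{effective cycle} $C_t$ --- not a linear space --- and coning $C_t$ up and cutting with the $P_j$'s produces hypersurfaces $X_t\in\NL([\Pi_1]+\lambda[\Pi_2])\setminus\NL([\Pi_1],[\Pi_2])$ converging to $X$. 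So the deformation is of the ambient hypersurface, not a family of planes inside the fixed $X$. The reducibility step then combines this strict inclusion with Theorem~\ref{mainThmIntro} exactly as you indicate in your last paragraph; that part of your outline is sound, but it cannot be reached without repairing the construction of points in $\NL([\Pi_1]+\lambda[\Pi_2])$ away from $\NL([\Pi_1],[\Pi_2])$.
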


We will now give some details on our construction of the counterexamples.  
The first result is a result on the dimension of the Hodge locus of a cycle which is a combination of two  complete intersection cycles:
\begin{theorem}\label{thmTanSpIntro} Let $c,k,d_0,\dots, d_k, e_0,\dots,e_{c-1}$ be integers.
Let $Z\subset \Ps^{2k+1}$ be a complete intersection of multidegree $(d_0,\dots,d_k,e_0,\dots,e_{c-1})$, let $Y_1$ and $Y_2$ be complete intersections of multidegree $(d_0,\dots, d_k)$ and $(e_0,\dots,e_{c-1},d_c,\dots,d_k)$ respectively, such that their intersection equals $Z$. 
 If  
 \[ \sum_{i=0}^{c-1} (e_i+d_i)<(c-1)d\]
 then for all but finitely many $\lambda\in\Q^*$ we have
 \[ \codim T_X\NL([Y_1]+\lambda [Y_2])=\codim T_X \NL([Y_1],[Y_2]).\]
Moreover, $\NL([Y_1]+\lambda [Y_2])=\NL([Y_1])\cap \NL([Y_2])$.
\end{theorem}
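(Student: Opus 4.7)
The plan has three stages: translate via the Jacobian ring, apply upper semicontinuity, and invoke the degree hypothesis.

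\emph{Jacobian ring translation.} Let $R = R(f)$ denote the Jacobian ring of $X = V(f)$, set $\sigma := (k+1)d - 2k - 2$, and recall Griffiths' identification of $H^{k,k}(X)_{\prim}$ with $R_\sigma$. The tangent space to a Hodge locus is
\[ T_X \NL(\gamma) = \ker\bigl(\mu_\omega \colon R_d \to R_{\sigma+d}\bigr), \]
where $\omega \in R_\sigma$ represents $\gamma$. Applying Villaflor's formula \cite{RVLTs} to the Koszul decompositions $f = \sum_{i=0}^k h_i^{(1)} f_i$ and $f = \sum_{i=0}^{c-1} p_i e_i + \sum_{i=c}^k h_i^{(2)} f_i$ produces explicit representatives $\omega_1, \omega_2 \in R_\sigma$ of $[Y_1]_{\prim}$ and $[Y_2]_{\prim}$. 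Under this dictionary, the inclusion $\ker\mu_{\omega_1} \cap \ker\mu_{\omega_2} \subseteq \ker\mu_{\omega_1+\lambda\omega_2}$ is automatic, and the content of the theorem is the reverse inclusion for all but finitely many $\lambda$.

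\emph{Semicontinuity reduction.} Since $\lambda \mapsto \dim\ker\mu_{\omega_1+\lambda\omega_2}$ is upper semicontinuous on $\C$, the set of $\lambda$ for which this dimension strictly exceeds $\dim(\ker\mu_{\omega_1} \cap \ker\mu_{\omega_2})$ is Zariski closed, hence either finite or all of $\C$; the theorem therefore follows once a single $\lambda_0 \in \C$ is produced for which equality holds. Equivalently, writing $\Phi \colon R_d \to R_{\sigma+d}\oplus R_{\sigma+d}$ for the map $g \mapsto (g\omega_1, g\omega_2)$ and $W := \Phi(R_d)$, it suffices to exhibit $\lambda_0$ such that $W \cap \{(a,b) : a + \lambda_0 b = 0\} = 0$.

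\emph{Exploiting the degree hypothesis.} Villaflor's formula expresses both $\omega_1$ and $\omega_2$ as a product of a common factor, arising from the shared cofactors indexed by $c \leq i \leq k$, and a non-shared factor built from the cofactors indexed by $0 \leq i \leq c-1$. The inequality $\sum_{i=0}^{c-1}(e_i+d_i) < (c-1)d$ gives a precise bound on the degrees of these non-shared generators, which I would exploit through a Gorenstein-duality argument on $R$ (socle in degree $2\sigma$) to convert the desired transversality into a vanishing statement in a specific degree for a colon ideal such as $(f_0,\dots,f_{c-1}):(e_0,\dots,e_{c-1})$ inside the Jacobian quotient. This Koszul/Macaulay-regularity step is the technical heart of the proof and the main obstacle; once it is in hand, the semicontinuity reduction immediately yields the theorem, and the final equality $\NL([Y_1]+\lambda[Y_2]) = \NL([Y_1]) \cap \NL([Y_2])$ in a neighbourhood of $X$ follows from the matching tangent spaces together with the trivial containment of Hodge loci.
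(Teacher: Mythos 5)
Your framework is sound and broadly parallels the paper's: the tangent spaces are kernels of multiplication maps in a graded Artinian Gorenstein setting, the containment $T_X\NL([Y_1],[Y_2])\subseteq T_X\NL([Y_1]+\lambda[Y_2])$ is automatic, and generic equality in $\lambda$ would follow from semicontinuity once a single good $\lambda_0$ is exhibited. But the step you yourself label ``the technical heart \dots\ and the main obstacle'' is the entire content of the theorem, and you have not supplied it; you have restated the required transversality as a colon-ideal vanishing in the Jacobian ring without showing how the inequality $\sum_{i=0}^{c-1}(e_i+d_i)<(c-1)d$ produces it. Note also that the semicontinuity reduction cannot be made to work without genuinely using this inequality: the paper shows (Example~\ref{exaRestriction}) that when the analogous inequality fails (e.g.\ $k$-planes with $(c-1)(d-2)\le 2$ and $k$ small) the kernel is strictly larger than $\ker\mu_{\omega_1}\cap\ker\mu_{\omega_2}$ for \emph{every} $\lambda$, so no $\lambda_0$ exists. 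Hence the missing step is not a routine verification.

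The idea that closes the gap in the paper is to work not in the Jacobian ring but with the Artinian Gorenstein algebras $S/I_1$, $S/I_2$ attached to the two classes (Construction~\ref{conIdeal}, socle degree $(k+1)(d-2)$), and to observe from the complete-intersection structure (Example~\ref{exaIdealCI} and Subsection~\ref{subsecCII}) that $I_1+I_2$ is itself a complete intersection with generator degrees $(e_0,\dots,e_{c-1},d_0,\dots,d_{c-1},d_c,\dots,d_k,d-d_c,\dots,d-d_k)$. Its socle degree is therefore $(k-c+1)d-2k-2+\sum_{i=0}^{c-1}(e_i+d_i)$, and your hypothesis is exactly the statement $h_{I_1+I_2}(kd-2k-2)=0$. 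Lemma~\ref{lemHighDeg} then gives $\ker_L(\varphi_1+\lambda\varphi_2)=0$ for all $\lambda\in\C^*$ at once, which via Lemma~\ref{lemmakey} and Theorem~\ref{thmTsp} yields the tangent-space equality (in fact for every nonzero $\lambda$, with no semicontinuity argument needed); the scheme-theoretic conclusion then uses that $\NL([Y_1],[Y_2])$ is smooth of the expected dimension for complete intersection cycles. To complete your proposal you would need to prove an equivalent vanishing statement in degree $kd-2k-2$; as written, the plan identifies where the difficulty sits but does not resolve it.
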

In the case where $Y_1$ and $Y_2$ are linear subvarieties the inequality in the above Theorem simplifies to 
$(c-1)(d-2)>2$.
Hence the above result is a generalization of Villaflor's result \cite[Theorem 1.3]{RVLTs}.

To study the case $(c-1)(d-2)\leq 2$ we use the standard construction of an Artinian Gorenstein algebra associated with a Hodge class (Section~\ref{secIdealCon}). In the case of a complete intersection cycle $g_0=\dots=g_k=0$ we have that $f=\sum_{i=0}^k g_ih_i$ and the associated ideal is $\langle g_0,\dots,g_k,h_0,\dots,h_k\rangle $. The quotient ring $S/I$ is a graded Artinian Gorenstein algebra of  socle degree $(d-2)(k+1)$.

\begin{theorem}\label{thmPairingIntro}
Suppose $X\subset \Ps^{2k+1}$ is a smooth hypersurface of degree $d$ such that $X$ contains two subvarieties $Y_1,Y_2$ of dimension $k$. Suppose that $[Y_1]_{\prim}$ and $[Y_2]_{\prim}$ are linearly independent in $H^{2k}(X,\Q)_{\prim}$.

Let $I_i$ be the ideal associated with the Hodge class $[Y_i]$, cf. Construction~\ref{conIdeal}.
Suppose that the left kernel of the multiplication map
\[ (I_1+I_2/I_2)_d\times (I_1+I_2/I_2)_{kd-2k-2}\to (S/I_2)_{(k+1)(d-2)}\]
is zero. Then for all but finitely many $\lambda \in\Q^*$  we have
\[ \codim T_X\NL([Y_1]+\lambda [Y_2])=\codim T_X \NL([Y_1],[Y_2])\]
\end{theorem}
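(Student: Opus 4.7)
The plan is to translate the codimension equality into an equality of degree-$d$ components of ideals in the Jacobian ring $R = S/J(f)$, and then to prove the latter by inducting on the $\lambda$-degree in a suitable $\C[\lambda]$-module, using the hypothesis at the base of the induction.

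First I would use the standard Griffiths description of $T_X U_d$, together with Construction~\ref{conIdeal}: for a primitive Hodge class represented by $\gamma \in R_{(k+1)(d-2)}$, one has $T_X\NL(\gamma) = (I_\gamma)_d/J(f)_d$, realised as the kernel of multiplication by $\gamma$ from $R_d$ to $R_{(k+2)(d-2)+2}$. Writing $\gamma_i := [Y_i]_{\prim}$, since the inclusion $I_1 \cap I_2 \subset I_{\gamma_1+\lambda\gamma_2}$ always holds, the desired equality of codimensions reduces to the assertion $(I_{\gamma_1+\lambda\gamma_2})_d = (I_1 \cap I_2)_d$ for all but finitely many $\lambda$.

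Next I would introduce the pencil $F_\lambda\colon R_d \to R_{(k+2)(d-2)+2}$, $g\mapsto g(\gamma_1+\lambda\gamma_2)$, so that $\ker F_\lambda = (I_{\gamma_1+\lambda\gamma_2})_d/J(f)_d$. Extending scalars to $\C[\lambda]$, let $K \subset R_d \otimes_\C \C[\lambda]$ be the kernel of the resulting $\C[\lambda]$-linear map. By upper semicontinuity of kernel dimension, showing $K = (I_1\cap I_2)_d \otimes_\C \C[\lambda]$ suffices: on a Zariski-dense open of $\mathbf{P}^1$ the generic value of $\dim_\C \ker F_\lambda$ then equals $\dim (I_1\cap I_2)_d/J(f)_d$, and the exceptional locus is automatically finite.

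The main step is induction on the $\lambda$-degree of $g(\lambda) = \sum_{i=0}^n g_i \lambda^i \in K$. Comparing coefficients of $\lambda^i$ in $g(\lambda)(\gamma_1+\lambda\gamma_2)=0$ yields $g_i\gamma_1 + g_{i-1}\gamma_2 = 0$ in $R_{(k+2)(d-2)+2}$ for each $i$ (with $g_{-1} = g_{n+1} = 0$). The case $i=0$ gives $g_0 \in (I_1)_d$. For $i=1$, pair the identity $g_1\gamma_1 + g_0\gamma_2 = 0$ against $h \in (I_1)_{k(d-2)-2}$ via Macaulay duality for $R$ (socle degree $2(k+1)(d-2)$): since $hg_1 \in I_1$, the term $hg_1\gamma_1$ vanishes in the socle of $R$, leaving $hg_0\gamma_2 = 0$, i.e.\ $hg_0 \in (I_2)_{(k+1)(d-2)}$. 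Equivalently, the class $\bar g_0 \in V_1 := ((I_1+I_2)/I_2)_d$ annihilates $V_1' := ((I_1+I_2)/I_2)_{k(d-2)-2}$ under multiplication into $(S/I_2)_{(k+1)(d-2)}$, so the hypothesis forces $\bar g_0 = 0$, whence $g_0 \in (I_1\cap I_2)_d$. Consequently $(g(\lambda) - g_0)(\gamma_1+\lambda\gamma_2)=0$ and torsion-freeness of $R_{(k+2)(d-2)+2}[\lambda]$ permits division by $\lambda$, placing $\sum_{j=0}^{n-1} g_{j+1}\lambda^j$ in $K$; the inductive hypothesis finishes the proof.

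The delicate point, and the main obstacle I expect, is recognising that the hypothesis is tailored exactly to the $i=1$ step: once $g_0 \in (I_1)_d$ has been extracted from the $i=0$ equation, the only test elements that kill the unwanted $g_1\gamma_1$ summand are those with $h \in (I_1)_{k(d-2)-2}$, and the surviving condition on $g_0$ is precisely the left-nondegeneracy of the pairing $V_1 \times V_1' \to (S/I_2)_{(k+1)(d-2)}$. Once this matching is observed, the remaining ingredients (Macaulay duality for $R$, torsion-freeness of $R_{(k+2)(d-2)+2}[\lambda]$, and the upper-semicontinuity reduction to the $\C[\lambda]$-kernel) are routine.
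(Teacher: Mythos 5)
Your proof is correct, but it takes a genuinely different route from the paper's. The paper first proves Lemma~\ref{lemmakey}, identifying $T_X\NL([Y_1]+\lambda[Y_2])/T_X\NL([Y_1],[Y_2])$ with the left kernel of a pencil $\psi_1+\nu(\lambda)\psi_2$ of $\C$-valued bilinear forms on $(S/I_1\cap I_2)_d\times (S/I_1\cap I_2)_{kd-2k-2}$ (with an injective reparametrization $\nu$), and then invokes the rank estimate of Lemma~\ref{lemgenrk}, proved by splitting off rank-one orthogonal summands: the hypothesis says exactly that $s_1=\dim V-r_1$, so the generic member of the pencil has full rank and hence trivial left kernel. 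You instead keep the whole pencil as a single $\C[\lambda]$-linear map $g\mapsto g(f_{\gamma_1}+\lambda f_{\gamma_2})$ on the Jacobian ring and induct on the $\lambda$-degree of a kernel element; the hypothesis enters once, at the coefficient-of-$\lambda$ step, to force the constant term $g_0$ into $I_1\cap I_2$, after which division by $\lambda$ (torsion-freeness over $\C[\lambda]$) closes the induction. Both arguments rest on the same translation of tangent spaces into annihilators in the Jacobian ring, and your identification of $\bar g_0$ as an element of the left kernel of $((I_1+I_2)/I_2)_d\times((I_1+I_2)/I_2)_{kd-2k-2}\to (S/I_2)_{(k+1)(d-2)}$ is exactly the intended use of the hypothesis. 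What the paper's route buys in addition is the quantitative control of Lemma~\ref{lemSpectrumSize}, which yields the bound $h_{I_1+I_2}(d)$ on the number of exceptional $\lambda$ used in the ``Moreover'' part of Theorem~\ref{thmTsp} and in Proposition~\ref{prpIndBase}; your route is more self-contained (no Gram matrices, no injectivity argument for $\nu$) but, as stated, only gives finiteness of the exceptional set.
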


Consider now the case where $Y_1$ and $Y_2$ are $k$-planes such that $c=k-\dim Y_1\cap Y_2$.
We will show that if $(d-2)(c-1)>2$ holds then we cannot have a left kernel  for dimension reasons. On the other hand, 
if $d=3$, $c\in \{2,3\}$  and $k\in\{2,3,4\}$ hold then for dimension reasons we will always have a left kernel, and in fact we can show that the equality does not hold (see Example~\ref{exaRestriction}), hence either $\NL([Y_1]+\lambda[Y_2])$ is nonreduced or Movasati's conjecture holds. If $d=4, c=2$ and $k\in \{1,2\}$ then we are in a similar situation. This is the key observation in the proof of Theorem~\ref{thmMovConvIntro}.

However, for the same choices of $(c,d)$, but for larger $k$, we have that the dimension of the kernel depends on $X$, e.g.,  if $X$ is the Fermat hypersurface then we know that the conclusion of the above theorem does not hold, but on a Zariski open subset of $\NL([Y_1],[Y_2])$ we do not have a left kernel and therefore  $\NL([Y_1],[Y_2])=\NL(a[Y_1]+b[Y_2])$ if $X$ is general on $\NL([\Pi_1],[\Pi_2])$. The increase of the dimension of the tangent at the Fermat point can then be explained by the fact that $\NL(a[Y_1]+b[Y_2])$ is reducible, see Theorem~\ref{thmSplit}.

The organization of this paper is as follows:

In Section~\ref{secPrelim} we recall some results on Artinian Gorenstein algebras and bilinear maps used in the sequel. In Section~\ref{secIdealCon} we discuss the construction of an Artinian Gorestein algebra associated to a Hodge class on a hypersurface in projective space and we introduce the notation of excess tangent dimension. Moreover, we give a criterion to check whether there is excess tangent dimension (cf. Theorem~\ref{thmPairingIntro}). In Section~\ref{secMov} we construct the counterexamples to Movasati's conjecture. In Section~\ref{secSplit} we discuss the split case and give an explanation for the calculations obtained by Villaflor~\cite{RVLTs} and Movasati~\cite{MovBook}. In Section~\ref{secExaPos} we give a proof for Movasati's conjecture for the single value of $\lambda$ where our counterexample breaks down.
In Appendix~\ref{secCode} we give the computerscripts we used in our computation.
\section{Preliminaries}\label{secPrelim}

In this section let $S=\C[x_0,\dots,x_n]$ be the polynomial ring in $n+1$ variables, which we consider as a graded ring, with its usual grading.

\subsection{Artinian Gorenstein algebras}
In the sequel we rely many times on the following construction of Artinian Gorenstein algebras:
\begin{lemma}\label{lemGorensteinConstr}

Let $t$ be a positive integer.
Let $W\subset S_t$ be a linear subspace. Let $I\subset S$ be the largest ideal generated in degree $\leq t$ such that $I_t=W$. 

Suppose $W$ is base point free.
Then there exists a $\codim_{S_t} W-1$ family of graded Artinian Gorenstein algebras of socle degree $t$, each of which is a quotient of $S/I$.
Moreover, if $\codim_{S_t}W=1$ then $S/I$ is a graded Aritinian Gorenstein algebra with socle degree $t$.
\end{lemma}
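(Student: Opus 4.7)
The plan is to reduce the general case to that of $\codim_{S_t} W = 1$, where the Gorenstein property of $S/I$ is forced by the maximality condition defining $I$.

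In the codimension one case, I would first unpack the maximality of $I$ to get the explicit description $I_k = \{f \in S_k : f\cdot S_{t-k}\subseteq W\}$ for $0 \leq k \leq t$ and $I_k = S_k$ for $k > t$. Since $W$ is base point free, the projective Nullstellensatz shows that $\langle W\rangle$ contains $S_{\geq N}$ for some $N$, so $S/\langle W\rangle$, and hence its quotient $S/I$, is finite dimensional. For $k < t$, if $a\in S_k$ satisfies $\ell a\in I_{k+1}$ for every $\ell\in S_1$, then using $S_1\cdot S_{t-k-1}=S_{t-k}$ one deduces $a\cdot S_{t-k}\subseteq W$, i.e., $a\in I_k$; so the socle vanishes in degrees below $t$. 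On the other hand $(S/I)_t=S_t/W$ is automatically annihilated by the maximal ideal (since $(S/I)_{>t}=0$), and is one-dimensional by hypothesis. So it is the whole socle, and $S/I$ is Artinian Gorenstein of socle degree $t$.

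For general $W$ of codimension $c:=\codim_{S_t}W$, I would consider hyperplanes $W'\subset S_t$ containing $W$. Each such $W'$ is still base point free (its zero locus is contained in $V(W)=\emptyset$), and if $I'$ denotes the largest ideal with $I'_t=W'$, the inclusion $W\subseteq W'$ immediately gives $I\subseteq I'$ through the explicit description above. By the codimension one case, $S/I'$ is then an Artinian Gorenstein quotient of $S/I$ of socle degree $t$. The hyperplanes of $S_t$ containing $W$ correspond to hyperplanes of the quotient $S_t/W$, and are thus parametrized by $\Ps((S_t/W)^*)\cong\Ps^{c-1}$, yielding the asserted $(c-1)$-dimensional family.

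The only delicate point is the vanishing of the socle in degrees strictly below $t$ in the first step; this is really just the maximality condition defining $I$ combined with the elementary identity $S_1\cdot S_{t-k-1}=S_{t-k}$. The rest of the argument is bookkeeping with the parametrizing projective space.
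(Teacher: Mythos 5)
Your proof is correct and follows essentially the same route as the paper: maximality of $I$ rules out socle elements below degree $t$ (the paper phrases this as a contradiction with enlarging $I$, you phrase it via the explicit description $I_k=\{f: fS_{t-k}\subseteq W\}$, which amounts to the same thing), and the $(c-1)$-dimensional family comes from hyperplanes of $S_t$ containing $W$, exactly as in the paper's "lift of any codimension one subspace of $(S/I)_t$". The only cosmetic difference is that you get Artinianness from the Nullstellensatz (or, even more directly, from $I\supseteq S_{\geq t+1}$), where the paper cites a result of Green.
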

\begin{proof}
We start by proving the Moreover-part:
If $\codim_{S_t} W=1$ then $S/I$ is an Artinian Gorenstein algebra: Let $e\leq t-1$  and $f\in S_e$  such that $f(S/I)_{t-e} $ is zero in $(S/I)_t$. Let $I'=I+\langle f\rangle$ then  $I'_t=I_t=W$ and $I\subset I'$. Hence our assumption on $I$ implies $I=I'$ and hence that $f\in I'$. Therefore the pairing $(S/I)_e\times (S/I)_{t-e}\to (S/I)_t$ is perfect.
Moreover, since $W$ is base point free we have by, e.g., \cite[Page 297]{GreenF} that $(S/I)_{t+1}=0$, and therefore $I_k=S_k$ for every $k\geq t+1$. In particular, $S/I$ has finite vector space dimension and therefore Krull dimension zero.

If $\codim_{S_t} W>1$ then we can apply this construction to the inverse imagine in $S_t$ of any codimension one subspace of $(S/I)_t$.
\end{proof}

\begin{example}\label{exaCI} Fix positive integers $d_0,\dots,d_n$. For $i=0,\dots,n$ let $f_i\in S_{d_i}$, such  that $f_0,\dots,f_n$ is a complete intersection.
Let $I=\langle f_0,\dots f_n \rangle$. Let $t=\sum_{i=0}^n d_i-n-1$. Then $\dim (S/I)_t=1$ and $\dim (S/I)_{t+1}=0$. This is actually a Artinian Gorenstein ring by a result of Macaulay. \cite[Corollary 21.19]{EisCA}
\end{example}

Let $I$ be a homogeneous ideal in $S$. Then we denote with $h_I:\Z \to \Z$ the Hilbert function of this ideal, i.e., $h_I(\alpha)=\dim_\C (S/I)_{\alpha}$.

Recall that for homogeneous ideals $I,J$ in a graded ring we have for every $\alpha\in \Z$ tthe following exact sequence of $\C$-vector spaces:
\[ 0 \to (I\cap J)_\alpha \to I_\alpha\oplus J_\alpha \to (I+J)_{\alpha}\to 0.\]
The following lemma follows immediately from the existence of this sequence:
\begin{lemma}\label{lemHP}
Let $I,J\subset S$ be homogeneous ideals such that  for $e$ sufficiently large we have that $(I+J)_e$ is base point free. Then for every $\alpha\in \Z$ we have
\[  h_{I\cap J}(\alpha)=h_I(\alpha)+h_J(\alpha)-h_{I+J}(\alpha).\]
Let $t$ be  an integer $t$ such $(I+J)_t\neq S_t$ and $(I+J)_{t+1}=S_{t+1}$ then for $\alpha\geq t+1$ we have
\[  h_{I\cap J}(\alpha)=h_I(\alpha)+h_J(\alpha).\]
\end{lemma}

\subsection{Complete intersection ideals}\label{subsecCII}
We will recall some results on complete intersection ideals.
Fix integers $c,m$ with $c\leq m$. Take $g_1,\dots,g_{m+c}\in S$ homogeneous polynomials which form a complete intersection.
Let $I=\langle g_1,\dots,g_m\rangle$ and let $J=\langle g_{c+1},\dots g_{m+c} \rangle.$ Then $\codim I=\codim J=m$, $\codim I+J=m+c$ and $I,J$ and $I+J$ are complete intersection ideals.

\begin{lemma} Let $g_1,\dots,g_{m+c},I,J$ as above. Suppose $f\in I\cap J$. Then there exists polynomials $Q_{ij},P_k$ for $1\leq i ,j\leq c; c+1\leq k \leq m$, such that
\[f= \sum_{i=1}^{c}\sum_{j=1}^{c} g_ig_{m+j} Q_{ij} +\sum_{i=c+1}^m g_iP_i.\]
\end{lemma}
\begin{proof}
Recall that  $I+J$ is a complete intersection ideal. This implies that the Koszul complex on $g_1,\dots,g_{m+c}$ is a minimal resolution of $I+J$. Denote with $e_1,\dots,e_{m+c}$ the standard basis of $R^{m+c}$. Consider the sequence of $R$-modules
\[ \wedge^2 R^{m+c}\to R^{m+c} \to R\]
where the first map sends $e_i\wedge e_j$ to $g_j e_i-g_ie_j$, the second map sends $e_i$ to $g_i$.
 Then this sequence is exact and the cokernel on the right equals $R/(I+J)$.

Since $f\in I \cap J$ there exists polynomials $a_i, b_i$, for $1\leq i \leq m$ such that
\[ f=\sum_{j=1}^m a_jg_j=\sum_{j=c+1}^{m+c} b_j g_j\]
This implies that
\[ \sum_{j=1}^c a_jg_j+\sum_{j=c+1}^m (a_j-b_j)g_j-\sum_{j=1}^c b_j g_{m+j} =0.\]
I.e., we obtained a syzygy among the minimal generators of $I+J$.

By the above observation on the exactness of the Koszul complex there exist  for $1\leq i<j \leq m+c$ homogeneous elements $k_{ij} \in S$  such that
for $t\leq c$ we have
\[ a_t=\sum_{i=1}^{t-1} k_{it}g_i-\sum_{i=t+1}^{m+c} k_{ti} g_i,\]
for $c+1\leq t\leq  m$ we have 
\[ a_t-b_t=\sum_{i=1}^{t-1} k_{it}g_i-\sum_{i=t+1}^{m+c} k_{ti} g_i,\]
and for $m+1\leq t \leq m+c$ we have
\[ -b_t=\sum_{i=1}^{t-1} k_{it} g_i-\sum_{i=t+1}^{m+c} k_{ti} g_{i}.\]

Obviously these $a_i$ are not unique in general. For $i=1,\dots, c$ define 
\[\tilde{a}_i:=a_i-\sum_{j=1}^{i-1} k_{ji} g_j+\sum_{j=i+1}^c k_{ij} g_j.\]
Then $\sum_{i=1}^c a_i g_i=\sum_{i=1}^c \tilde{a}_ig_i$ and 
\[ \tilde{a}_i =-\sum_{i=m+1}^{m+c} k_{ti} g_i \in J.\]
For $i=1,\dots, c$ write  $\tilde{a}_i=a'_i+a''_i$, with $a'_i\in I\cap J$ and $a''_i\in \langle g_{c+1},\dots,g_m \rangle$.
Then 
\begin{eqnarray*} f=\sum_{i=1}^m a_ig_j=\sum_{i=1}^c \tilde{a}_ig_i+\sim_{i=c+1}^m a_ig_i &\equiv &\sum_{i=1}^c a_i g_i \bmod  \langle g_{c+1},\dots, g_m \rangle\\
 &\equiv& \sum_{i=1}^c a'_i g_i \bmod \langle g_{c+1},\dots, g_m \rangle\end{eqnarray*}
In particular, there exists $P_i$ and $Q_{ij}$  such that
\[ f=\sum_{i=1}^c\sum_{j=1}^c Q_{ij} g_ig_{m+j}+\sum_{j=c+1}^m P_j g_j.\]
\end{proof}

\begin{remark}\label{rmkCIIdeal}
 In the case where $n$ is even and $m=n/2$ we will introduce the ideal associated to a Hodge class. For a hypersurface $X=V(f)$, where $X$ contains two $m-1$-dimensional complete intersections whose intersection is a $m-c-1$-dimensional complete intersection, we have two such ideals, namely
 \[\left\langle g_1,\dots,g_{c},\left(\sum_{j=1}^{c} h_jQ_{ij}\right)_{i=1}^{c},g_{c+1},\dots,g_{m},P_{c+1},\dots, P_{m}\right\rangle\]
 and
 \[ \left\langle h_1,\dots,h_{c},\left(\sum_{i=1}^{c} g_jQ_{ij}\right)_{j=1}^{c},g_{c+1},\dots,g_{m},P_{c+1},\dots, P_{m}\right\rangle.\]
 Their sum equals
 \[ \left\langle g_1,\dots,g_{c},h_1,\dots,h_{c},g_{c+1},\dots,g_{m},P_{c+1},\dots, P_{m}\right\rangle,\] which is again a complete intersection ideal, 
and their intersection equals
 \[ \left\langle (g_ih_j)_{i,j=1}^{c},(\sum_{j=1}^{c} h_jQ_{ij})_{i=1}^{c},(\sum_{i=1}^{c} g_jQ_{ij})_{j=1}^{c},g_{c+1},\dots,g_{m},P_{c+1},\dots, P_{m}\right\rangle.\] 
\end{remark}
\subsection{Bilinear maps}
We will recall some well-known results on bilinear maps. We were not able to identify a place in the literature containing the following results in the precise form we will need them.

For the rest of this section we use the following notation:
\begin{notation}
Let $V$ and $W$ be $\C$-vector spaces. 
For $i=1,2$, let $\varphi_i:V\times W \to \C$  be bilinear maps. Denote with $\ker_L(\varphi_i)$ the kernel of the induced map $V\to W^*$ sending $v$ to $w\mapsto \varphi_i(v,w)$ and, similarly, denote with $\ker_R(\varphi_i)$ the kernel of the induced map $W\to V^*$.

Let $V_i=\ker_L(\varphi_i),W_i=\ker_R(\varphi_i)$. Suppose that $V_1\cap V_2=0$ and $W_1\cap W_2=0$.
Let $r_i=\rank \varphi_i$ and let $s_i=\rank \varphi_j|_{V_i\times W_i}$
\end{notation}

\begin{lemma}\label{lemgenrk} With the above notation, we have for $i=1,2$
\[\max\{\rank \varphi_1+t\varphi_2\colon t\in \C^*\}\geq r_i+s_i.\]
\end{lemma}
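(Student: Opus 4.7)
The plan is to prove the inequality for $i=1$; the case $i=2$ then follows from $\rank(\varphi_1+t\varphi_2)=\rank(\varphi_2+t^{-1}\varphi_1)$ for $t\neq 0$ upon interchanging the roles of $\varphi_1,\varphi_2$. The strategy is to restrict $\varphi_1+t\varphi_2$ to a carefully chosen pair of subspaces of $V$ and $W$ of dimension $r_1+s_1$ so that the determinant of the resulting square matrix is a nonzero polynomial in $t$, hence nonvanishing for all but finitely many $t\in\C^*$.

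First I would fix complementary decompositions $V=V_1\oplus V_1'$ and $W=W_1\oplus W_1'$. Since $V_1$ and $W_1$ are the left and right kernels of $\varphi_1$, one has $\dim V_1'=\dim W_1'=r_1$ and $\varphi_1$ is represented on $V_1'\times W_1'$ by an invertible $r_1\times r_1$ matrix $A$, while $\varphi_1$ vanishes identically on any pair with either entry in $V_1$ or $W_1$. Inside $V_1\times W_1$ the restriction $\varphi_2|_{V_1\times W_1}$ has rank $s_1$, so I would further pick complements $\tilde V_1\subseteq V_1$ and $\tilde W_1\subseteq W_1$ of its left and right kernels, both of dimension $s_1$, so that $\varphi_2|_{\tilde V_1\times\tilde W_1}$ is represented by an invertible $s_1\times s_1$ matrix $E$.

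With these choices the matrix of $\varphi_1+t\varphi_2$ on $(V_1'\oplus\tilde V_1)\times(W_1'\oplus\tilde W_1)$ takes the block form
\[
M(t)=\begin{pmatrix} A+tB & tC \\ tD & tE \end{pmatrix}
\]
of size $(r_1+s_1)\times(r_1+s_1)$, since the $\varphi_1$-contribution to the three blocks involving $\tilde V_1$ or $\tilde W_1$ vanishes by the kernel properties. Applying the Schur-complement formula to the invertible block $A+tB$ (for $t$ near $0$), I would obtain
\[
\det M(t)=t^{s_1}\,\det(A+tB)\,\det\!\bigl(E-tD(A+tB)^{-1}C\bigr),
\]
so that $\det M(t)/t^{s_1}$ is a rational function regular at $t=0$ with value $\det A\cdot\det E\neq 0$. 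Consequently $\det M(t)$ is a polynomial in $t$ vanishing to order exactly $s_1$ at the origin, nonzero at all but finitely many $t\in\C$; for any such $t\neq 0$ the restricted bilinear form has full rank $r_1+s_1$, giving the required lower bound on $\rank(\varphi_1+t\varphi_2)$.

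I do not anticipate any serious obstacle. The only point requiring care is aligning the two kernel decompositions so that both sets of vanishing relations show up simultaneously in the block structure of $M(t)$; once the choice $\tilde V_1\subseteq\ker_L\varphi_1$, $\tilde W_1\subseteq\ker_R\varphi_1$ is in place, the factor $t^{s_1}$ in $\det M(t)$ is forced, and the rest is routine linear algebra.
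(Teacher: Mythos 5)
Your proof is correct, and it takes a genuinely different route from the paper's. The paper argues by induction on $s_1$: it picks $v\in V_1$, $w\in W_1$ with $\varphi_2(v,w)\neq 0$, splits off the line pair $\langle v\rangle\times\langle w\rangle$ by a simultaneous orthogonal decomposition (simultaneous precisely because $v,w$ lie in the kernels of $\varphi_1$), and gains one unit of rank per step. You instead exhibit the full $(r_1+s_1)$-dimensional subspaces $(V_1'\oplus\tilde V_1)\times(W_1'\oplus\tilde W_1)$ in one stroke and show the Gram determinant is a nonzero polynomial by computing its exact order of vanishing at $t=0$ via the Schur complement; the key structural input --- that $\varphi_1$ kills every pair touching $V_1$ or $W_1$, which is what forces the block shape $\bigl(\begin{smallmatrix}A+tB&tC\\tD&tE\end{smallmatrix}\bigr)$ --- is the same in both arguments. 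Your version is non-inductive and actually proves slightly more, namely that the bound $\rank(\varphi_1+t\varphi_2)\geq r_1+s_1$ holds for all but finitely many $t$, with the exceptional set controlled by a single explicit polynomial; this is closer in spirit to what the paper's Lemma~\ref{lemSpectrumSize} extracts later. The paper's stepwise decomposition, on the other hand, is more elementary and is recycled verbatim in the proof of that subsequent lemma. Two small points worth making explicit if you write this up: the invertibility of $A$ (respectively $E$) follows because the restriction of $\varphi_1$ to complements of its two kernels is the perfect pairing induced on $V/V_1\times W/W_1$ (and likewise for $\varphi_2|_{\tilde V_1\times\tilde W_1}$); and the reduction of $i=2$ to $i=1$ via $\rank(\varphi_1+t\varphi_2)=\rank(\varphi_2+t^{-1}\varphi_1)$ is fine since $t\mapsto t^{-1}$ is a bijection of $\C^*$.
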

\begin{proof} It suffices to prove this  statement for $i=1$.
We prove the statement by induction on $s_1$. If $s_1=0$ then there is nothing to prove.
Suppose now $s_1>0$. 
Pick $v\in V_1,w\in W_1$, such that $\varphi_2(v,w)\neq 0$. Let $V'$ be the orthogonal space of $w$ with respect to $\varphi_2$, let $W'$ be the orthogonal space of $v$ with respect to $\varphi_2$.
Then $V=\langle v \rangle \oplus V'$ and $W=\langle w \rangle \oplus W'$ are simultaneously orthogonal decompositions with respect to $\varphi_1,\varphi_2$, i.e., $\varphi_i(v,W')=0$ and $\varphi_i(V',w)=0$ hold for $i=1,2$.

Let $\psi_i$ be the restriction of $\varphi_i$ to $V'\times W'$. Then $\rank(\psi_1)=r_1$. Moreover, $\ker_L(\psi_1)=\ker_L(\varphi_1)\cap V'$ and $\ker_R(\psi_1)=\ker_R(\varphi_1)\cap W'$. The rank of $\psi_2$ on this space $s_1-1$.
Moreover the rank of $\varphi_1+t\varphi_2$ on $\langle v\rangle\times \langle w\rangle$ is one for all nonzero $t$.
I.e., by induction we have
\[\max\{\rank \varphi_1+t\varphi_2\colon t\in \C^*\}\geq 1+\max\{\rank \psi_1+t\psi_2\colon t\in \C^*\}=r_1+s_1.
\]
\end{proof}

\begin{lemma}\label{lemSpectrumSize} Using the above notation, suppose that $s_1=\dim V-r_1$ holds, i.e.,  $\ker_L \varphi_2|_{V_1\times W_1}=0$, then for at most $\dim V-s_1-s_2$ nonzero values of $t$ we have the strict inequality
$ \rank \varphi_1+t\varphi_2< \dim V$.
\end{lemma}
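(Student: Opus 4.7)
The plan is to translate the problem into matrix language and exhibit a single $(\dim V) \times (\dim V)$ minor of $M(t) := M_1 + tM_2$ whose nonzero roots contain all bad values of $t$ and whose $t$-degree is at most $\dim V - s_1 - s_2$. Write $n_1 = \dim V$ and let $M_i$ denote the matrix of $\varphi_i$. By Lemma~\ref{lemgenrk}, the hypothesis forces $\max_t \rank M(t) \geq r_1 + s_1 = n_1$, so generically $M(t)$ has rank $n_1$ and the rank-drop locus is contained in the zero set of any nonvanishing $n_1 \times n_1$ minor of $M(t)$.

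To construct a useful minor, I would choose subspaces well-adapted to both bilinear forms. Split $V = V_1 \oplus V_2 \oplus V_3$, possible since $V_1 \cap V_2 = 0$. In $W$, using $W_1 \cap W_2 = 0$, pick a complement $U_1$ of $W_1$ with $W_2 \subset U_1$ and write $U_1 = W_2 \oplus U_1'$. Let $U_2 \subset W_1$ be a complement of the right kernel of $\varphi_2|_{V_1 \times W_1}$; by the hypothesis $s_1 = \dim V_1$ this right kernel has codimension $\dim V_1$ in $W_1$, so $\dim U_2 = \dim V_1$ and $B := M_2|_{V_1 \times U_2}$ is invertible. A short check using $V_1 = \ker_L \varphi_1$ and $W_1 = \ker_R \varphi_1$ shows that $C := M_1|_{(V_2 \oplus V_3) \times U_1}$ is an invertible $r_1 \times r_1$ matrix. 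Set $U := U_1 \oplus U_2$ and let $N(t) := M(t)|_{V \times U}$.

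Arranging rows as $(V_1, V_2, V_3)$ and columns as $(W_2, U_1', U_2)$, the four vanishing blocks $M_1|_{V_1 \times *} = 0$, $M_1|_{* \times W_1} = 0$, $M_2|_{V_2 \times *} = 0$, $M_2|_{* \times W_2} = 0$ force every entry of the $V_1$-rows of $N(t)$ to carry a factor of $t$, the $V_2$-rows to be constant in $t$, and only the $V_3$-rows to be (affine) nonconstant. Thus $\det N(t) = t^{\dim V_1} P(t)$, and since each $V_3$-row contributes degree at most one, $\deg P \leq \dim V_3 = n_1 - \dim V_1 - \dim V_2 = r_1 + r_2 - n_1$; combined with $s_2 \leq \dim V_2 = n_1 - r_2$, this yields $\deg P \leq n_1 - s_1 - s_2$.

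The crux, and the main obstacle, is the nondegeneracy $P(0) \neq 0$. At $t = 0$ the $U_2$-column block of the matrix obtained from $N(t)$ by dividing the $V_1$-rows by $t$ has $B$ in the $V_1$-rows and zero elsewhere, so Laplace expansion along those columns is forced to pick the $V_1$-rows and collapses $P(0)$ to $\pm \det(B)\,\det(C) \neq 0$. The sharp $s_2$-improvement in the bound then comes precisely from the choice to place $W_2$ inside $U_1$: this is what makes the $V_2$-rows of $N(t)$ $t$-independent and cuts $\deg P$ down from $r_1$ to $r_1 + r_2 - n_1$.
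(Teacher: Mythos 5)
Your proof is correct, and it takes a genuinely different route from the paper's. The paper argues by a double induction on $(s_1,s_2)$: it repeatedly peels off a rank-one piece via a simultaneous orthogonal decomposition $V=\langle v\rangle\oplus V'$, $W=\langle w\rangle\oplus W'$, reducing to the base case $s_1=s_2=0$, where the bound $\dim V$ is just the degree of a single nonvanishing maximal minor. You instead generalize that base case directly: by adapting the column space $U=U_1\oplus U_2$ to both kernels you exhibit one explicit $\dim V\times\dim V$ minor $\det N(t)=t^{\dim V_1}P(t)$ with $P(0)=\pm\det(B)\det(C)\neq 0$ and $\deg P\leq\dim V_3=r_1+r_2-\dim V\leq\dim V-s_1-s_2$, so the nonzero rank-drop values lie among the at most $\dim V-s_1-s_2$ roots of $P$. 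All the steps check out: $B$ is invertible precisely because of the hypothesis $\ker_L\varphi_2|_{V_1\times W_1}=0$, and $C$ is invertible because $U_1$ complements $W_1=\ker_R\varphi_1$. This is arguably cleaner than the paper's argument, which leaves implicit the verification that the hypothesis and the quantities $r_i,s_i$ transform correctly under each inductive reduction. One small inaccuracy in your closing commentary only: the $t$-independence of the $V_2$-rows is automatic from $V_2=\ker_L\varphi_2$, whatever columns you select, and the invertibility of $C$ needs only that $U_1$ complement $W_1$; so the requirement $W_2\subset U_1$ is in fact superfluous. The real content of the column choice is placing $U_2$ inside $W_1$ and transverse to the right kernel of $\varphi_2|_{V_1\times W_1}$, which is what forces $P(0)\neq 0$.
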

\begin{proof}
We prove this statement by a double induction on $s_1, s_2$. Suppose $s_1=0$ and $s_2=0$.
Pick bases for $V$ and $W$. Let $A_i$ be the Gram matrix of $\varphi_i$. The values of $t$ for which the rank of $A_1+tA_2$ is less than $\dim V$ are precisely the common zeros of all maximal minors of $A_1+tA_2$, which are polynomials in $t$ of degree at most $\dim V$.
Since the maximal rank is $\dim V$ there is at least one such a minor which is nonzero, which has degree at most $\dim V$. Therefore there are at most $\dim V$ common zeroes of these maximal minors.

Suppose now $s_1=0$ and $s_2>0$ then
we can pick $(v,w)\in V_2\times W_2$ such that $\varphi_1(v,w)\neq 0$. Let $V'$ be the orthogonal space of $w$ with respect to $\varphi_1$, let $W'$ be the orthogonal space of $v$ with respect to $\varphi_1$.
This is a simultaneous orthogonal decomposition.

Then for all nonzero $t$ we have that $\rank \varphi_1+t\varphi_2=1+\rank (\psi_1+t\psi_2)$.
By induction there are at most $(\dim V-1)-(s_2-1)$ values of $t$ where the rank of $\psi_1+t\psi_2$ is less than $\dim V-1$.

Suppose now $s_1>0$ and $s_2>0$ then
we can pick $(v,w)\in V_1\times W_1$ such that $\varphi_1(v,w)\neq 0$. Let $V'$ be the orthogonal space of $w$ with respect to $\varphi_2$, let $W'$ be the orthogonal space of $v$ with respect to $\varphi_2$.
This is a simultaneous orthogonal decomposition.
Then for all nonzero $t$ we have that the rank of $\varphi_1+t\varphi_2$ equals $1+\rank (\psi_1+t\psi_2)$.
By induction there are at most $(\dim V-1)-(s_1-1)-s_2$ values of $t$ where the rank of $\psi_1+t\psi_2$ is less than $\dim V-1$.
\end{proof}

\begin{remark}\label{remb01}
We will apply this lemma  mostly in cases where $\dim V=\dim W$; $r_1=r_2=(\dim V+1)/2$ and $s_1=s_2$ hold. To apply this lemma we need  moreover that $s_1=(\dim V-1)/2$.
In this case $\dim V-s_1-s_2=1$, i.e., there is at most one nonzero value of $t$ with rank drop.
\end{remark}

\begin{lemma}\label{lemKernelBnd}
Using the above notation we have that for $\{i,j\}=\{1,2\}$
\[ \dim \ker_L \varphi_j|_{V_i\times W_i} \leq r_1+r_2-\dim W.\]
\end{lemma}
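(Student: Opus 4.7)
The plan is to reduce the bound to a straightforward dimension count on a restricted bilinear form. By the symmetry of the statement it suffices to treat the case $(i,j)=(1,2)$, so set $K:=\ker_L\varphi_2|_{V_1\times W_1}$.

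The first step is to identify $K$ as $V_1\cap L$, where $L:=\{v\in V : \varphi_2(v,W_1)=0\}$ is the left kernel of the restricted pairing $\varphi_2\colon V\times W_1\to \C$. The right kernel of this restricted pairing is exactly $W_1\cap W_2$: a vector $w\in W_1$ lies in it iff $\varphi_2(V,w)=0$ iff $w\in W_2$. By hypothesis $W_1\cap W_2=0$, so the restricted pairing has full right rank $\dim W_1=\dim W-r_1$. Applying rank--nullity to its left side then yields $\dim L=\dim V-(\dim W-r_1)=\dim V-\dim W+r_1$.

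The second step exploits that $V_2\subseteq L$ (since $\varphi_2(V_2,W)=0$ a fortiori gives $\varphi_2(V_2,W_1)=0$) together with the hypothesis $V_1\cap V_2=0$. The latter says that the natural map $V_1\cap L\to L/V_2$ induced by the quotient is injective, so
\[ \dim K\;\le\;\dim L-\dim V_2\;=\;(\dim V-\dim W+r_1)-(\dim V-r_2)\;=\;r_1+r_2-\dim W, \]
which is the desired inequality.

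The argument is pure linear algebra, and I do not anticipate a serious obstacle. The one point that requires care is recognising that the two hypotheses $V_1\cap V_2=0$ and $W_1\cap W_2=0$ play asymmetric roles in the argument: $W_1\cap W_2=0$ is what makes the restricted pairing on $V\times W_1$ non-degenerate on the right and so pins down $\dim L$, while $V_1\cap V_2=0$ is what forces the injection $V_1\cap L\hookrightarrow L/V_2$. Keeping these two uses separate is the only bookkeeping subtlety.
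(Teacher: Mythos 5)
Your argument is correct. It is, however, the mirror image of the proof in the paper rather than the same one: the paper first restricts the \emph{left} factor to $V_1$, uses $V_1\cap V_2=0$ to see that $\varphi_2|_{V_1\times W}$ has full left rank $\dim V_1$, then chooses a complement $W'$ of $W_2$ containing $W_1$ and bounds the rank drop when shrinking the right factor from $W'$ to $W_1$ by $\codim_{W'}W_1=r_1+r_2-\dim W$. You instead restrict the \emph{right} factor to $W_1$ first, use $W_1\cap W_2=0$ to compute the left kernel $L$ of $\varphi_2|_{V\times W_1}$ exactly via equality of left and right ranks, and then use $V_2\subseteq L$ together with $V_1\cap V_2=0$ to inject $K=V_1\cap L$ into $L/V_2$. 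The two hypotheses thus play swapped roles in the two arguments. Your version is slightly more economical: it avoids choosing the complement $W'$ and replaces the ``rank drops by at most the codimension'' estimate with an exact computation of $\dim L$ followed by a single injectivity observation, while the paper's version has the minor advantage of also exhibiting a lower bound on the rank of the restricted pairing along the way. Both are elementary and both yield the stated bound $r_1+r_2-\dim W$.
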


\begin{proof} Without loss of generality we may assume that $i=1,j=2$.
Let $a=r_1+r_2-\dim V$, let $b=r_1+r_2-\dim W$. Then $\dim V_1=\dim V-r_1=r_2-a$. Similar formulae hold for $\dim V_2,\dim W_1$ and $\dim W_2$.

Let $W'$ be a subspace of $W$ containing $W_1$ and such that $W'\oplus W_2=W$.
Then $\varphi_2|_{V_1\times W}$ has no left kernel, and therefore its rank equals to the dimension of $V_1$, which equals $r_2-a$.
Since $W_1\cap W_2=0$ it follows that the rank of $\varphi_2|_{V_1\times W'}$ is also $r_2-a$. Now $\dim W'=\dim W-\dim W_2=(r_1+r_2-b)-(r_1-b)=r_2$.

Hence the rank of $\varphi_2|_{V_1\times W_1}$ is at least $r_2-a-(\dim W'-\dim W_1)$, which equals $r_2-b-a$.
Therefore the left kernel has dimension at most $\dim V_1-(r_2-b-a)=b$.
\end{proof}

We are now interested in a particular class of bilinear forms. Let $I_1,I_2\subset S$ be ideals such that for $j=1,2$ the algebra $S/I_j$ is a graded Artinian Gorenstein algebra. Assume that the socle degrees of $S/I_1$ and $S/I_2$ coincide, say their socle degree equals $t$.
Fix isomorphisms $\sigma_j:(S/I_j)_t\to \C$.

For $\alpha\in \Z$,
consider $\varphi_j: S/(I_1\cap I_2)_\alpha\times S/(I_1\cap I_2)_{t-\alpha} \to (S/I_j)_t\to \C$, the composition of the natural multiplication map  with $\sigma_j$.

\begin{lemma}\label{lemHighDeg}
If $h_{I_1+I_2}(t-\alpha)=0$ then for all $\lambda\in \C^*$ we have $\ker_L(\varphi_1+\lambda \varphi_2)=0$.
\end{lemma}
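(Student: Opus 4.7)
The hypothesis $h_{I_1+I_2}(t-\alpha)=0$ translates into the surjectivity $(I_1)_{t-\alpha}+(I_2)_{t-\alpha}=S_{t-\alpha}$, which is the key algebraic input. My plan is to take an element $f\in(S/(I_1\cap I_2))_\alpha$ lying in the left kernel of $\varphi_1+\lambda\varphi_2$ and show separately that $f\in I_1$ and $f\in I_2$; the conjunction then yields $f=0$ in the quotient. Each containment will be obtained by invoking the Gorenstein pairing on $S/I_j$ after first reducing the required vanishing of $fg \bmod I_j$ for all $g\in S_{t-\alpha}$ to a vanishing against a convenient subspace.

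For the containment $f\in I_1$, I would first test the kernel relation against an arbitrary $g_2\in(I_2)_{t-\alpha}$. Since $fg_2\in I_2$, the $\varphi_2$-term vanishes and the relation collapses to $\sigma_1(fg_2\bmod I_1)=0$, i.e., $fg_2\in I_1$. Given then an arbitrary $g\in S_{t-\alpha}$, I use the surjectivity hypothesis to decompose $g=g_1+g_2$ with $g_1\in(I_1)_{t-\alpha}$ and $g_2\in(I_2)_{t-\alpha}$; then $fg_1\in I_1$ tautologically and $fg_2\in I_1$ by the previous step, so $fg\in I_1$. Since $S/I_1$ is Artinian Gorenstein of socle degree $t$, the multiplication pairing between degrees $\alpha$ and $t-\alpha$ is perfect, which forces $f\in I_1$.

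The symmetric argument uses $\lambda\neq 0$ in an essential way: testing against $g_1\in(I_1)_{t-\alpha}$ eliminates the $\varphi_1$-term and yields $\lambda\sigma_2(fg_1\bmod I_2)=0$, hence $fg_1\in I_2$; the same decomposition trick then gives $fg\in I_2$ for every $g\in S_{t-\alpha}$, so $f\in I_2$ by Gorenstein duality on $S/I_2$. Combining the two conclusions, $f\in I_1\cap I_2$, so $f=0$ in $(S/(I_1\cap I_2))_\alpha$. I do not anticipate any serious obstacle: the content is entirely the interplay between the decomposition $S_{t-\alpha}=(I_1)_{t-\alpha}+(I_2)_{t-\alpha}$ and Gorenstein duality, and the hypothesis $\lambda\in\C^*$ intervenes only to make the second summand cancellable on the $\varphi_1$-side.
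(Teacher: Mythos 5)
Your proof is correct and is essentially the paper's argument in different clothing: the paper phrases the same steps via the decomposition $\ker_R(\varphi_1)\oplus\ker_R(\varphi_2)=(S/(I_1\cap I_2))_{t-\alpha}$ (obtained from the same exact sequence underlying your surjectivity $(I_1)_{t-\alpha}+(I_2)_{t-\alpha}=S_{t-\alpha}$) and concludes $v\in\ker_L(\varphi_1)\cap\ker_L(\varphi_2)$, which is exactly your $f\in I_1\cap I_2$. No issues.
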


\begin{proof}
If $h_{I_1+I_2}(t-\alpha)=0$ then
\[ h_{I_1\cap I_2}(t-\alpha)=h_{I_1}(t-\alpha)+h_{I_2}(t-\alpha)\]
by Lemma~\ref{lemHP}.
The  left kernel of $\varphi_j$ equals $(I_j/I_1\cap I_2)_\alpha$. Hence the rank of $\varphi_j$ equals
$ h_{I_j}(\alpha)$, which equals $h_{I_j}(t-\alpha)$ by Gorenstein duality.
In particular, we have that $\dim S/(I_1\cap I_2)_{t-\alpha}=r_1+r_2$ and $\ker_R(\varphi_1)\oplus \ker_R(\varphi_2)=(S/I_1\cap I_2)_{t-\alpha}$.

Suppose now that $v\in \ker_L(\varphi_1+\lambda\varphi_2)$ and that $\lambda\neq0$. Then for all $w\in \ker_R(\varphi_2)$ we have
\[0= \varphi_1(v,w)+\lambda \varphi_2(v,w)=\varphi_1(v,w)\]
This implies that $v$ is in the ortogonal complement of $\ker_R(\varphi_2)$ with respect to $\varphi_1$, since it is also in the orthogonal space of $\ker_R(\varphi_1)$ we obtain that
$v$ is in the orthogonal space of $\ker_R(\varphi_1)+\ker_R(\varphi_2)=W$ with respect to $\varphi_1$, i.e., $v\in \ker_L(\varphi_1)$. Similarly $v\in \ker_L(\varphi_2)$ and therefore $v\in V_1\cap V_2=\{0\}$. 
\end{proof}

\begin{lemma}\label{lemDegShift} Let $I,J\subset S$ be  homogeneous ideal such that $S/I$ is Artinian Gorenstein. Let $\alpha$ be an integer such $0\leq \alpha \leq t-1$. Suppose the left kernel of
\[ \mu_{\alpha}:(J+I/I)_\alpha\times (J+I/I)_{t-\alpha} \to (S/I)_t\]
is nonzero. Then the left kernel of
\[ \mu_{\alpha+1}:(J+I/I)_{\alpha+1}\times (J+I/I)_{t-\alpha-1} \to (S/I)_t\]
is also nonzero.
\end{lemma}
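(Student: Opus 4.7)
The plan is to produce a nonzero element of $\ker_L \mu_{\alpha+1}$ by multiplying a nonzero element of $\ker_L \mu_\alpha$ by a suitably chosen variable $x_i$. The key observation is that $K:=(J+I)/I$ is an ideal in $S/I$, so multiplication by $x_i$ sends $K_\alpha$ into $K_{\alpha+1}$ and $K_{t-\alpha-1}$ into $K_{t-\alpha}$.

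First I would fix a nonzero $v\in\ker_L\mu_\alpha\subset K_\alpha$ and check that $x_iv\in\ker_L\mu_{\alpha+1}$ for every $i=0,\dots,n$. Indeed, for any $w\in K_{t-\alpha-1}$ we have $x_iw\in K_{t-\alpha}$, and by associativity of multiplication in $S/I$
\[ \mu_{\alpha+1}(x_iv,w)=(x_iv)w=v(x_iw)=\mu_\alpha(v,x_iw)=0.\]
So the candidates $x_0v,\dots,x_nv$ all lie in $\ker_L\mu_{\alpha+1}$; what remains is to show that at least one of them is nonzero in $S/I$.

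This is where the Gorenstein hypothesis enters: since $S/I$ is a graded Artinian Gorenstein algebra of socle degree $t$, its socle is one-dimensional and concentrated in degree $t$ (see e.g.\ Example~\ref{exaCI} and the discussion following Lemma~\ref{lemGorensteinConstr}). Equivalently, the multiplication pairing $(S/I)_\alpha\times(S/I)_{t-\alpha}\to(S/I)_t$ is perfect, so in particular for $\alpha\leq t-1$, if $v\in(S/I)_\alpha$ is nonzero then there exists some $w\in(S/I)_{t-\alpha}$ with $vw\neq0$; writing any such $w$ as a polynomial of positive degree (which we may since $\alpha\leq t-1$ forces $t-\alpha\geq 1$), we see that $x_iv\neq 0$ for at least one variable $x_i$ appearing in a monomial of $w$.

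Combining the two steps yields a nonzero $x_iv\in\ker_L\mu_{\alpha+1}$, completing the argument. There is no real obstacle here beyond checking that one can apply the Gorenstein pairing in degree $\alpha\leq t-1$; the hypothesis $\alpha\leq t-1$ in the statement is precisely what makes the socle argument work, and no further input from $J$ is needed beyond the fact that $K=(J+I)/I$ is an ideal.
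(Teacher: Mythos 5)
Your proof is correct and follows essentially the same route as the paper's: multiply a nonzero element of $\ker_L\mu_\alpha$ by a linear form (you use a single variable, the paper a general $\ell\in S_1$), verify membership in $\ker_L\mu_{\alpha+1}$ by associativity, and use the Gorenstein property — that the socle of $S/I$ sits only in degree $t$ — to guarantee the product is nonzero when $\alpha\leq t-1$. No substantive difference.
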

\begin{proof}
Suppose $f$ is a nonzero element of the left kernel of $\mu_\alpha$. Since $\alpha<t$ we can find a linear form $\ell$ such that $\ell f$ is nonzero. (If $\ell f$ is zero for all $\ell \in S_1$ then $\deg(\ell f)=t+1$, contradicting our choice of $\alpha$.)

For any $g\in (J+I)_{t-\alpha-1}$ we have that $\ell g\in (J+I)_{t-\alpha}$ and therefore
\[ \mu_{\alpha+1}(\ell f,g)=\mu_{\alpha}(f,\ell g)=0\]
Hence $\ell f\in \ker_L(\mu_{\alpha+1})$.
\end{proof}

\section{Construction of the ideal of a Hodge class}\label{secIdealCon}
Fix  integers $k\geq 1, d\geq 2+\frac{2}{k}$. Let $S=\C[x_0,\dots,x_{2k+1}]$.
Let $X$ be a smooth hypersurface of degree $d$ in $\Ps^{2k+1}$, containing at least one Hodge class $\gamma$, such that $\gamma_{\prim}$ is nonzero.
Let $J$ be the Jacobian ideal of $X$.

\begin{construction}\label{conIdeal}
%Suppose that $X$ is a smooth hypersurface of degree $d$ in $\Ps^{2k+1}$.
Let $\gamma \in H^{k,k} (X,\C)\cap H^{2k}(X,\Q)$ be a Hodge class such that $\gamma_{\prim}$ is not zero, i.e., $\gamma$ is not a multiple of the class associated with the intersection of $X$ with $k$ hyperplanes.

 Griffiths' work \cite{GriRat} on the period map yields  an identification of $H^{2k-p,p}(X)_{\prim}$ with $(S/J)_{(p+1)d-2k-2}$.
Moreover, by Carlson--Griffiths \cite{CarGri} there is a choice of  isomorphisms $H^{2k,2k}(X)\cong \C\cong (S/J)_{(d-2) (2k+2)}$ such that
 the cupproduct \[ H^{2k-p,p}(X)_{\prim} \times H^{p,2k-p}(X)_{\prim} \to H^{2k,2k}(X)\] equals the multiplication map
\[  (S/J)_{(p+1)d-2k-2} \times (S/J)_{(2k-p+1)d-2k-2} \to (S/J)_{(d-2)(2k+2)}.\]
Let $f_\gamma \in (S/J)_{(d-2)(k+1)}$ be  the image of $\gamma_{\prim}$. Consider now the subspace  $f_\gamma^\perp$ in $(S/J)_{(d-2)(k+1)}$. Let $W$ be its inverse image in $S_{(d-2)(k+1)}$ under the projection map. 
Then $J_{(d-2)(k+1)}\subset W$.  Since $J_{d-1}$ is base point-free and $(d-2)(k+1)\geq d-1$ we have that $W$ is also base point free.
Let $I(\gamma)$ be the largest ideal of $S$ generated in degree $\leq (k+1)(d-2)$  such that $I_{(k+1)(d-2)}=W$. 

Then $I(\gamma)$ is the \emph{ideal associated} with the Hodge class $\gamma$.
\end{construction}
\begin{remark}\label{rmkSocDeg}
By construction the algebra $S/I(\gamma)$ is an Artinian Gorenstein algebra of socle degree $(d-2)(k+1)$, see Lemma~\ref{lemGorensteinConstr}.
\end{remark}
\begin{definition}
Let $f\in S_d$ be an irreducible polynomial such that $X=V(f)$ is a smooth hypersurface of degree $d$. Then $[f]\in U_d$.

Suppose $X$ is on the Hodge locus, i.e., there is a $\gamma \in H^{k,k} (X,\C)\cap H^{2k}(X,\Q)$ be a Hodge class such that $\gamma_{\prim}$ is nonzero.

Let $U\subset U_d$ be analytic open subset containing $[f]$. Then $H^{2k}(X_u,\Z), u\in U$ defines a natural local system of free $\Z$-modules on $U$. Consider the holomorphic vector bundle $\mathcal{H}=H^{2k}(X_u,\Z)\otimes \mathcal{O}_U$ on $U$.
There is a natural decreasing filtration $F^i \mathcal{H}$  for $0\leq i \leq 2k$, induced by the Hodge filtration on $H^2(X_t,\Z)$, satisfying the transversality property
\[ \nabla(F^p \mathcal{H})\subset F^{p-1} \mathcal{H}\otimes \Omega^1_U\]
If $U$ is simply connected then we can extend $\gamma\in H^{2k}(X,\Q)$ naturally to a section of $H^{2k}(X_u,\Q)\to U$. Denote with $\gamma_u$ the value of this section  in $H^{2k}(X_u,\Q)$.
We define the Hodge locus of $\gamma$ in $U$ denoted by $\NL(\gamma)$,  as the locus
\[ \{u\in U\mid \gamma_u \in F^k \mathcal{H}_u\}\]
\end{definition}
\begin{remark} The Hodge locus is defined as the vanish locus of the induced section $\overline{\gamma}$ of $\mathcal{H}/F^k\mathcal{H}$, hence it has a natural structure of a complex analytic scheme. Cattani, Deligne and Kaplan show that this locus is actually an algebraic scheme. \cite{DelKap}
\end{remark}

\begin{remark}
The definition of the Hodge locus depends on the choice of the extension of $\gamma$ as a section of $H^2(X_u,\Z)$, hence it is only well-defined if $U$ is simply connected. Moreover, the intersection of $U$ with the Zariski closure of $\NL(\gamma)$ in $U_d$ may be strictly larger than $\NL(\gamma)$.
\end{remark}

\begin{example}\label{exaIdealCI}
Suppose now that $X$ contains a $k$-dimensional subvariety $Y$, which is a complete intersection in $\Ps^{2k+1}$. Let $g_0,\dots,g_k$ be a system of homogeneous generators of $I(Y)$.
%Let $X=V(f)\subset \Ps^{2k+1}$ be a smooth hypersurface of degree $d$, let $Y\subset X$ be a $k$-dimensional complete intersection given by $g_0=\dots=g_k=0$, where $g_0,\dots,g_k$ are homogeneous. 
Then there exists $h_0,\dots,h_k$ such that
\[ f=g_0h_0+\dots +g_kh_k.\]
In this case the ideal associated with $[Y]$ is
\[ \langle g_0,\dots,g_k,h_0,\dots,h_k\rangle.\]
See, e.g., \cite{KloCI}.
\end{example}

\begin{lemma}\label{lemCodimNL} Suppose that $k\geq 2$ or that $k=1$ and $d\neq 4$. Then we have
\[ \codim T_X\NL(\gamma) = \codim_{S_d} (I(\gamma))_d\]
\end{lemma}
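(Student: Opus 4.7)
The plan is to identify $T_X\NL(\gamma)$ directly from Griffiths' infinitesimal description of the period map, and then rewrite the resulting vanishing condition as membership in $I(\gamma)_d$ via Macaulay duality in the Artinian Gorenstein ring $S/J$.

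\emph{First}, I would recall that $T_{[X]}U_d\cong S_d/\langle f\rangle$, and that $\dot f$ lies in the Zariski tangent space $T_X\NL(\gamma)$ iff the derivative of the period map sends $\gamma_{\prim}$ to a class whose $(k-1,k+1)$-component vanishes. Via Griffiths' identification $H^{p,2k-p}(X)_{\prim}\cong (S/J)_{(2k-p+1)d-2k-2}$ and the fact that this derivative is induced by multiplication by $\dot f$, the condition reads
\[
\dot f\cdot f_\gamma=0\quad\text{in }(S/J)_{(k+1)(d-2)+d}.
\]
Euler's identity gives $f\in J$, so the condition descends from $S_d$ to $S_d/\langle f\rangle$.

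\emph{Second}, I would invoke Macaulay duality. The Jacobian ring $S/J$ is Artinian Gorenstein of socle degree $(d-2)(2k+2)$ (Example~\ref{exaCI}), so the pairing
\[
(S/J)_\alpha\times(S/J)_{(d-2)(2k+2)-\alpha}\to(S/J)_{(d-2)(2k+2)}\cong\C
\]
is perfect. With $\alpha=(k+1)(d-2)+d$, the complementary degree is $k(d-2)-2$, which is non-negative by the standing hypothesis $d\geq 2+2/k$. Hence the vanishing in the first step is equivalent to $f_\gamma\cdot(\dot fh)=0$ in the socle for every $h\in(S/J)_{k(d-2)-2}$, i.e.\ $\dot fh\in f_\gamma^{\perp}\subset(S/J)_{(k+1)(d-2)}$. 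Lifting to $S$ and using $J\subset I(\gamma)$ (which follows by applying the maximality in Construction~\ref{conIdeal} to the ideal $J+I(\gamma)$, whose $(k+1)(d-2)$-graded piece is still $W$), this is equivalent to
\[
\dot f\cdot S_{k(d-2)-2}\subset I(\gamma)_{(k+1)(d-2)}.
\]

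\emph{Third}, I would apply the defining maximality of $I(\gamma)$: among all homogeneous ideals whose $(k+1)(d-2)$-graded piece equals $W$, the ideal $I(\gamma)$ is the largest. Hence $\dot f\in S_d$ satisfies the inclusion above iff $\dot f\in I(\gamma)_d$: the ideal generated by $I(\gamma)$ and any such $\dot f$ has the same $(k+1)(d-2)$-graded piece $W$, so $\dot f\in I(\gamma)_d$ by maximality; the reverse is immediate. Combining the three steps yields $T_X\NL(\gamma)=I(\gamma)_d/\langle f\rangle$ inside $S_d/\langle f\rangle$, which gives the codimension identity.

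The hard part is the Macaulay duality step. In the borderline case $k=1$, $d=4$ one has $(k+1)(d-2)=d$ and the complementary degree $k(d-2)-2$ vanishes, so the duality argument collapses onto the socle of $S/J$ and produces no new information. This is precisely the case excluded by the hypothesis; there the conclusion must be obtained separately, for example by exploiting the codimension-one nature of the Noether--Lefschetz locus on quartic surfaces via the Lefschetz $(1,1)$-theorem.
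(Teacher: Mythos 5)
Your main argument is correct and is essentially the paper's proof: the paper simply quotes the tangent-space condition directly in the dual form $\xi\cup H^{k+1,k-1}\subset\gamma_{\prim}^{\perp}$ (with $H^{k+1,k-1}_{\prim}\cong (S/J)_{kd-2k-2}$) and then invokes the maximality in Construction~\ref{conIdeal}, whereas you start from $\overline{\nabla}_{\dot f}\gamma=0$ in degree $(k+1)(d-2)+d$ and recover that same condition by Gorenstein duality in $S/J$ before applying maximality. That extra dualization step is carried out correctly.

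Your closing diagnosis of the excluded case, however, is wrong on both counts. The duality step does not ``collapse'' when the complementary degree $k(d-2)-2$ equals $0$: the pairing $(S/J)_{(d-2)(2k+2)}\times(S/J)_0\to(S/J)_{(d-2)(2k+2)}$ is still perfect, and the argument goes through verbatim (it just says $\dot f\in W=I(\gamma)_d$ directly). Moreover, $k(d-2)-2=0$ also for $(k,d)=(2,3)$, i.e.\ cubic fourfolds, which the lemma does \emph{not} exclude — so this cannot be ``precisely the case excluded by the hypothesis.'' The hypothesis enters elsewhere: the paper uses it (via \cite[Lemma 6.15]{Voi2}) to identify $(S/J)_d$ with $H^1(X,\Theta_X)=H^0(X,N_{X|\Ps^{2k+1}})$, an identification that fails exactly for quartic surfaces, where $H^{2,0}(X)\neq 0$ forces $H^1(\Theta_{\Ps^3}|_X)\neq 0$ and the Kodaira--Spencer map from polynomial deformations is not surjective. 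If you prefer to bypass $H^1(\Theta_X)$ and work throughout in $S_d/\langle f\rangle$ as you do, you should say explicitly that this is where you depart from the cited identification, rather than attributing the restriction to the Macaulay duality step.
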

\begin{proof}
Recall that  we have the standard exact sequence
\[ 0 \to \Theta_{X}\to \Theta_{\Ps^{2k+1}\mid X} \to N_{X\mid \Ps^{2k+1}} \to 0.\]
Using Bott's vanishing theorem it follows easily that $H^1(\Theta_{ \Ps^{2k+1}\mid X})=0$ under our assumption. Consider now 
\[  H^0(N_{X\mid \Ps^{2k+1}}) \twoheadrightarrow  H^1(X,\Theta_X) \cong (S/J)_{d},\]
where the  isomorphism follows from  \cite[Lemma 6.15]{Voi2}.

The image of the tangent space $T_X\NL(\gamma)$  at $[X]$ in $H^1(X,\Theta_X)$  equals 
\[ \{ \xi \in H^1(X,\Theta_X) \colon \xi\cup H^{k+1,k-1}\subset \gamma_{\prim}^\perp\}.\]
 From the construction of $I(\gamma)$ it follows now that the image of  $T_X\NL(\gamma)$ in $(S/J)_d$ equals $I(\gamma)_d/J_d$.
\end{proof}
\begin{definition}\label{defCom}
%Let $X\subset \Ps^{2k+1}$ be a smooth hypersurface of degree $d$. 
Let $U\subset U_d$ be a small neighborhood of $[X]$.
Pick  Hodge classes $\gamma_1,\dots,\gamma_r$ on $X$. Then the  \emph{Hodge locus of $\gamma_1,\dots,\gamma_r$} denoted by $\NL(\gamma_1,\dots,\gamma_r)$ is the locus in $U$ where all $\gamma _i$ remain a Hodge class. 
\end{definition}
\begin{remark}\label{rmkIns}
From the above discussion it is immediate that the codimension of $T_X\NL(\gamma_1,\dots,\gamma_r)$ in $T_X U_d$ equals the codimension of $\cap_{i=1}^r I(\gamma_i)_d$ in $S_d$. The latter equals the codimension of $\cap_{i=1}^r T_X\NL(\gamma_i)$ in $T_X U_d$.
\end{remark}

The following result should be well-known to experts, but we did not locate a precise reference in the literature:
\begin{proposition}\label{prpSmooth} Let $X$ be a hypersurface of degree at least $d\geq 2+\frac{2}{k}$ containing two $k$-planes $\Pi_1,\Pi_2$. Then the Hodge locus $\NL(\Pi_1,\Pi_2)$ is smooth in a neighborhood of $X$.
\end{proposition}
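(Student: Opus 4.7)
The plan is to produce, in a neighborhood of $[X]$ in $U_d$, a smooth algebraic subvariety $V$ contained in $\NL([\Pi_1],[\Pi_2])_{\red}$ of dimension equal to $\dim T_X\NL([\Pi_1],[\Pi_2])$. Once this is achieved, the chain of inequalities
\[
\dim T_X\NL([\Pi_1],[\Pi_2])\geq \dim_{[X]}\NL([\Pi_1],[\Pi_2])\geq \dim V=\dim T_X\NL([\Pi_1],[\Pi_2])
\]
forces all of them to be equalities, showing that $\NL([\Pi_1],[\Pi_2])$ is smooth at $[X]$.

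For $V$ I would take the image, near $[X]$, of the incidence variety
\[
\mathcal{W}=\{(f,L_1,L_2)\in S_d\times G^\circ : L_1\cup L_2\subset V(f)\},
\]
where $G^\circ\subset G(k+1,2k+2)\times G(k+1,2k+2)$ is the open Schubert stratum of pairs with $\dim L_1\cap L_2=m:=\dim\Pi_1\cap\Pi_2$. The stratum $G^\circ$ is smooth, and since $I(L_1)+I(L_2)=I(L_1\cap L_2)$ is generated in degree one for linear $L_1,L_2$, the Hilbert function of $I(L_1\cup L_2)$ depends only on $m$, $k$, $d$ by Lemma~\ref{lemHP}. Hence the projection $\mathcal{W}\to G^\circ$ is a vector bundle, $\mathcal{W}$ is smooth, and the differential of $\pi:\mathcal{W}\to U_d$ has constant rank near $(X,\Pi_1,\Pi_2)$, so the image is a smooth analytic subvariety there. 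Since $[\Pi_1]$ and $[\Pi_2]$ extend algebraically along $\mathcal{W}$, this image lies in $\NL([\Pi_1],[\Pi_2])_{\red}$.

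The key identification is that the image of $d\pi$ at $(X,\Pi_1,\Pi_2)$ equals the lift to $S_d$ of $T_X\NL([\Pi_1],[\Pi_2])$, which by Remark~\ref{rmkIns} and Example~\ref{exaIdealCI} is $(I([\Pi_1])\cap I([\Pi_2]))_d$ with $I([\Pi_i])=\langle g_0^{(i)},\dots,g_k^{(i)},h_0^{(i)},\dots,h_k^{(i)}\rangle$. The image of $d\pi$ decomposes into a vertical piece $I(\Pi_1\cup\Pi_2)_d$ (varying $f$ with both planes fixed) plus horizontal contributions $\sum_j\tilde g_j^{(i)}h_j^{(i)}$ produced by infinitesimal motions of $\Pi_i$ via the normal bundle $N_{\Pi_i/\Ps^{2k+1}}\cong\cO_{\Pi_i}(1)^{k+1}$. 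The main technical obstacle is this horizontal identification: one has to unwind Example~\ref{exaIdealCI} to verify that varying the linear generators $g_j^{(i)}$ of $\Pi_i$ shifts $f$ by exactly the prescribed combination of the companion polynomials $h_j^{(i)}$ modulo $I(\Pi_i)_d$. Together the two pieces span $(I([\Pi_1])\cap I([\Pi_2]))_d$, and the dimension count then closes the argument.
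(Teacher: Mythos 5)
Your proposal is correct in substance and follows the same overall strategy as the paper --- sandwich the local dimension of $\NL([\Pi_1],[\Pi_2])$ between the dimension of its tangent space and the dimension of the family of hypersurfaces containing two moving $k$-planes --- but it closes the dimension count differently. The paper computes both ends explicitly: the codimension of $T_X\NL([\Pi_1],[\Pi_2])$ via $h_{I_1\cap I_2}(d)=h_{I_1}(d)+h_{I_2}(d)-h_{I_1+I_2}(d)$ and the Koszul resolutions of the complete intersection ideals, and the codimension of the image of the incidence variety via a Grassmannian count plus the fibre dimension $h_{I(\Pi_1)\cap I(\Pi_2)}(d)$ (using that a degree $\geq 3$ hypersurface contains only finitely many such pairs of planes, so the projection has finite fibres). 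You instead avoid all numerics by identifying $\mathrm{im}(d\pi)$ with the lift of $T_X\NL([\Pi_1],[\Pi_2])$; this is cleaner and more robust (it would work verbatim for complete intersection cycles, where the explicit count is messier), at the price of the two steps you leave as sketches. The first, the ``horizontal'' identification, does work out: since the $g_j^{(i)}$ are linear, the companion polynomials $h_j^{(i)}$ have degree $d-1$, so $\langle h_0^{(i)},\dots,h_k^{(i)}\rangle_d$ is exactly the span of the perturbations $\sum_j\tilde g_j^{(i)}h_j^{(i)}$ with $\tilde g_j^{(i)}$ linear, and a $\delta f$ lies in $\mathrm{im}(d\pi)$ iff it lies in $I([\Pi_1])_d\cap I([\Pi_2])_d=(I_1\cap I_2)_d$. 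The second, the constant-rank claim, is asserted without justification and is in fact unnecessary: it suffices to note that $\dim_{[X]}\pi(\mathcal{W})\geq\dim\mathcal{W}-\dim\pi^{-1}([X])\geq\dim\mathcal{W}-\dim\ker d\pi=\mathrm{rk}\,d\pi=\dim T_X\NL([\Pi_1],[\Pi_2])$, which already closes the sandwich; I would recommend replacing the constant-rank step by this bound (or by the paper's finiteness-of-fibres remark) when writing the argument up.
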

\begin{proof}
Let $I_1,I_2$ be the ideals associated with the Hodge classes $\Pi_1,\Pi_2$ respecitvely.
The tangent space of $\NL(\Pi_1,\Pi_2)$ has codimension $h_{I_1\cap I_2}(d)=h_{I_1}(d)+h_{I_2}(d)-h_{I_1+I_2}(d)$.

The ideals $I_1$ and $I_2$ are complete intersection ideals with $k+1$ generators of degree $1$ and $k+1$ generators of degree $d-1$. In this case the Koszul complex gives a resolution of $I_j$ and we find that
$h_{I_1}(d)=h_{I_2}(d)=\binom{k+d}{k}-(k+1)^2$.
The ideal $I_1+I_2$ is a complete intersection ideal with $k+c+1$ generators of degree $1$ and $k-c+1$ generators of degree $d-1$
Hence  the value of its Hilbert function at degree $d$  equals $\binom{k-c+d}{k-c}-(k+1-c)^2$. Hence the tangenst space of $\NL(\Pi_1,\Pi_2)$ has codimensiom
\[2 \binom{k+d}{k}-\binom{k-c+d}{k-c}-(k+1)^2-2c(k+1)+c^2\]

To calculate the dimension of $\NL(\Pi_1,\Pi_2)$ we first calculate the dimension of the space of $k$-planes in $\Ps^{2k+1}$ intersecting in dimension $k-c$. This is equivalent to choosing a $k-c+1$ dimensional subspace in $\C^{2k+2}$ and two additional $c$-dimensional spaces in the quotient space $\C^{k+c+1}$. Using the standard formula for the dimension of Grassmannians we obtain that this space has dimension
\[ (k+1-c)(k+1+c)+2c(k+1)=(k+1)^2-c^2+2c(k+1).\]

For a fixed pair of such planes $\Pi_1,\Pi_2$, the space of hypersurface containing $\Pi_1\cup \Pi_2$ has codimension
\begin{eqnarray*}h_{I(\Pi_1)\cap I(\Pi_2)}(d)&=&h_{I(\Pi_1)}(d)+h_{I(\Pi_2)}(d)-h_{I(\Pi_1)(d)+I(\Pi_2)}(d)\\&=&2\binom{k+d}{d}-\binom{k-c+d}{k-c}.\end{eqnarray*}
Since a hypersurface of degree at least 3 cannot contain a family of linear space of half its dimension, we find that the total codimension equals
\[ 2\binom{k+d}{d}-\binom{k-c+d}{k-c}(k+1)^2+c^2-2c(k+1).\]
In particular, the tangent space has the same dimension as the actual space and therefore is smooth.
\end{proof}
\begin{remark} 
Note that if $X$ contains two pairs of $k$-planes intersecting in codimension $c$ then the Zariski closure of $\NL(\Pi_1,\Pi_2)$ is singular at $X$, but $\NL(\Pi_1,\Pi_2)$ is smooth at $X$.
\end{remark}

\begin{definition}%Let $X\subset \Ps^{2k+1}$ be a smooth hypersurface of degree $d$. Pick  Hodge classes $\gamma_1,\dots,\gamma_r$ on $X$.
Let  $\gamma_1,\dots,\gamma_r$ be Hodge classes on $X$.
Fix a point $(a_1:\dots :a_r)\in \Ps^{r-1}(\Q)$. %, where $a_1,\dots,a_r\in \Z$.
We say that we have \emph{excess tangent dimension} at $(a_1:\dots:a_r)$ if
\[ T_X\NL(\gamma_1,\dots,\gamma_r) \subsetneq T_X\NL(a_1\gamma_1+\dots+a_r\gamma_r)\]
\end{definition}
The Hodge locus $\NL(\gamma)$ does not alter if we replace $\gamma$ by a multiple of $\gamma$, hence this locus depends only on the point in $\Ps^{r-1}(\Q)$.
In particular, if $\gamma\in H^{2k}(X,\Q)_{\prim}\cap H^{k,k}(X,\C)_{\prim}$ it makes sense to define $\NL(\gamma)$. % In particular, for $a_1\neq0$ we have $\NL(a_1\gamma_1+a_2\gamma_2)=\NL(\gamma_1+\frac{a_2}{a_1}\gamma_2)$.

We will now investigate whether for given $\gamma_1,\dots,\gamma_r$ there is \emph{generically} no excess tangent dimension, i.e., when there is a Zariski open  subset $U$ of $ \Ps^{r-1}(\Q)$ such that for all $a\in U$ there  is no excess tangent dimension.
In case we are merely interested in the existence of $U$ then we may restrict our analysis to the case $r=2$, 
since the tangent space of $\NL(\gamma_1,\dots,\gamma_r)$ is obtained by iterating intersections. I.e., we  aim to describe $T_X \NL(a_0\gamma_1+a_1 \gamma_2)$ for $(a_0:a_1)\in \Ps^1( \Q)$ in terms of $T_X \NL(\gamma_1)$ and $T_X\NL(\gamma_2)$. Since we are only interested when $a_0a_1\neq 0$ we may identify $(a_0:a_1)=(1:\lambda)$, with $\lambda\in \Q^*$, and restrict ourselves to loci of the shape $\NL(\gamma_1+\lambda \gamma_2)$.

\begin{lemma}\label{lemmakey}
Suppose $X\subset \Ps^{2k+1}$ is a smooth hypersurface of degree $d$ such that $X$ contains two subvarieties $Y_1,Y_2$ of dimension $k$. Suppose that $[Y_1]_{\prim}$ and $[Y_2]_{\prim}$ are linearly independent in $H^{2k}(X,\Q)_{\prim}$.

For $j=1,2$, let $I_j$ be the ideal associated with the Hodge class $[Y_j]$, cf. Construction~\ref{conIdeal}. Fix   isomorphisms $\sigma_j:(S/I_j)_{(k+1)(d-2)}\cong \C$. Let $\psi_j$ be the pairing
\[ (S/I_1\cap I_2)_d\times (S/I_1\cap I_2)_{kd-2k-2}\to  (S/I_1\cap I_2)_{(k+1)(d-2)} \to (S/I_j)_{(k+1)(d-2)}\stackrel{\sigma_j}{\longrightarrow}\C.\]

For $\lambda \in \Q^*$ let $I^{(\lambda)}$ be the ideal associated to $[Y_1]+\lambda [Y_2]$. Then there exists an injective function $\nu: \C^*\to\C^*$ such that
\[T_X\NL([Y_1]+\lambda[Y_2])/T_X\NL([Y_1],[Y_2])= I^{(\lambda)}_d/(I_1\cap I_2)_d=\ker_L (\psi_1+\nu(\lambda)\psi_2)\]
\end{lemma}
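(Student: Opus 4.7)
The plan is to combine three ingredients: the lifting of each relevant tangent space to a graded piece of an explicit ideal in $S$ provided by Lemma~\ref{lemCodimNL} and Remark~\ref{rmkIns}; the Griffiths IVHS description of that ideal as a socle condition in $S/J$; and the $\C$-linearity of the Carlson--Griffiths identification $H^{k,k}(X)_{\prim}\cong (S/J)_{(k+1)(d-2)}$. For the first equality I would simply observe that $T_X\NL([Y_j])$, $T_X\NL([Y_1]+\lambda[Y_2])$ and $T_X\NL([Y_1],[Y_2])$ lift to $(I_j)_d$, $I^{(\lambda)}_d$ and $(I_1)_d\cap (I_2)_d=(I_1\cap I_2)_d$ respectively, and that the set-theoretic inclusion $\NL([Y_1],[Y_2])\subset \NL([Y_1]+\lambda[Y_2])$ forces $(I_1\cap I_2)_d\subset I^{(\lambda)}_d$; dividing yields the first equality.

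For the second equality, recall that $\xi\in S_d$ lies in $I(\gamma)_d$ if and only if $\xi h f_\gamma$ vanishes in the socle $(S/J)_{(2k+2)(d-2)}$ for every $h\in S_{kd-2k-2}$, where $f_\gamma\in(S/J)_{(k+1)(d-2)}$ represents $\gamma_{\prim}$. By the $\C$-linearity of Carlson--Griffiths, $f_{[Y_1]+\lambda[Y_2]}=f_{[Y_1]}+\lambda f_{[Y_2]}$, and so the condition for $\xi\in I^{(\lambda)}_d$ reads
\[
\xi h f_{[Y_1]}+\lambda\,\xi h f_{[Y_2]}\equiv 0 \text{ in the socle of }S/J,\quad \text{for all } h\in S_{kd-2k-2}.
\]

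It remains to recognize each summand as a constant multiple of $\psi_j(\bar\xi,\bar h)$. The map $\cdot f_{[Y_j]}\colon (S/J)_{(k+1)(d-2)}\to (S/J)_{(2k+2)(d-2)}\cong\C$ has kernel $f_{[Y_j]}^\perp=(I_j)_{(k+1)(d-2)}/J_{(k+1)(d-2)}$ by the very construction of $I_j$, so it factors through an isomorphism $\tau_j\colon (S/I_j)_{(k+1)(d-2)}\xrightarrow{\sim}\C$ that differs from $\sigma_j$ by a nonzero scalar $c_j$. Since $J\subset I_1\cap I_2\subset I_j$, both quotients $\overline{\xi h}\in(S/I_1\cap I_2)_{(k+1)(d-2)}$ and $\overline{\xi h}\in(S/I_j)_{(k+1)(d-2)}$ are compatibly defined, and one gets $\xi h f_{[Y_j]}=c_j\psi_j(\bar\xi,\bar h)$ in the socle. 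Substituting shows $\xi\in I^{(\lambda)}_d$ iff $c_1\psi_1(\bar\xi,\bar h)+\lambda c_2\psi_2(\bar\xi,\bar h)=0$ for all $\bar h$, i.e.\ iff $\bar\xi\in\ker_L(\psi_1+\nu(\lambda)\psi_2)$ with $\nu(\lambda):=\lambda c_2/c_1$. As $c_1,c_2\in\C^*$, the map $\nu\colon\C^*\to\C^*$ is a bijection and in particular injective. The main obstacle I expect is this last normalization step: one must verify $J\subset I_j$ (so that $\tau_j$ is defined on $S/I_j$), that $\tau_j$ is a genuine nonzero isomorphism onto $\C$, and that the two routes from $(S/J)_{(k+1)(d-2)}$ to $\C$ -- through $S/(I_1\cap I_2)$ for $\psi_j$ and directly through the socle of $S/J$ -- are identified up to the single scalar $c_j$. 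Once this bookkeeping is in place, the $\C$-linearity of $\gamma\mapsto f_\gamma$ closes the argument.
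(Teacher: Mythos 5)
Your proposal is correct, and its overall architecture (lift tangent spaces to degree-$d$ pieces of the associated ideals, then recognize $I^{(\lambda)}_d/(I_1\cap I_2)_d$ as the left kernel of a pencil $\psi_1+\nu\psi_2$) matches the paper's. Where you genuinely diverge is in how $\nu$ is produced and why it is injective. The paper works inside the two-dimensional space $(S/(I_1\cap I_2))_{(k+1)(d-2)}$: it takes a generator $g_\lambda$ of the one-dimensional image of $I^{(\lambda)}_{(k+1)(d-2)}$ there, sets $(a,b)=(\sigma_1(g_\lambda),\sigma_2(g_\lambda))$ and $\nu(\lambda)=-a/b$, and then has to argue separately that $b\neq 0$ and that $\lambda$ can be recovered from $(a,b)$ — this last step done by passing to the orthogonal complement of $I^{(\lambda)}_{(k+1)(d-2)}$ in $(S/J)_{(k+1)(d-2)}$, which is spanned by $f_{[Y_1]}+\lambda f_{[Y_2]}$. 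You instead exploit the $\C$-linearity of $\gamma\mapsto f_\gamma$ from the start and compare the socle map $\cdot f_{[Y_j]}$ with $\sigma_j$ via fixed scalars $c_j\in\C^*$, obtaining the closed formula $\nu(\lambda)=\lambda c_2/c_1$; injectivity (indeed bijectivity) and nonvanishing then come for free. In effect you have front-loaded the duality argument that the paper only deploys at the end, which makes the bookkeeping slightly heavier (one must check $J\subset I_j$ and that $\tau_j$ is a nonzero functional, both of which you correctly flag and which do hold, the latter because $f_{[Y_j]}\neq 0$ and the Macaulay pairing on $S/J$ is perfect) but yields a cleaner and more explicit conclusion.
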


\begin{proof} By construction, the codimension of $I^{(\lambda)}_{(k+1)(d-2)}$ in  $S_{(k+1)(d-2)}$ equals one. 
From Lemma~\ref{lemHP} it follows that $(S/(I_1\cap I_2))_{(k+1)(d-2)}$ is two-dimensional.
Moreover, $I_1\cap I_2\subset I^{(\lambda)}$, hence
 the image of $I^{(\lambda)}_{(k+1)(d-2)}$ in   $(S/(I_1\cap I_2))_{(k+1)(d-2)}$ is one-dimensional. Let $g_\lambda$ be a generator for this one-dimensional space. 

Consider now the multiplication map
\begin{eqnarray*}
\mu: (S/I_1\cap I_2)_d \times (S/I_1\cap I_2)_{kd-2k-2}& \to &(S/I_1\cap I_2)_{(k+1)(d-2)}\\& \stackrel{\sim}{\longrightarrow} &(S/I_1)_{(k+1)(d-2)}\oplus (S/I_2)_{(k+1)(d-2)}\\
&\stackrel{(\sigma_1,\sigma_2)}{\longrightarrow} & \C\oplus \C\end{eqnarray*}
where the first map is the natural multiplication map, which is independent of $\lambda$ or of any choice,  the second map is a natural isomorphism and the third map is $(\sigma_1,\sigma_2)$, i.e., which depends on our choices. Let $p_j:\C\oplus \C\to\C$ be the projection on the $j$-th factor.
By construction $\psi_j$ equals $p_j \circ \mu$.

Recall that $I^{(\lambda)}_d/(I_1\cap I_2)_d$ equals the left kernel of the bilinear map
\[ (S/I_1\cap I_2)_d \times (S/I_1\cap I_2)_{kd-2k-2} \to ((S/I_1\cap I_2)_{(k+1)(d-2)})/g_{\lambda}\]
Let $(a,b) =(\sigma_1(g_\lambda),\sigma_2(g_\lambda))$. Then 
\[
 I^{\lambda}_d/(I_1\cap I_2)_d=\ker_L(b\psi_1-a\psi_2).\]

We would like to set $\nu(\lambda)=-a/b$. In order to do this we need to show that $b$ is nonzero. Moreover, in order to prove our claim we need to show that $\nu$ is injective, i.e., that we can recover $\lambda$ from $(a,b)$. We start by proving the last part:
Let $W$ be the kernel of $S_{(k+1)(d-2)}\to (S/I_1\cap I_2)_{(k+1)(d-2)} \to \C\oplus \C \to \C $, where the second map is just multiplication by $(b,-a)^T$. Then $W=I_{(k+1)(d-2)}^{(\lambda)}$. Let $W'\subset (S/J)_{(k+1)(d-2)}$ be its orthogonal complement under the multiplication map
\[ S_{(k+1)(d-2)}\times (S/J)_{(k+1)(d-2)}\to (S/J)_{(2k+2)(d-2)}\cong \C\]
Then $W'$ is generated by $[Y_1]+\lambda [Y_2]_{\prim}$. Now $[Y_1]+\lambda[Y_2]=\tau([Y_1]+\lambda'[Y_2])$ if and only if $\tau =1$ and $\lambda=\lambda'$.

In particular, for $\lambda\neq 0$, we have that $ab\neq 0$; the map $\nu:\lambda\mapsto -a/b$ is injective and
\[ I^{(\lambda)}_d/(I_1\cap I_2)_d=\ker_L (\psi_1+\nu(\lambda)\psi_2).\] 
The equality
\[  T_X\NL([Y_1]+\lambda[Y_2])/T_X\NL([Y_1],[Y_2])= I^{(\lambda)}_d/(I_1\cap I_2)_d\]
follows directly from the discussion before this lemma.
\end{proof}

\begin{theorem}\label{thmTsp}
Suppose $X\subset \Ps^{2k+1}$ is a smooth hypersurface of degree $d$ such that $X$ contains two subvarieties $Y_1,Y_2$ of dimension $k$. Suppose that $[Y_1]_{\prim}$ and $[Y_2]_{\prim}$ are linearly independent in $H^{2k}(X,\Q)_{\prim}$.

Let $I_j$ be the ideal associated with the Hodge class $[Y_j]$, cf. Construction~\ref{conIdeal}.
Suppose that the left kernel of the multiplication map
\[ (I_1+I_2/I_2)_d\times (I_1+I_2/I_2)_{kd-2k-2}\to (S/I_2)_{(k+1)(d-2)}\]
is zero. Then for all but finitely many $\lambda \in\Q^*$  we have
\[ \codim T_X\NL([Y_1]+\lambda [Y_2])=\codim T_X \NL([Y_1],[Y_2])\]

Moreover, if for both $j\in\{1,2\}$  the left kernels of 
\[ (I_1+I_2/I_j)_d\times (I_1+I_2/I_j)_{kd-2k-2}\to (S/I_j)_{(k+1)(d-2)}\]
are zero and $d=kd-2k-2$, then there are at most $h_{I_1+I_2}(d)$ values of $\lambda\in \Q^*$ such that
\[ \codim T_X\NL([Y_1]+\lambda [Y_2])<\codim T_X \NL([Y_1],[Y_2])\]
\end{theorem}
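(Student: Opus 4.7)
The plan is to translate the theorem into the bilinear form framework of Section~\ref{secPrelim} using Lemma~\ref{lemmakey}, and then invoke Lemma~\ref{lemSpectrumSize}. Set $V=(S/(I_1\cap I_2))_d$ and $W=(S/(I_1\cap I_2))_{kd-2k-2}$, and let $\psi_1,\psi_2\colon V\times W\to\C$ be the two pairings produced in Lemma~\ref{lemmakey}. Put $V_j=\ker_L\psi_j=(I_j/(I_1\cap I_2))_d$ and $W_j=\ker_R\psi_j=(I_j/(I_1\cap I_2))_{kd-2k-2}$. Since $I_j/(I_1\cap I_2)$ injects into $S/(I_1\cap I_2)$, the standing assumptions $V_1\cap V_2=0$ and $W_1\cap W_2=0$ are automatic. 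By Gorenstein duality for $S/I_j$ (Remark~\ref{rmkSocDeg}), the pairing $\psi_j$ has rank $r_j=\dim(S/I_j)_d$, so $\dim V-r_j=\dim V_j$.

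For the first statement, the canonical identification $(I_1+I_2)/I_2\cong I_1/(I_1\cap I_2)$ shows that the hypothesized map is exactly the restriction $\psi_2|_{V_1\times W_1}$. Its left kernel vanishing means $s_1:=\rank\psi_2|_{V_1\times W_1}=\dim V_1=\dim V-r_1$, which is precisely the hypothesis of Lemma~\ref{lemSpectrumSize}. That lemma gives a bound of $\dim V-s_1-s_2$ on the number of nonzero $t\in\C^*$ with $\rank(\psi_1+t\psi_2)<\dim V$, which is in particular finite. By Lemma~\ref{lemmakey}, $T_X\NL([Y_1]+\lambda[Y_2])/T_X\NL([Y_1],[Y_2])$ is canonically isomorphic to $\ker_L(\psi_1+\nu(\lambda)\psi_2)$, and since $\nu\colon\C^*\to\C^*$ is injective, the set of exceptional $\lambda\in\Q^*$ is finite. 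The equality $\NL([Y_1]+\lambda[Y_2])=\NL([Y_1])\cap\NL([Y_2])$ then follows from the tangent space identification combined with Remark~\ref{rmkIns}, after using that for all but finitely many $\lambda$ the dimension matches $\NL([Y_1],[Y_2])$, which is contained in $\NL([Y_1]+\lambda[Y_2])$ tautologically.

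For the moreover part, the symmetric hypothesis with $j=1$ gives, by the same identification, $s_2=\dim V-r_2$. The assumption $d=kd-2k-2$ makes $\dim V=\dim W$, so Lemma~\ref{lemSpectrumSize} applies and bounds the exceptional set by
\[
\dim V - s_1 - s_2 = r_1+r_2-\dim V.
\]
Applying Lemma~\ref{lemHP} in the paper's convention $h_I(\alpha)=\dim(S/I)_\alpha$ gives
\[
r_1+r_2-\dim V = \dim(S/I_1)_d+\dim(S/I_2)_d-\dim(S/(I_1\cap I_2))_d = h_{I_1+I_2}(d),
\]
matching the bound in the statement. Injectivity of $\nu$ transfers the bound from $t$ to $\lambda$.

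The only step requiring care is the correct identification of the hypothesis with the zero-left-kernel condition on $\psi_j|_{V_i\times W_i}$ needed in Lemma~\ref{lemSpectrumSize}; the rest is an invocation of the bilinear form machinery from Section~\ref{secPrelim} together with the Hilbert function bookkeeping in Lemma~\ref{lemHP}.
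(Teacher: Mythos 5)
Your proposal is correct and follows essentially the same route as the paper's own proof: identify the quotient $T_X\NL([Y_1]+\lambda[Y_2])/T_X\NL([Y_1],[Y_2])$ with $\ker_L(\psi_1+\nu(\lambda)\psi_2)$ via Lemma~\ref{lemmakey}, recognize the hypothesis as $s_1=\dim V-r_1$ under the identification $(I_1+I_2)/I_2\cong I_1/(I_1\cap I_2)$, and apply Lemma~\ref{lemSpectrumSize} together with the Hilbert-function identity of Lemma~\ref{lemHP} to get the bound $h_{I_1+I_2}(d)$. Your added remarks (injectivity of $\nu$, Gorenstein duality giving $r_j=h_{I_j}(d)$, and $\dim V=\dim W$ when $d=kd-2k-2$) only make explicit steps the paper leaves implicit.
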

\begin{proof} Let $\lambda\in \Q^*$.
Let $I^{(\lambda)}=I([Y_1]+\lambda [Y_2])$.

From the previous lemma it follows that 
\[ T_X\NL([Y_1]+\lambda[Y_2])/T_X\NL([Y_1],[Y_2])\]
equals $\ker_L (\psi_1+\nu(\lambda)\psi_2)$.
The first statement now follows immediately.

From Lemma~\ref{lemSpectrumSize} it follows that for at most $\dim V-s_1-s_2$ nonzero values of $\lambda$ there is a rank drop. 

In our case $\dim \ker_L(\psi_j)=\dim V-r_j$. Since the left kernel is zero, the rank $s_i$ of $\varphi_i$  is maximal, hence $s_i=\dim V-r_j$. This implies that there are at most
\[ r_1+r_2-\dim V\]
exceptional values.
By construction $r_j=h_{I_j}(d)$ and $\dim V=h_{I_j\cap I_j}(d)$. Using Lemma~\ref{lemHP} we obtain
\[ r_1+r_2-\dim V=h_{I_1+I_2}(d).\]
\end{proof}

\begin{corollary} Using the notation of the previous theorem. Suppose, moreover, that $d=kd-2k-2$ and that $h_{I_1+I_2}(d)=0$ then for all $\lambda \in \Q^*$ we have 
\[ \codim T_X\NL([Y_1]+\lambda [Y_2])=\codim T_X \NL([Y_1],[Y_2]).\]
\end{corollary}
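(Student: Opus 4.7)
The plan is to reduce this corollary to a direct application of Lemma~\ref{lemmakey} combined with Lemma~\ref{lemHighDeg}, so that no separate finite exceptional set needs to be analysed.

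First, I would invoke Lemma~\ref{lemmakey} to identify the excess tangent dimension at $\lambda$ with a left kernel: for each $\lambda\in\Q^*$ one has
\[ T_X\NL([Y_1]+\lambda[Y_2])/T_X\NL([Y_1],[Y_2])=\ker_L(\psi_1+\nu(\lambda)\psi_2), \]
where $\psi_1,\psi_2$ are the two pairings
\[ (S/(I_1\cap I_2))_d\times (S/(I_1\cap I_2))_{kd-2k-2}\to (S/I_j)_{(k+1)(d-2)}\cong \C \]
and $\nu:\C^*\to\C^*$ is the injective reparametrization produced in that lemma. So establishing the corollary reduces to showing that $\ker_L(\psi_1+\mu\psi_2)=0$ for every $\mu\in\C^*$.

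Next I would apply Lemma~\ref{lemHighDeg} in the exactly matching situation. The socle degree is $t=(k+1)(d-2)$, and I set $\alpha=d$; the complementary degree is then $t-\alpha=(k+1)(d-2)-d=kd-2k-2$, which coincides with $d$ by the hypothesis $d=kd-2k-2$. Hence the assumption $h_{I_1+I_2}(d)=0$ of the corollary is precisely the hypothesis $h_{I_1+I_2}(t-\alpha)=0$ of Lemma~\ref{lemHighDeg}. That lemma therefore yields $\ker_L(\psi_1+\mu\psi_2)=0$ for all $\mu\in\C^*$.

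Combining the two steps, for every $\lambda\in\Q^*$ we have $\nu(\lambda)\in\C^*$ and hence $\ker_L(\psi_1+\nu(\lambda)\psi_2)=0$, so the quotient $T_X\NL([Y_1]+\lambda[Y_2])/T_X\NL([Y_1],[Y_2])$ vanishes and the codimensions agree. There is no genuine obstacle here once the degree arithmetic $t-\alpha=d$ is verified; the only subtlety is purely bookkeeping, namely checking that the pairings $\psi_j$ in Lemma~\ref{lemmakey} are literally the maps $\varphi_j$ of Lemma~\ref{lemHighDeg} (they are, since both factor through $(S/I_j)_t$ via the fixed isomorphisms $\sigma_j$), so that Lemma~\ref{lemHighDeg} may be applied without any further left-kernel hypothesis.
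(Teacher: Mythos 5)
Your argument is correct, but it follows a different route than the paper's own proof. The paper does not go back to Lemma~\ref{lemHighDeg}: it instead uses Lemma~\ref{lemKernelBnd} to bound the dimension of the left kernel of the restricted pairing $(I_1+I_2/I_j)_d\times (I_1+I_2/I_j)_{kd-2k-2}\to (S/I_j)_{(k+1)(d-2)}$ by $h_{I_1}(d)+h_{I_2}(d)-h_{I_1\cap I_2}(kd-2k-2)=h_{I_1+I_2}(d)=0$ (via Lemma~\ref{lemHP} and the hypothesis $d=kd-2k-2$), thereby verifying the hypothesis of the ``Moreover'' clause of Theorem~\ref{thmTsp}; that clause, which rests on the pencil-of-Gram-matrices count in Lemma~\ref{lemSpectrumSize}, then bounds the number of exceptional $\lambda$ by $h_{I_1+I_2}(d)=0$. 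You instead combine Lemma~\ref{lemmakey} with Lemma~\ref{lemHighDeg} directly: the identification $T_X\NL([Y_1]+\lambda[Y_2])/T_X\NL([Y_1],[Y_2])=\ker_L(\psi_1+\nu(\lambda)\psi_2)$ reduces everything to showing the left kernel vanishes for every nonzero parameter, and the degree bookkeeping $t-\alpha=(k+1)(d-2)-d=kd-2k-2=d$ shows that the corollary's hypothesis $h_{I_1+I_2}(d)=0$ is literally the hypothesis of Lemma~\ref{lemHighDeg}. Your route is arguably cleaner, since it bypasses both Lemma~\ref{lemKernelBnd} and the exceptional-value count entirely and produces ``no exceptional values'' in one stroke; the paper's route has the advantage of exhibiting the corollary as the boundary case of the quantitative bound in Theorem~\ref{thmTsp}. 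Both are valid, and your identification of $\psi_j$ with the $\varphi_j$ of Lemma~\ref{lemHighDeg} (same socle degree $t=(k+1)(d-2)$, same isomorphisms $\sigma_j$) is accurate.
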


\begin{proof}
Note that from Lemma~\ref{lemKernelBnd} it follows that the left kernel of the multiplication map
\[ (I_1+I_2/I_2)_d\times (I_1+I_2/I_2)_{kd-2k-2}\to (S/I_2)_{(k+1)(d-2)}\]
has dimension at most
\[ h_{I_1}(d)+h_{I_2}(d)-h_{I_1\cap I_2}(kd-2k-2).\]
Using $d=kd-2k-2$ and  Lemma~\ref{lemHP} it follows that the above equals
\[ h_{I_1+I_2}(d),\]
which is zero by assumption. Hence we may apply the previous result to conclude that 
\[\dim T_X\NL(\gamma_1+\lambda\gamma_2) = \dim T_X\NL(\gamma_1,\gamma_2) \] 
holds for all $\lambda \in \Q^*$.
\end{proof}

We will now show a more general result for complete intersection cycles.
Recall that the Hodge locus associated with a $k$-dimensional complete intersection is smooth and irreducible, see e.g., \cite{KloCI} and that we described its associated ideal in Example~\ref{exaIdealCI}.

\begin{theorem}\label{ThmExcGen} Fix an integer $c$ such that $1\leq c \leq k+1$ be an integer, let  $d_0,\dots, d_k, e_0,\dots,e_{c-1}$ be integers.
Let $Z\subset \Ps^{2k+1}$ be a complete intersection of multidegree $(d_0,\dots,d_k,e_0,\dots,e_{c-1})$, let $Y_1$ and $Y_2$ be complete intersections of multidegree $(d_0,\dots, d_k)$ and $(e_0,\dots,e_{c-1},d_c,\dots,d_k)$ respectively, such that their intersection equals $Z$. 
 If  
 \[ \sum_{i=0}^{c-1} (e_i+d_i)<(c-1)d\]
 then there is no excess tangent dimension for $[Y_1]+\lambda [Y_2]$ for all but finitely many $\lambda \in \Q^*$.
\end{theorem}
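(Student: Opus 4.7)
The plan is to apply Theorem~\ref{thmTsp}: it suffices to verify that the left kernel of the multiplication pairing
\[
(I_1+I_2/I_2)_d \times (I_1+I_2/I_2)_{kd-2k-2} \to (S/I_2)_{(k+1)(d-2)}
\]
vanishes, where $I_j:=I([Y_j])$ are the ideals associated with the two Hodge classes. The entire theorem then reduces to establishing this vanishing from the given numerical hypothesis.

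To that end I would first write the relevant ideals down explicitly. Let $g_0,\dots,g_k$ be a regular sequence cutting out $Y_1$, with $\deg g_i=d_i$, and let $h_0,\dots,h_{c-1}$ together with $g_c,\dots,g_k$ cut out $Y_2$, with $\deg h_i=e_i$. Expanding $f=\sum_{i,j=0}^{c-1}g_ih_jQ_{ij}+\sum_{i=c}^{k}g_iP_i$, so that $\deg P_i=d-d_i$, Section~\ref{subsecCII} identifies
\[
I_1+I_2=\langle g_0,\dots,g_{c-1},h_0,\dots,h_{c-1},g_c,\dots,g_k,P_c,\dots,P_k\rangle
\]
as a complete intersection in $S=\C[x_0,\dots,x_{2k+1}]$, with $2k+2$ generators in $2k+2$ variables. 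Consequently $S/(I_1+I_2)$ is Artinian Gorenstein of socle degree equal to the sum of the generator degrees minus $2k+2$, namely
\[
\sum_{i=0}^{c-1}(d_i+e_i)+(k-c+1)d-(2k+2).
\]

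The key step is then a one-line rearrangement: the hypothesis $\sum_{i=0}^{c-1}(d_i+e_i)<(c-1)d$ is equivalent to the socle degree above being strictly less than $kd-2k-2$. Hence $h_{I_1+I_2}(kd-2k-2)=0$, so $(I_1+I_2)_{kd-2k-2}=S_{kd-2k-2}$ and the inclusion $((I_1+I_2)/I_2)_{kd-2k-2}\hookrightarrow (S/I_2)_{kd-2k-2}$ is an equality. The pairing in Theorem~\ref{thmTsp} is therefore the restriction of the Gorenstein perfect pairing on $S/I_2$ (non-degenerate between the complementary degrees $d$ and $kd-2k-2$, which sum to the socle degree $(k+1)(d-2)$; cf.\ Remark~\ref{rmkSocDeg}) to the subspace $(I_1+I_2)/I_2$ on the left factor only, the right factor already filling all of $(S/I_2)_{kd-2k-2}$. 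Such a restriction stays left-nondegenerate, so its left kernel is zero, and Theorem~\ref{thmTsp} delivers the conclusion. The only delicate point I anticipate is the bookkeeping: aligning the indexing conventions of Section~\ref{subsecCII} with those of the theorem and computing the socle degree of $S/(I_1+I_2)$ correctly; no serious obstruction is expected beyond that.
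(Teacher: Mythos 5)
Your proposal is correct and follows essentially the same route as the paper: reduce to Theorem~\ref{thmTsp}, identify $I_1+I_2$ as a complete intersection via Subsection~\ref{subsecCII}, compute its socle degree, and observe that the hypothesis $\sum_{i=0}^{c-1}(e_i+d_i)<(c-1)d$ is exactly the condition that this socle degree is below $kd-2k-2$, so that $h_{I_1+I_2}(kd-2k-2)=0$ and the pairing has no left kernel. The only cosmetic difference is that you justify the vanishing of the left kernel directly from Gorenstein duality on $S/I_2$, whereas the paper invokes Lemma~\ref{lemHighDeg}; the underlying argument is the same.
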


\begin{proof}
 Let $I_j$ be ideal associated with the Hodge class $[Y_j]$. From Example~\ref{exaIdealCI} and the results in Subsection~\ref{subsecCII} it follows that $I_1+I_2$ is a complete intersection ideal with generators of degree $(e_0,\dots,e_{c-1},d_0,\dots,d_{c-1},d_c,\dots,d_k,d-d_c,\dots,d-d_k)$.
Hence the socle degree  of $S/I_1+I_2$ equals
\[(k-c+1)d-2k-2+ \sum_{i=0} ^{c-1}(e_i+d_i).\]
This is less than $kd-2k-2$ if and only if
 \[ \sum_{i=0}^{c-1} (e_i+d_i)<(c-1)d.\]
 Hence from Lemma~\ref{lemHighDeg} it follows that the multiplication map from Theorem~\ref{thmTsp} has no left kernel. From the same Theorem the result now follows.
\end{proof}

\begin{example}[Independence of $X$ for small $k$]\label{exaRestriction} 
Suppose now that both $\gamma_i$ are  classes of  $k$-planes, and that the intersection of these planes has dimension $k-c$.
In this case we  have that $I_1+I_2$ is generated by $k+1+c$ linear forms and $k+1-c$ forms of degree $(d-1)$. In particular $h_{I_1+I_2}(t)=0$ for $t>(k+1-c)(d-2)$.

Hence if
\[ kd-2k-2>(k+1-c)(d-2)\]
then there is no left kernel by Remark~\ref{remb01}. This is equivalent with
$(c-1)(d-2)>2$.

Since we assumed that $d\geq 2+\frac{2}{k}$ holds, we can only have excess tangent dimension for infinitely many choices of $\lambda$ if one of $c=1$; $(c-1)(d-2)=1$ or $(c-1)(d-2)=2$ holds.
In particular, if $(c-1)(d-2)=2$ then $h_{I_1+I_2}(kd-2k-2)=1$.

We specialize even further:
If we assume moreover that $d>kd-2k-2$ holds then $d>(k+c-1)(d-2)$. In particular, we have that
\[ \dim (S/I_1\cap I_2)_d>\dim (S/I_1\cap I_2)_{kd-2k-2}.\]
Therefore the pairing
\[  (S/I_1\cap I_2)_d\times (S/I_1\cap I_2)_{kd-2k-2} \to (S/I_1\cap I_2)_{(k+1)(d-2)}\to \C\]
has a left kernel for all $\lambda$ and therefore 
\[ T_X\NL([\Pi_1],[\Pi_2])\subsetneq T_X\NL([\Pi_1]+\lambda[\Pi_2])\]
for all $X\in \NL([\Pi_1],[\Pi_2])$.

This happens precisely when $k\leq \frac{d+1}{d-2}$, i.e., $d=3, k\in \{2,3,4\}$ and $d=4, k\in\{1,2\}$.
Hence for $d=3, k=2,3,4$ we have excess tangent dimension for all $\lambda$,  as for $d=4,k=1,2$ and all $\lambda$, and independently of $X$.

This is a generalization of Movasati's result for the case where $X$ is the Fermat hypersurface. \cite{MovConb}
\end{example}

\section{Movasati's conjecture}\label{secMov} 
In this section we will give  counterexamples to Movasati's conjecture. Movasati's conjecture is formulated for $d=3,c=3$ and $k\geq 3$. However, our counterexamples require  $k\geq 5$. More precisely,  we will show that for $d=3,c=3,k=5$ there is a cubic tenfold in $\Ps^{11}$ containing two $5$-planes intersecting in dimension 2, such that $T_X\NL([\Pi_1]+\lambda [\Pi_2])=T_X\NL([\Pi_1],[\Pi_2])$ holds for all but one $\lambda \in \Q^*$.
This implies that for $k=5$ Movasati's conjecture is false. We will then apply a dimension raise trick to give a counterexample for all $k\geq 6$. Along the same lines we will construct a counterexample for $d=4,c=2$ and $k\geq 3$ and for $d=3, c=2$ and $k\geq 5$.

Moreover, in  the next section we will show that the Hodge locus is reducible in a neighbourhood of the Fermat point, also explaining why there is a difference between the actual dimension and the dimension of the tangent space at the Fermat point, the latter calculated by Villaflor in \cite[Section 5.10]{MovConb}.

Let us consider a general hypersurface $X$ of degree $d$ in $\Ps^{2k+1}$ containing two $k$-planes intersecting in codimension $c$. Then after a coordinate change we may assume that $X=V(f)$ with
\[ f=\sum_{i=0}^ {c-1}\sum_{j=c}^{2c-1} x_ix_j Q_{ij}+\sum_{i=k+c+1}^{2k+1} x_i P_{i}.\]
In this case 
\[ I_1+I_2=\langle x_0,\dots,x_{2c-1},x_{k+c+1},\dots,x_{2k+1},P_{k+c+1},\dots,P_{2k+1}\rangle\]
and
\[I_2=\langle x_0,\dots,x_{c-1},(\sum_{j=c}^{2c-1} Q_{ij}x_j)_{i=0}^{c-1},x_{k+c+1},\dots,x_{2k+1},P_{k+c+1},\dots,P_{2k+1}\rangle\]

We will now calculate the left kernel of the pairing 
\[ ((I_1+I_2)/I_2)_d\times ((I_1+I_2)/I_2)_{kd-2k-2} \to (S/I_2)_{(d-2)(k+1)}\]
for  various explicit choices of $Q_{ij}$ and $P_i$.

\subsection{Fermat case}\label{ssFermat}
The first example is the Fermat hypersurface $X$ in $\Ps^{2k+1}$, i.e., $X=V(f)$ with
\[f= \sum_{i=0}^{2k+1}  x_i^d.\]

We follow the examples of \cite[Chapter 16]{MovBook}. Let
$\zeta$ a primitive $2d$-th root of unity. Let $\Pi_1$ be the $k$-plane  
\[ V(\{x_{2i}-\zeta x_{2i+1}\colon i=0,\dots,k\}).\]
Fix an integer $c$ such that $1\leq c\leq k+1$. For $i=0,\dots, c-1$ fix an odd integer $\alpha_i$ such that $3\leq \alpha_i\leq 2d-1$. Let $\alpha_i=1$ for $i=c,\dots,k$. Let $\Pi_2$ be the $k$-plane
\[ V(\{x_{2i}-\zeta^{\alpha_i} x_{2i+1}\colon i=0,\dots,k\}).\]
In the way $\Pi_1,\Pi_2\subset X$ and $\dim \Pi_1\cap \Pi_2=k-c$.

\begin{proposition} \label{prpFermat} Let $X,\Pi_1,\Pi_2$ as above. For $j=1,2$ let $I_j$ be the ideal associated with the Hodge class $\Pi_j$.
The dimension of the left kernel of the multiplication map
\[ ((I_1+I_2)/I_2)_d\times ((I_1+I_2)/I_2)_{kd-2k-2} \to (S/I_2)_{(d-2)(k+1)}\]
equals one if $(c-1)(d-2)=2$ and equals $k+1-c$ if $(c-1)(d-2)=1$.
\end{proposition}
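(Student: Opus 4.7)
My plan starts by writing the ideals $I_1,I_2$ explicitly via Example~\ref{exaIdealCI}. For each $i$ the factorisation
\[ x_{2i}^d + x_{2i+1}^d = (x_{2i}-\zeta x_{2i+1})\,P_i = (x_{2i}-\zeta^{\alpha_i}x_{2i+1})\,Q_i \]
in $\C[x_{2i},x_{2i+1}]$ lets me set $g_i=x_{2i}-\zeta x_{2i+1}$ and $h_i=x_{2i}-\zeta^{\alpha_i}x_{2i+1}$ and write $f=\sum_i g_iP_i=\sum_i h_iQ_i$. Moreover, for $i<c$ both $g_i$ and $h_i$ divide $x_{2i}^d+x_{2i+1}^d$, so $P_i=h_iR_i$ and $Q_i=g_iR_i$ with $R_i\in\C[x_{2i},x_{2i+1}]_{d-2}$. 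A quick check in each two-variable subring shows both sequences form complete intersections, so Example~\ref{exaIdealCI} gives
\[ I_1=\langle g_0,\dots,g_k,P_0,\dots,P_k\rangle,\qquad I_2=\langle h_0,\dots,h_k,Q_0,\dots,Q_k\rangle. \]

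Next I would trivialise $S/I_2$. Taking $z_i:=x_{2i+1}$ and using $h_i=0$ to eliminate $x_{2i}\equiv\zeta^{\alpha_i}z_i$, a direct substitution gives $Q_i\equiv d\,\zeta^{\alpha_i(d-1)}z_i^{d-1}\pmod{h_i}$, so
\[ S/I_2 \;\cong\; \C[z_0,\dots,z_k]/(z_0^{d-1},\dots,z_k^{d-1}), \]
a standard Artinian Gorenstein algebra of socle degree $(k+1)(d-2)$ with the usual Poincar\'e monomial pairing. Applying the same reductions to the generators of $I_1$: the linear forms $g_i$ for $i\geq c$ coincide with $h_i$ and vanish; for $i<c$ one has $g_i\equiv(\zeta^{\alpha_i}-\zeta)z_i$, a nonzero multiple of $z_i$; finally $P_i\in I_2$ for all $i$ (for $i<c$ because $P_i=h_iR_i$, and for $i\geq c$ because $P_i=Q_i$). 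Hence, as an ideal of $S/I_2$,
\[ (I_1+I_2)/I_2 \;=\; (z_0,z_1,\dots,z_{c-1}). \]

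Under these identifications, the map in the statement is just the restriction of the Poincar\'e pairing of $\C[z]/(z_i^{d-1})$ to the monomial ideal $(z_0,\dots,z_{c-1})$ in the complementary degrees $d$ and $(k+1)(d-2)-d=kd-2k-2$. A monomial $z^E$ of degree $d$ in $(z_0,\dots,z_{c-1})$ lies in the left kernel exactly when its Poincar\'e dual $z^{E^{\vee}}$, with $E^{\vee}_i=d-2-e_i$, falls \emph{outside} the ideal $(z_0,\dots,z_{c-1})$; equivalently, $e_i=d-2$ for every $i<c$. Therefore the left kernel has a monomial basis indexed by vectors $(e_c,\dots,e_k)\in\{0,\dots,d-2\}^{k+1-c}$ satisfying
\[ \sum_{i=c}^{k} e_i \;=\; d-c(d-2). \]

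Finally I would case-split on the value of $(c-1)(d-2)$. If $(c-1)(d-2)=2$ then $c(d-2)=d$, so the required sum is $0$ and only $(e_c,\dots,e_k)=(0,\dots,0)$ works, giving a $1$-dimensional kernel. If $(c-1)(d-2)=1$ (forcing $d=3$, $c=2$) then $c(d-2)=d-1$ and the sum must equal $1$; since each $e_i$ is at most $d-2=1$, the solutions are the $k+1-c$ standard basis vectors, giving a kernel of dimension $k+1-c$. The step most liable to cause friction is the verification that every cofactor $P_i$ reduces into $I_2$, so that no extra generator survives in $(I_1+I_2)/I_2$ at degree $d-1$; but the factorisation $P_i=h_iR_i$ for $i<c$ and the identity $P_i=Q_i$ for $i\geq c$ dispose of this cleanly.
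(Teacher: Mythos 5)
Your proposal is correct and follows essentially the same route as the paper: both reduce $S/I_2$ to the monomial complete intersection $\C[z_0,\dots,z_k]/(z_i^{d-1})$, identify $(I_1+I_2)/I_2$ with the ideal $(z_0,\dots,z_{c-1})$, and read off the left kernel from the diagonal monomial (Poincar\'e) pairing. The only difference is cosmetic: you enumerate the kernel monomials exactly via the duality criterion $e_i=d-2$ for $i<c$, whereas the paper exhibits explicit kernel elements for the lower bound and invokes Lemma~\ref{lemKernelBnd} together with $h_{I_1+I_2}(kd-2k-2)$ for the matching upper bound.
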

\begin{proof}
We take new coordinates such that
\begin{eqnarray*}
 y_{2i}=x_{2i}-\zeta x_{2i+1}&&\mbox{for }i=0,\dots,k+1;\\
 y_{2i+1}=x_{2i}-\zeta^{\alpha_i} x_{2i+1}&&\mbox{for }i=0,\dots,k-c;\\
 y_{2i+1}=x_{2i+1} &&\mbox{for }i=k-c+1,\dots,k+1.\end{eqnarray*}

In this new coordinates we have
\[ f= \sum_{i=0}^{k-c} y_{2i}y_{2i+1}g_i(y_{2i},y_{2i+1})+\sum_{i=k-c+1}^k y_{2i} h_i(y_{2i},y_{2i+1}).\]
Hence $I_1$ is generated by $y_{2i}$ and $y_{2i+1}g_i$ for $i=0,\dots,k-c$ and $ y_{2i}, h_i$ for $i=k-c+1,\dots,k+1$.
However, \[y_{2i+1}g_i(y_{2i},y_{2i+1})\equiv \beta y_{2i+1}^{d-1} \bmod y_{2i} \mbox{ for }i=0,\dots,k-c,\] for some nonzero $\beta \in \Q$.
A similar congruence holds for larger $i$. In particular,
\[ I_1=\langle y_{2i},y_{2i+1}^{d-1}\colon i=0,\dots, k\rangle.\]
Similarly, we find that
\[ I_2=\langle y_{2i+1},y_{2i}^{d-1}\colon i=0,\dots, k-c\rangle \cap \langle y_{2i},y_{2i+1}^{d-1}\colon i= k-c+1,\dots ,k\rangle.\]
In particular,
\[ I_1+I_2=\langle y_0,\dots, y_{2k-2c+1}\rangle \cap \langle y_{2i},y_{2i+1}^{d-1} \colon i=k-c+1,\dots, k\rangle .\]
After a permutation of coordinates we may assume that 
\[ I_1+I_2=\langle y_0,\dots,y_{2c-1}, y_{2c}^{d-1},\dots,y_{k+c}^{d-1},y_{k+c+1},\dots,y_{2k+1}\rangle\]
and
\[ I_2=\langle y_0^{d-1},\dots,y_{c-1}^{d-1},y_c,\dots,y_{2c-1},y_{2c}^{d-1},\dots,y_{k+c}^{d-1},\dots,y_{2k+1}^{d-1}\rangle \]
Since both ideals are monomial ideals, there is a monomial basis for $I_2+I_1/I_2$. This basis  consists of monomials $\prod_{i=0}^{2k+1} y_i^{a_i}$
such that 
\begin{enumerate}
\item there is at least one $0\leq i<c$ with $a_i>0$;
\item $a_i=0$ for $c\leq i<2c$ and $k+c+1\leq i \leq 2k+1$;
\item $a_i<d-1$ for $0\leq i \leq c-1$, $2c\leq i \leq k+c$.
\end{enumerate}

The topdegree $(S/I_2)_{(k+1)(d-2)}$ is generated by the single monomial
\[M=(y_0\dots y_{c-1} y_{2c}\dots y_{k+c})^{d-2}.\]
Let $N$ be a monomial contained in $(I_1+I_2)_d$ but not in $(I_2)_d$.
Let  $N'=M/N$. Then $N'$ is part of the monomial basis for $(S/I_2)_{kd-2k-2}$. Moreover, for any nonzero monomial $N''$ in $(S/I_2)_{kd-2k-2}$ we have that $N N''=0$ in $(S/I_2)_{(k+1)(d-2)}$ unless $N''\equiv N' \bmod I_2$.

From this it follows immediately that $N\in \ker_L$ if and only if $N'\not \in I_2$.
Consider now
\[N=(y_0\dots y_{c-1})^{d-2}\]
then this is a monomial of degree $c(d-2)$. 

Suppose first that $(c-1)(d-2)=2$ holds. Then $c(d-2)=d$. 
In this case we have $N'=\prod_{i=2c}^{k+c} y_i^{d-2}$ which is not $I_2$. 
In particular the left kernel has  dimension at least one.

If $(c-1)(d-2)=1$ then $c(d-2)=d-1$. In this case  we have that for $j=2c,\dots,k+c$ we have $y_jN$ is a monomial of degree $d$ and is part of our basis. For each such monomial we have that that $M/N' \not \in I_2$.
Hence the left kernel has dimension at least $k-c+1$.

The rank of the multiplication maps of $S/I_j$ equals $h_{I_j}(d)=h_{I_j}(kd-2k-2)$. From Lemma~\ref{lemKernelBnd} it follows that the left kernel has dimension at most
\[ h_{I_1\cap I_2}(kd-k-2)-h_{I_1}(kd-k-2)-h_{I_2}(kd-k-2),\]
which  equals $h_{I_1+I_2}(kd-k-2)$. If $(c-1)(d-2)=2$ the latter quantity is one, if $(c-1)(d-2)=1$ the latter quantity is $k+1-c$. 
\end{proof}

\begin{example}
The above lemma shows only that if  $(c-1)(d-2)\leq 2$ then we cannot apply Lemma~\ref{lemgenrk}. In order to conclude that $\dim T_X \NL([\Pi_1]+\lambda [\Pi_2])>\dim T_X\NL([\Pi_1],[\Pi_2])$ holds, i.e., to reprove \cite[Proposition 5.6]{MovConb},  we need to show that the  left kernel of  $\mu: (S/I_1\cap I_2)_d\times(S/I_1\cap I_2)_{kd-2k-2} \to (S/I_1\cap I_2)_{(k+1)(d-2)}\stackrel{\sigma}{\to} \C$ is nonzero, under variation of the map $\sigma:(S/I_1\cap I_2)_{(k+1)(d-2)}\to \C$.
However, since $I_1\cap I_2$ is a monomial ideal this is straightforward: % and we obtain that the left kernel has dimension one (if $(d-2)(c-1)=2$), respectively $k+1-c$ (if $(d-2)(c-1)=1$).
%\end{remark}

We continue to use the notation of the previous proof.
%We want to determine the left kernel of
%\[ (S/I_1\cap I_2)_d\times (S/I_1\cap I_2)_{kd-2k-2}\to (S/I_1\cap I_2)_{(k+1)(d-2)} \to \C\]
Let $M_1=(y_0\dots y_{c-1})^{d-2}$. 
Let $M_2=(y_c\dots y_{2c-1})^{d-2}$ and let $M=(y_{2c}\dots y_{k+c})^{d-2}$.
Then $\deg M M_1=\deg MM_2=(k+1)(d-2)$. Moreover, $MM_1$ and $MM_2$  form a basis for $(S/I_1\cap I_2)_{(d-2)(k+1)}$.

Fix now a quotient map $\sigma: (S/I_1\cap I_2)_{(k+1)(d-2)}\to \C$, such that $MM_1$ and $MM_2$ are not in the kernel of $\sigma$.
Let $\lambda \in \C^*$ such that $MM_2-\lambda MM_1$ is a generator of the kernel of $\sigma$.

 Let $N$ be a monomial of degree $2-(c-1)(d-2) \in \{0,1,2\}$, such that $N$ divides $M$.
 Then $\deg NM_1=\deg NM_2=d$. 
 
Using that  $I_1\cap I_2$ is a monomial ideal we find that if $N'\in  (S/I_1\cap I_2)_{kd-2k-2}$ is a monomial such that $N'(\lambda NM_2-NM_1)$ is nonzero in $S/(I_1\cap I_2)$ then $NN'$ divides $M$. In particular $N'=M/N$. However, then $\sigma (N'(\lambda NM_2-NM_1))=0$.

In particular  $\lambda NM_2-NM_1$ is in the left kernel of $\mu$. We need to show that this element is nonzero in $(S/I_1\cap I_2)_d$. Note that for $\{i,j\}=\{1,2\}$ we have that  $NM_i$ is nonzero in $S/I_i$ but is zero in $S/I_j$. Hence $\lambda NM_2-NM_1$ is nonzero in $S/(I_1\cap I_2)_d$. In particular, the left kernel is nonzero.
 \end{example}

\subsection{Counterexamples in low dimension}
The following three examples are the main building blocks for the counterexamples to Movasati's conjecture.
\begin{example}\label{ExaConA}
Take  $d=4,c=2,k=3$. 
Let $f$ be in the usual form
\[ \sum_{i=0}^1\sum_{j=2}^3 x_i x_j Q_{i,j} +\sum_{i=6}^7 x_i P_i.\]

Pick now the following $Q_{ij}\in \C[x_0,\dots,x_7]_2$, $P_i \in \C[x_0,\dots,x_7]_3$ such that  
\[
 Q_{02} =x_0^2+x_2^2+x_4^2;
 Q_{13} =x_1^2+x_3^2+x_5^2;
 P_6 =x_4(x_4^2+x_6^2);
 P_7 = x_5(x_5^2+x_7^2);
\]
and $Q_{12} =Q_{03}= 0$.
Then $X=V(f)$ is smooth.  Note that if we would replace $Q_{02}$ by $x_0^2+x_2^2$ and $Q_{13}$ by $x_1^2+x_3^2$ then we would obtain an hypersurface isomorphic to the Fermat cubic, hence we are considering a two-monomial deformation of a hypersurface isomorphic with the Fermat hypersurface. Moreover, $X$ admits an automorphism mapping $(x_0,x_1,x_2,x_3)\mapsto (x_2,x_3,x_0,x_1)$ and leaving the other variables invariant. This automorphism swaps both $3$-planes.
 
 Since $d=4,k=3$ we have   the equality $d=kd-2k-2=4$. In order to apply Theorem~\ref{thmTsp} we need to consider the symmetric bilinear form on $(S/I_1)_4$ given by multiplication. More  concretely, we want to calculate the left kernel of the  multiplication map $((I_1+I_2)/I_1)_4\times ((I_1+I_2)/I_1)_4 \to (S/I_1)_8$.
 
 The image of the ideal $I_1+I_2$ in $S/(x_0,x_1,x_6,x_7)$ is generated by $x_2,x_3,x_4^3,x_5^3$.
  Hence a $\C$-basis for $(S/I_1+I_2)_4$ is given by $\prod_i x_i^{a_i}$ such that $\sum a_i=4$ and one of $a_2>0,a_3>0,a_4>2,a_5>2$ holds.
  Hence the vector space $((I_1+I_2)/I_1)_4$ is generated by $\prod_{i=2}^5 x_i^{a_i}$ such that $a_2>0$ or $a_3>0$. Moreover, $a_4<3,a_5<3$ should hold.
 Using the congruences $x_2^3\equiv-x_2x_4^2$ and $x_3^3\equiv -x_3x_5^3$ we may assume that 
 $a_2<3$ and $a_3<3$. In this way we find  18 monomials which can be easily checked  to be linearly independent, i.e., their residue classes form a basis for $((I_1+I_2)/I_1)_4$.
    Similarly, one can show that the vector space $(S/I_2)_8$ is spanned by $M=(x_2x_3x_4x_5)^2$.
  
  The Gram matrix of this paring is a $18\times 18$ matrix, and is too large to include in this paper. One can determine  easily the nonzero entries by hand or by computer.
  For most monomials $N$ in our basis we have that $M$ and $M/N$  pair to $M$; that $M/N$ is also in our basis and $M/N$ is different from $N$. Moreover, for any other monomial $N'$ in our basis we have $NN'=0$. In this situation the pairing on $\{N,M/N\}$ has matrix
  \[ \begin{pmatrix} 0&1\\1&0\end{pmatrix}\]
  and the subspace generated by $N,M/N$ is orthogonal to the space generated by the 16 other monomial.
  
  We now list the monomials which are exceptions to this situation. The first exception is $N=x_2x_3x_4x_5$. In this case we have $N=M/N$ and that $N$ is orthogonal to all other monomials in our basis.
  
  The second exception is  $N=x_2^2x_3^2$. In this case $M/N\not \in I_2$. However the pairing of $N$ with itself is nonzero, as is the pairing with $x_3^2x_4^2$ and $x_2^2x_5^2$. 
   These three monomials generate a dimension 3 subspace of $(I_1+I_2/I_1)_4$ orthogonal to the 15 other monomials. One easily checks that  the Gram matrix on these monomials  has rank 3.
   
   The final two exceptions are the space generated by $\{N,M/N\}$ where $N$ is either  
  $x_2x_3^2x_4$ or
  $x_2^2x_3x_5$. These monomials have a nonzero pairing with themselves but are $M/N$ has zero self pairing, moreover $N,M/N$ are orthogonal to the other $16$ monomials. Here we  get for both $N$ that the pairing on $N,M/N$ is
\[\begin{pmatrix}
a& 1\\1& 0\end{pmatrix}.\]
In particular, the Gram matrix has full rank 18 and therefore there is no left kernel. For the computer code used see Appendix~\ref{secCode}.
\end{example}

\begin{example}\label{ExaConB}
We want also to present an example for $c=d=3$. The smallest possible value for $k=5$.

For $i=0,1,2$ let $Q_{i,i+3}=x_i+x_{i+3}+x_{i+6}$. For all $(i,j)$ such that $i\in \{0,1,2\}$, $j\in \{3,4,5\}$ let $Q_{i,j}=0$. For $i=9,10,11$ let $P_i=x_{i-3}(x_{i-3}+x_i)$.
\[ f=\sum_{i=0}^2 x_ix_{i+3}Q_{i,i+3}+\sum_{i=9}^{11} x_iP_i.\]
Then $X=V(f)$ is smooth. Moreover $X$ admits an automorphism mapping $(x_0,\dots,x_{11})\mapsto (x_3,x_4,x_5,x_0,x_1,x_2,x_6,\dots,x_{11})$, swapping both planes.

Similarly as above  one can determine the Gram matrix of the product map, and calculate its rank, which turns out to be maximal. For the computer code used see Appendix~\ref{secCode}.
\end{example}

\begin{example}\label{ExaConC} Our final example is an example with $c=2$,$d=3$ and $k=5$.
Here we take for $i=0,1$
\[ Q_{i,i+2}=x_i+ x_{i+2}+x_{i+4}+x_{i+6};\]
$Q_{0,3}\equiv 3x_3+x_5$ and $Q_{1,2}\equiv x_6+5x_7$.
For $i=8,9,10,11$ take
\[P_i=x_{i-4}(x_{i-4}+x_i)\]
and let
\[ f=\sum_{i=0}^1\sum_{j=2}^3 x_ix_jQ_{i,j}+\sum_{i=8}^{11} x_iP_i.\]
Then $X=V(f)$ is smooth and admits an automorphism swapping both planes. (The coefficient in front of $x_3$ in $Q_{0,3}$ and $x_7$ in $Q_{1,2}$ are needed to obtain a smooth hypersurface.)
Similarly as above,  one can determine the Gram matrix of the product map, and calculate its rank, which turns out to be maximal.
%Moreover, in this case we need also to consider the multiplication map
%\[ (I_1+I_2/I_2)_2\times (I_1+I_2/I_2)_4 \to (R/I_2)_6\]
%which turns out to have no left kernel.
For the computer code used see Appendix~\ref{secCode}.\end{example}

\begin{proposition}\label{prpIndBase} Suppose $d=3, c\in \{2,3\},k=5$ or $d=4,c=2,k=3$. Then there exists a hypersurface $X\subset\Ps^{2k+1}$ of degree $d$ containing two $k$-planes intersecting in codimension $c$. Such that for almost all $\lambda\in\Q$ we have
\[ \codim T_X\NL([\Pi_1]+\lambda [\Pi_2])=\codim T_X \NL([\Pi_1],[\Pi_2])\]
In particular, $\NL([\Pi_1]+\lambda[\Pi_2])=\NL([\Pi_1],[\Pi_2])$ in a neighborhood of $X$.

Moreover, if $(c-1)(d-2)=2$ then there is at most one $\lambda\in \Q^*$ such that 
\[ \codim T_X\NL([\Pi_1]+\lambda [\Pi_2])\neq\codim T_X \NL([\Pi_1],[\Pi_2]).\]
\end{proposition}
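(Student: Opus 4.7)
The plan is to deduce Proposition~\ref{prpIndBase} directly from Theorem~\ref{thmTsp}, using the three hypersurfaces $X$ constructed in Examples~\ref{ExaConA}, \ref{ExaConB}, \ref{ExaConC} as the witnesses for the three parameter triples $(d,c,k)\in\{(4,2,3),(3,3,5),(3,2,5)\}$. Thus the proof is essentially an aggregation step: each example already does the hard computational work, and the proposition's role is to package that data through the general tangent-space criterion.

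First I would observe that in each of the three examples the hypersurface $X$ is smooth, contains the two $k$-planes $\Pi_1,\Pi_2$ meeting in codimension $c$, and admits an explicit linear automorphism swapping $\Pi_1$ and $\Pi_2$. The swap is crucial because the examples only verify that the pairing
\[ ((I_1+I_2)/I_2)_d\times ((I_1+I_2)/I_2)_{kd-2k-2}\to (S/I_2)_{(k+1)(d-2)} \]
has no left kernel; pulling back along the automorphism gives the symmetric vanishing statement for $I_1$ in place of $I_2$. Hence the hypothesis of the first part of Theorem~\ref{thmTsp} is verified, and we conclude that for all but finitely many $\lambda\in\Q^*$, $\codim T_X\NL([\Pi_1]+\lambda[\Pi_2])=\codim T_X\NL([\Pi_1],[\Pi_2])$. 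The ``in particular'' assertion $\NL([\Pi_1]+\lambda[\Pi_2])=\NL([\Pi_1],[\Pi_2])$ in a neighborhood of $X$ then follows because $\NL([\Pi_1],[\Pi_2])$ is smooth at $X$ by Proposition~\ref{prpSmooth}, so matching tangent-space codimensions upgrades the obvious set-theoretic inclusion to equality.

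For the moreover clause I would identify the cases $(c-1)(d-2)=2$ as precisely $(d,c,k)=(4,2,3)$ and $(d,c,k)=(3,3,5)$. A direct arithmetic check gives $d=kd-2k-2$ in both, so the second half of Theorem~\ref{thmTsp} applies: the number of exceptional $\lambda\in\Q^*$ is bounded by $h_{I_1+I_2}(d)$. Now $I_1+I_2$ is a complete intersection with $k+c+1$ linear generators and $k-c+1$ generators of degree $d-1$; the Koszul resolution together with the Hilbert-function analysis carried out in Example~\ref{exaRestriction} yields $h_{I_1+I_2}(d)=h_{I_1+I_2}(kd-2k-2)=1$. Hence at most one exceptional $\lambda$ can occur.

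The genuine obstacle is the Gram-matrix computation underlying each of the three examples: one must pick a monomial basis for $((I_1+I_2)/I_j)_d$ and for $((I_1+I_2)/I_j)_{kd-2k-2}$, use the syzygies coming from $I_j$ to reduce products, and verify that the resulting square matrix — of size $18$ in Example~\ref{ExaConA}, and larger in Examples~\ref{ExaConB} and \ref{ExaConC} — has full rank. While the entries can in principle be tabulated by hand by exploiting the sparseness and symmetry of the $Q_{ij}$ and $P_i$ chosen there, in practice this verification is delegated to the computer-algebra scripts of Appendix~\ref{secCode}. Once these rank statements are accepted, the assembly above completes the proof.
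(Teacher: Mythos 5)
Your proposal is correct and follows essentially the same route as the paper: cite Examples~\ref{ExaConA}--\ref{ExaConC} for the vanishing of the left kernel, apply Theorem~\ref{thmTsp} together with the smoothness from Proposition~\ref{prpSmooth} to get equality of the loci, and use the plane-swapping automorphism plus $h_{I_1+I_2}(d)=1$ for the uniqueness of the exceptional $\lambda$ when $(c-1)(d-2)=2$. The only (harmless) difference is that you invoke the automorphism already for the first part, whereas the paper needs it only for the ``moreover'' clause.
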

\begin{proof}
Consider the previous three examples. For each $X$ we have that the left kernel of the pairing form Theorem~\ref{thmTsp} is zero. Hence by Theorem~\ref{thmTsp} we find
\[ \codim T_X\NL([\Pi_1]+\lambda [\Pi_2])=\codim T_X \NL([\Pi_1],[\Pi_2]).\]
for almost all $\lambda\in \Q^*$.
Since $\NL([\Pi_1],[\Pi_2])$ is smooth (Proposition~\ref{prpSmooth}) and we have the obvious inclusion $\NL([\Pi_1],[\Pi_2])\subset \NL([\Pi_1]+\lambda [\Pi_2])$ we obtain the two inequalities
\begin{eqnarray*} 
\dim \NL([\Pi_1],[\Pi_2])&\leq& \dim \NL([\Pi_1]+\lambda [\Pi_2]) \mbox{ and }\\
\dim \NL([\Pi_1],[\Pi_2]) &=&\dim T_X\NL([\Pi_1],[\Pi_2]) =\dim T_X\NL([\Pi_1]+\lambda [\Pi_2]) \\&\geq& \dim \NL([\Pi_1]+\lambda [\Pi_2]).\end{eqnarray*}
In particular, $\NL([\Pi_1],[\Pi_2])=\NL([\Pi_1]+\lambda [\Pi_2])$ for all but finitely many $\lambda \in \Q$.

Finally, note that all three examples admit an automorphism swapping both planes. This implies that also the left kernel of the second pairing is zero. From Theorem~\ref{thmTsp} it follows that for at most
$ h_{I_1+I_2}(d)$
nonzero values of $\lambda$ there is a strict inequality. Using $(c-1)(d-2)=2$  we find that $h_{I_1+I_2}(d)=1$. 
\end{proof}

The above proposition yields counterexamples to Movasati's conjecture for $k=5$ (when $d=3$, $c=2,3$) and $k=2$ (when $d=4,c=2$).
To provide counterexample in higher dimension we apply the following construction:
Let $X=V(f)$ be a smooth hypersurface of degree $d$ in $\Ps^{2k+1}$ containing two $k$-dimensional subvarieties $Y_1,Y_2$ with associated classes $\gamma_1,\gamma_2$ in cohomology. 
Consider $\tilde{f}=f+x_{2k+3}^d+x_{2k+3}x_{2k+2}^{d-1}$. 
And consider $\tilde{X}=V(\tilde{f})\subset\Ps^{2k+3}$. Then the cone $C(Y_j)$ over  $Y_j$ in $V(x_{2k+3})\cong \Ps^{2k+2}$ is contained in $\tilde{X}\cap V(x_{2k+3})$ and therefore in $\tilde{X}$. In this way we obtained two $k+1$ dimensional subvarieties $\tilde{Y}_1,\tilde{Y}_2$ of $\tilde{X}$, with associated classes $\tilde{\gamma_i}$.
Let $\tilde{S}=\C[x_0,\dots,x_{2k+3}]$ and let $S=\C[x_0,\dots,x_{2k+1}]$ as before.

\begin{proposition} \label{prpDimTrick}
Let $0\leq \delta\leq d$ be an integer. 
The dimension of the left kernel of the multiplication map
\[  (I(\tilde{\gamma}_1)+I(\tilde{\gamma}_2)/I(\tilde{\gamma}_2))_{d-\delta} \times (I(\tilde{\gamma}_1)+I(\tilde{\gamma}_2)/I(\tilde{\gamma}_2))_{(k+1)d-2k-4+\delta} \to (\tilde{S}/I(\tilde{\gamma}_2))_{(k+2)(d-2)} \]
equals the sum of the dimensions of the kernel of 
\[ ( I(\gamma_1)+I(\gamma_2)/I(\gamma_2))_{d-\delta'} \times (I(\gamma_1)+I(\gamma_2)/I(\gamma_2))_{kd-2k-2+\delta'} \to (S/I(\gamma_2))_{(k+1)(d-2)},\]
where $\delta'$ runs over the integers such that $\delta\leq \delta'\leq 2-(c-1)(d-2)$.
\end{proposition}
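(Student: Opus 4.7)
The plan is to exploit the simple algebraic structure of the Artinian Gorenstein rings associated with the cones $\tilde{Y}_j$. First I would identify $I(\tilde\gamma_j)$ explicitly. Writing $f=\sum_i g_i h_i$ with $g_0,\ldots,g_k$ generators of $I(Y_j)$ as in Example~\ref{exaIdealCI}, one has $\tilde f=\sum_i g_i h_i+x_{2k+3}(x_{2k+2}^{d-1}+x_{2k+3}^{d-1})$, a decomposition adapted to the complete-intersection generators $g_0,\ldots,g_k,x_{2k+3}$ of the cone $\tilde Y_j$. Applying the same construction yields
\[
I(\tilde\gamma_j)=I(\gamma_j)\cdot\tilde S+\langle x_{2k+3},\,x_{2k+2}^{d-1}\rangle,
\]
and hence an isomorphism of graded Artinian Gorenstein algebras $\tilde A_j:=\tilde S/I(\tilde\gamma_j)\cong A_j[t]/(t^{d-1})$ where $t=x_{2k+2}$ and $A_j:=S/I(\gamma_j)$, of socle degree $(k+2)(d-2)$, with socle $(A_j)_{(k+1)(d-2)}\cdot t^{d-2}$. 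Since the extra generators $x_{2k+3},x_{2k+2}^{d-1}$ of $I(\tilde\gamma_1)$ already lie in $I(\tilde\gamma_2)$, the ideal $\tilde K:=(I(\tilde\gamma_1)+I(\tilde\gamma_2))/I(\tilde\gamma_2)$ equals $K\cdot\tilde A_2$ where $K:=(I(\gamma_1)+I(\gamma_2))/I(\gamma_2)\subset A_2$, and $(\tilde K)_e=\bigoplus_{a=0}^{d-2}K_{e-a}\,t^a$ as graded vector spaces.

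Next, set $M=d-\delta$ and $N=(k+1)d-2k-4+\delta$, so $M+N=(k+2)(d-2)$. For $u=\sum_a u_a t^a\in(\tilde K)_M$ and $v=\sum_b v_b t^b\in(\tilde K)_N$, the contribution $u_a v_b t^{a+b}$ to the socle of $\tilde A_2$ vanishes unless $a+b=d-2$: indeed, $a+b\geq d-1$ forces $t^{a+b}=0$, while $a+b<d-2$ places $u_a v_b$ in $(A_2)_{M+N-(a+b)}$ at degree exceeding the socle degree $(k+1)(d-2)$ of $A_2$. Thus $\tilde\mu(u,v)=\bigl(\sum_{a+b=d-2} u_a v_b\bigr)\,t^{d-2}$, and because the slots $v_b\in K_{N-b}$ can be varied independently, $u\in\ker_L\tilde\mu$ iff for every $a\in\{0,\ldots,d-2\}$ the component $u_a$ lies in the left kernel of
\[
\mu_a\colon K_{M-a}\times K_{N-(d-2-a)}\to(A_2)_{(k+1)(d-2)}.
\]
Setting $\delta':=\delta+a$, the degree pair $(M-a,\,N-(d-2-a))$ becomes $(d-\delta',\,kd-2k-2+\delta')$, matching the pairing in the statement, so
\[
\dim\ker_L\tilde\mu=\sum_{\delta'=\delta}^{\delta+d-2}\dim\ker_L\mu_{\delta'}.
\]

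Finally, I would show the summands with $\delta'>2-(c-1)(d-2)$ vanish, which truncates the sum to the range in the statement. A direct calculation gives the identity $2-(c-1)(d-2)=(k+1-c)(d-2)-(kd-2k-2)$, so $\delta'>2-(c-1)(d-2)$ is equivalent to $kd-2k-2+\delta'>(k+1-c)(d-2)$; and $(k+1-c)(d-2)$ is the socle degree of the complete-intersection ring $S/(I(\gamma_1)+I(\gamma_2))$ in the $k$-plane setting of Subsection~\ref{subsecCII}. Consequently $(I(\gamma_1)+I(\gamma_2))_{kd-2k-2+\delta'}=S_{kd-2k-2+\delta'}$, so $K_{kd-2k-2+\delta'}=(A_2)_{kd-2k-2+\delta'}$, and $\mu_{\delta'}$ becomes the restriction to $K_{d-\delta'}$ of the perfect Gorenstein duality pairing on $A_2$, hence has trivial left kernel. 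The main obstacle is the structural identification $\tilde A_j\cong A_j[t]/(t^{d-1})$ in the first step; once that is in place, the remainder is a clean bookkeeping argument in the graded algebra.
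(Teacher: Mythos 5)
Your proof is correct and follows essentially the same route as the paper: the same identification $I(\tilde\gamma_j)=\tilde S\, I(\gamma_j)+\langle x_{2k+3},x_{2k+2}^{d-1}\rangle$, the same decomposition into powers of $x_{2k+2}$ with only the components $a+b=d-2$ surviving, and the same truncation of the sum via the socle degree $(k+1-c)(d-2)$ of $S/(I(\gamma_1)+I(\gamma_2))$. Your final step is, if anything, slightly cleaner than the paper's (which cites Lemma~\ref{lemHighDeg}), since you argue the vanishing directly from $K_{kd-2k-2+\delta'}=(A_2)_{kd-2k-2+\delta'}$ and Gorenstein duality on $A_2$.
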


\begin{proof}
Recall that for $j=1,2$ the cycle $\tilde{\gamma_j}$ is a complete intersection cycle. From Example~\ref{exaIdealCI} it follows immediately that
\[\tilde{I_i}:= I(\tilde{\gamma_i})=\tilde{S} I(\gamma_i)+\langle x_{2k+2}^{d-1},x_{2k+3}\rangle.\]
In particular,
\[ (\tilde{S}/\tilde{I_j})_r =\oplus_{i=0}^{d-2} x_{2k+2}^i (S/I(\gamma_j))_{r-t}\]
and similar decompositions holds for $\tilde{S}/(\tilde{I_1}\cap \tilde{I_2})$ and $\tilde{S}/(\tilde{I_1}+\tilde{I_2})$.

The multiplication map
\[ (\tilde{S}/\tilde{I}_2)_{d-\delta} \times (\tilde{S}/\tilde{I}_2)_{(k+1)d-2(k+1)-2+\delta}\to (\tilde{S}/\tilde{I}_2)_{(k+2)(d-2)}\]
can be decomposed as follows: Let $R=S/I_2$. We can write the previous map as the sum over $\alpha,\beta$ of 
\[ x_{2k+2}^\alpha R_{d-\delta-\alpha} \times x_{2k+2}^\beta R_{(k+1)(d-2)-2-\beta+\delta}\to x_{2k+2}^{\alpha+\beta} R_{(k+2)(d-2)-\alpha-\beta}.\]
The right hand side is zero if $\alpha+\beta\neq d-2$: If $\alpha+\beta>d-2$ then $x_{2k+2}^{\alpha+\beta} =0$ in $\tilde{S}/\tilde{I_2}$, if $\alpha+\beta<d-2$ then $R_{(k+2)(d-2)-\alpha-\beta}=0$, since $R$ is the Artinian Gorenstein algebra associated to a Hodge class (cf. Remark~\ref{rmkSocDeg}).
Hence the multiplication map simplifies to 
\[\oplus_{\alpha\geq 0} x_{2k+2}^\alpha R_{d-\delta-\alpha} \times x_{2k+2}^{d-2-\alpha} R_{kd-2(k+1)+\alpha+\delta}\to x_{2k+2}^{d-2} R_{(k+1)(d-2)}\]
For the rest of the prove we take $\alpha=\delta'-\delta$. Then $\delta'\geq \delta$ since $\alpha\geq 0$.

We want to determine the left kernel when the pairing is restricted to the image of 
\[x_{2k+2}^{\delta'-\delta} (I(\gamma_1))_{d-\delta'}\times x_{2k+2}^{d-2-\delta'+\delta} (I(\gamma_1))_{kd-2(k+1)+\delta'}.\]
By Lemma~\ref{lemHighDeg} there is no left kernel if
\[ h_{I(\gamma_1)+I(\gamma_2)}(kd-2(k+1)+\delta')=0.\]
By Example~\ref{exaCI} this happens if and only if
\[ kd-2(k+1)+\delta'>(k+1-c)(d-2).\]
This latter inequality simplifies to $\delta'>2-(c-1)(d-2)$.
The result is now immediate.
\end{proof}

\begin{remark} For most choices of $c,d,\delta$ we sum over the empty set, since $\delta>2-(c-1)(c-2)$ and therefore there is no left kernel.
\end{remark}

\begin{theorem}\label{mainThm} Suppose $d=3, c\in \{2,3\} ,k\geq 5$ or $d=4,c=2,k\geq 3$. Then there exists a hypersurface $X\subset\Ps^{2k+1}$ of degree $d$ containing two $k$-planes intersecting in codimension $c$, such that for almost all $\lambda\in\Q$ we have
\[ \codim T_X\NL([\Pi_1]+\lambda [\Pi_2])=\codim T_X \NL([\Pi_1],[\Pi_2])\]
In particular, $\NL([\Pi_1]+\lambda[\Pi_2])=\NL([\Pi_1],[\Pi_2])$ in a neighborhood of $X$.

Moreover, if $(c-1)(d-2)=2$ then there is at most one nonzero $\lambda\in \Q$ such that 
\[\NL([\Pi_1]+\lambda[\Pi_2])\neq \NL([\Pi_1],[\Pi_2])\] in a neighborhood of $X$.
\end{theorem}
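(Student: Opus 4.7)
The plan is to induct on $k$. Proposition~\ref{prpIndBase} is the base case: it supplies the desired smooth $X\subset \Ps^{2k+1}$ for $k=5$ when $d=3, c\in\{2,3\}$, and for $k=3$ when $d=4, c=2$. For the inductive step I will lift $X$ by one dimension via the cone construction of Proposition~\ref{prpDimTrick}: set $\tilde f=f+x_{2k+3}^{d}+x_{2k+3}x_{2k+2}^{d-1}$ and $\tilde X=V(\tilde f)\subset \Ps^{2k+3}$, and let $\tilde\Pi_j$ be the cone over $\Pi_j$ inside $V(x_{2k+3})$. These are $(k+1)$-planes on the smooth hypersurface $\tilde X$ meeting in codimension $c$; smoothness of $\tilde X$ is immediate from that of $X$ and the choice of the extra terms.

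The core input required for Theorem~\ref{thmTsp} on $\tilde X$ is the vanishing of the left kernel of $((\tilde I_1+\tilde I_2)/\tilde I_2)_d\times ((\tilde I_1+\tilde I_2)/\tilde I_2)_{(k+1)d-2k-4}\to (\tilde S/\tilde I_2)_{(k+2)(d-2)}$. Proposition~\ref{prpDimTrick} applied with $\delta=0$ identifies this kernel with the direct sum, over $\delta'\in\{0,1,\dots,2-(c-1)(d-2)\}$, of the left kernels of the pairings $((I_1+I_2)/I_2)_{d-\delta'}\times ((I_1+I_2)/I_2)_{kd-2k-2+\delta'}\to (S/I_2)_{(k+1)(d-2)}$ on $X$. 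The $\delta'=0$ summand vanishes by the inductive hypothesis. When $(c-1)(d-2)=2$, i.e.\ $(d,c)\in\{(4,2),(3,3)\}$, this is the only summand and the step is complete. When $(c-1)(d-2)=1$, i.e.\ $(d,c)=(3,2)$, the extra summand is $\delta'=1$ (corresponding to $\alpha=d-1$); its vanishing follows by the contrapositive of Lemma~\ref{lemDegShift} applied to the Artinian Gorenstein algebra $S/I_2$ (cf.\ Remark~\ref{rmkSocDeg}), since vanishing of $\ker_L\mu_{\alpha+1}$ forces vanishing of $\ker_L\mu_\alpha$.

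Hence Theorem~\ref{thmTsp} gives $\codim T_{\tilde X}\NL([\tilde\Pi_1]+\lambda[\tilde\Pi_2])=\codim T_{\tilde X}\NL([\tilde\Pi_1],[\tilde\Pi_2])$ for all but finitely many $\lambda\in\Q^*$. Combining this with Proposition~\ref{prpSmooth}, which yields smoothness of $\NL([\tilde\Pi_1],[\tilde\Pi_2])$ at $\tilde X$, and the obvious inclusion $\NL([\tilde\Pi_1],[\tilde\Pi_2])\subseteq \NL([\tilde\Pi_1]+\lambda[\tilde\Pi_2])$, I upgrade the tangent-space equality to equality of the Hodge loci in a neighborhood of $\tilde X$, exactly as in the proof of Proposition~\ref{prpIndBase}.

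For the moreover part when $(c-1)(d-2)=2$, the automorphism $\sigma$ of $X$ swapping $\Pi_1$ and $\Pi_2$ (supplied by Examples~\ref{ExaConA} and~\ref{ExaConB}) extends to $\tilde X$ by the identity on $x_{2k+2}$ and $x_{2k+3}$, and swaps $\tilde\Pi_1$ with $\tilde\Pi_2$; the argument above then also gives vanishing of the ``symmetric'' left kernel required for the second half of Theorem~\ref{thmTsp}. The main obstacle is that in the lifted situation the degree identity $d=kd-2k-2$ used there no longer holds, so one cannot directly invoke the bound $h_{\tilde I_1+\tilde I_2}(d)=1$. To handle this, I plan to use the grading by $x_{2k+2}$-degree from the proof of Proposition~\ref{prpDimTrick}: the Gram matrix of the lifted pairing decomposes into blocks indexed by $(\alpha,\beta)$ with $\alpha+\beta=d-2$, each built out of the corresponding base-case pairing, and the determinantal polynomial in $\lambda$ controlling rank drop factors accordingly. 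Making this factorization precise and showing that no new exceptional value of $\lambda$ is introduced beyond the unique one supplied by the base case is the technical heart of the moreover part, and is the step I expect to be the principal obstacle.
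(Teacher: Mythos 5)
Your treatment of the first statement is correct and follows the paper's own route: induction on $k$ with Proposition~\ref{prpIndBase} as base case, Proposition~\ref{prpDimTrick} as the dimension-raising step, and Lemma~\ref{lemDegShift} to dispose of the extra $\delta'=1$ summand in the case $(d,c)=(3,2)$. The only cosmetic difference is that the paper strengthens the induction hypothesis up front to ``no left kernel for all $\delta$'' (invoking Lemma~\ref{lemDegShift} once, before the induction), whereas you re-derive the $\delta'=1$ vanishing inside each inductive step from the $\delta'=0$ case; these are equivalent.

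The Moreover part, however, is left genuinely open in your proposal: you correctly identify the decomposition of the lifted pairing into blocks graded by the power of $x_{2k+2}$, but you defer the key point --- that no new exceptional value of $\lambda$ appears --- as ``the principal obstacle.'' The paper closes this in one line with Lemma~\ref{lemHighDeg}. The summand indexed by $\alpha$ pairs $(S/I_1\cap I_2)_{d-\alpha}$ against $(S/I_1\cap I_2)_{kd-2k-2+\alpha}$, and when $(c-1)(d-2)=2$ one has $kd-2k-2=(k+1-c)(d-2)$, the socle degree of $S/(I_1+I_2)$; hence $h_{I_1+I_2}(kd-2k-2+\alpha)=0$ for every $\alpha>0$, and Lemma~\ref{lemHighDeg} shows that each such summand of $\psi_1+\nu(\lambda)\psi_2$ has zero left kernel for \emph{every} nonzero $\lambda$. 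Only the $\alpha=0$ block can produce a rank drop, and by the inductive hypothesis that block admits at most one exceptional nonzero $\lambda$, so the count does not increase under the cone construction. Note also that the paper does not re-invoke the second half of Theorem~\ref{thmTsp} at the higher level, so your concern about the failure of the identity $d=kd-2k-2$ (and about extending the plane-swapping automorphism) is sidestepped entirely: the bound on the number of exceptional $\lambda$ is propagated directly through the block decomposition rather than re-proved from the hypotheses of Theorem~\ref{thmTsp}.
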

\begin{proof}
We start with the first statement. 
We are going to prove by induction on $k$ that there exists an example such that
 the multiplication map
\[  (I({\gamma}_1)+I({\gamma}_2)/I({\gamma}_2))_{d-\delta} \times (I({\gamma}_1)+I({\gamma}_2)/I({\gamma}_2))_{kd-2k-2+\delta} \to ({S}/I({\gamma}_2))_{(k+1)(d-2)} \]
has no left kernel for all $\delta$.

Note that if the hypothesis holds for $\delta=0$ and fixed $k$ then it holds for fixed $k$ and all $\delta$ such $0\leq \delta \leq d$ by Lemma~\ref{lemDegShift}. Hence we may restrict to $\delta=0$.
%Note that if $\delta$ is such that $h_{I(\tilde{\gamma}_1)+I(\tilde{\gamma}_2)}((k+1)d-2k-2+\delta)=0$ then there is no left kernel by Lemma~\ref{lemHighDeg} and we are done. 
%
%Hence if $(c-1)(d-2)=0$ we may restrict to $\delta=0$, if $(c-1)(d-2)=1$ then we need to consider only $\delta=0,1$.
The base case for the induction follows from Examples~\ref{ExaConA} through~\ref{ExaConC}, whereas the induction step is Proposition~\ref{prpDimTrick}.
Using Theorem~\ref{thmTsp} and Proposition~\ref{prpSmooth} we obtain the following inequalities for almost all $\lambda\in \Q^*$:
\begin{eqnarray*} 
\dim \NL([\Pi_1],[\Pi_2])&\leq& \dim \NL([\Pi_1]+\lambda [\Pi_2]) \mbox{ and }\\
\dim \NL([\Pi_1],[\Pi_2]) &=&\dim T_X\NL([\Pi_1],[\Pi_2]) =\dim T_X\NL([\Pi_1]+\lambda [\Pi_2]) \\&\geq& \dim \NL([\Pi_1]+\lambda [\Pi_2]).\end{eqnarray*}
In particular, $\NL([\Pi_1],[\Pi_2])=\NL([\Pi_1]+\lambda [\Pi_2])$ for all but finitely many $\lambda \in \Q$.

To prove the Moreover part, note that we make here the extra assumption  $(c-1)(d-2)=2$. We will proceed by induction, and the base case follows again from Examples~\ref{ExaConA} through~\ref{ExaConC} together with  Proposition~\ref{prpIndBase}. For the induction step, recall that we are considering a family of multiplication maps
\[ \varphi_\lambda: S/(I_1\cap I_2)_d\times S/(I_1\cap I_2)_{(k+1)d-2k-4} \to S/(I_1\cap I_2)_{(k-1)(d-1)} \to \C\]
where the left hand side  can be decomposed as a direct sum 
\[ S/(I_1\cap I_2)_{d-\alpha} \times S/(I_1\cap I_2)_{kd-2k-2+\alpha} \to S/(I_1\cap I_2)_{(k-2)(d-1)} \to \C\]
If $\alpha$ is nonzero then this summand has full rank for every nonzero value of $\lambda$ by Lemma~\ref{lemHighDeg}. Hence only the summand for which $\alpha=0$ can contribute to a rank drop.
In particular the number of $\lambda$, where the rank drops, does not increase. 
\end{proof}

\section{Split case}\label{secSplit}
In this section we will give a geometric explanation for the increase of the dimension of the tangent space at the Fermat point. More precisely, we will show that along a small codimension locus of $\NL([\Pi_1],[\Pi_2])$ the locus $\NL([\Pi_1]+\lambda [\Pi_2])$ is reducible for almost all $\lambda$, and that $\NL([\Pi_1],[\Pi_2])$ is the largest dimensional component.

\begin{definition}
Let $X\subset \Ps^{2k+1}$ be a hypersurface containing two $k$-planes intersecting in an $m$-plane. Let $c=k-m$.

We say that $X$ is \emph{split of codimension $c$} if there exist a coordinate transformation of $\Ps^{2k+1}$ such that $X$ is the zero set of $f\in \C[x_0,\dots,x_{2k+1}]$ of the form
\[ f=\sum_{i=0}^{c-1}\sum_{j=c}^{2c-1} x_i x_j Q_{ij}+\sum_{j=k+2+c}^{2k+1} x_jP_j\]
and $Q_{ij}\in \C[x_0,\dots,x_{2c-1}]$ for $0\leq i \leq c-1, c\leq j \leq 2c-1$.
\end{definition}

\begin{remark} If $X$ contains  two $k$-planes intersecting in an $(k-c)$-plane then we can always find a change of coordinates such that $X=V(f)$, with 
\[ f=\sum_{i=0}^{c-1}\sum_{j=c}^{2c-1} x_i x_j Q_{ij}+\sum_{j=k+2+c}^{2k+1} x_jP_j\]
and $Q_{ij}\in \C[x_0,\dots,x_{k+c+1}]$. In order to be of split type requires the $Q_{ij}$ to be contained in the subring  $\C[x_0,\dots,x_{2c-1}]$.

However, our notion of split type is weaker than the notion needed to apply the Thom-Sebastiani theorem, where one additionally requires  that $P_j$ is in the smaller ring $ \C[x_{2c},\dots,x_{2k+1}]$.

From the description of the Fermat hypersurface in Subsection~\ref{ssFermat} it follows that the Fermat hypersurface is of split type.
\end{remark}

\begin{theorem}\label{thmSplit}
Let $k$ and $d$ be positive integers, such that $d\geq 2+\frac{2}{k+1}$. Let $c$ be an integer $1\leq c \leq k+1$. 
Let $X\subset \Ps^{2k+1}$ be a smooth hypersurface of degree $d$ containing two linear spaces  $\Pi_1,\Pi_2$ of dimension $k$, intersecting in dimension $k-c$ and of split type. 
Suppose $d=3$ and $c=3$ or $d=4$ and $c=2$. Then for all $\lambda\in \Q^*$ we have that $\NL([\Pi_1],[\Pi_2])\subsetneq \NL([\Pi_1]+\lambda[\Pi_2])_{\red}$ in a neighbourhood of $X$.
Moreover, if $d=3$ and $k\geq 5$ or $d=4$ and $k\geq 3$ then $\NL([\Pi_1]+\lambda[\Pi_2])$ is reducible in a neighborhood of $X$.
\end{theorem}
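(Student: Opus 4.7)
The plan is to combine a tangent-space computation parallel to Proposition~\ref{prpFermat} with an explicit geometric lifting argument exploiting the split structure, and then deduce reducibility from Theorem~\ref{mainThm}.

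First, I would repeat the monomial analysis of Proposition~\ref{prpFermat} in the split setting. The hypothesis $Q_{ij}\in \C[x_0,\dots,x_{2c-1}]$ makes the ideals $I_1,I_2$ decompose into an inner piece (in the first $2c$ variables) and an outer piece exactly as in the Fermat case, and the monomial $(x_0\cdots x_{c-1})^{d-2}$, of degree $c(d-2)=d$ under $(c-1)(d-2)=2$, still produces a nonzero element in the left kernel of the pairing
\[
(S/I_1\cap I_2)_d\times (S/I_1\cap I_2)_{kd-2k-2}\to (S/I_j)_{(k+1)(d-2)}
\]
from Lemma~\ref{lemmakey}. Invoking that lemma yields the strict tangent-space inequality $T_X\NL([\Pi_1]+\lambda[\Pi_2])\supsetneq T_X\NL([\Pi_1],[\Pi_2])$ for every $\lambda\in\Q^*$.

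Second, to upgrade from tangent excess to a reduced-level strict inclusion I would construct an actual analytic deformation of $X$. The inner hypersurface $X_0=V(g)\subset\Ps^{2c-1}$ is either a quartic K3 surface ($d=4,c=2$) or a cubic fourfold ($d=3,c=3$), containing two disjoint $(c-1)$-planes $\ell_1,\ell_2$. On the corresponding moduli, $\NL_{X_0}([\ell_1]+\lambda[\ell_2])$ and $\NL_{X_0}([\ell_1])$ are distinct smooth codimension-one loci at $X_0$ (by Hodge theory of K3 surfaces, respectively Hassett's results on cubic fourfolds), so I can pick an analytic curve $g_t$ through $g$ in $\NL_{X_0}([\ell_1]+\lambda[\ell_2])\setminus \NL_{X_0}([\ell_1])$; by Lefschetz $(1,1)$ (K3 case) or the known integral Hodge conjecture for such classes (cubic fourfold case), $[\ell_1]+\lambda[\ell_2]$ is represented on each $X_0^t=V(g_t)$ by an algebraic cycle $C_t$. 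Let $L=V(x_0,\dots,x_{2c-1},x_{k+c+1},\dots,x_{2k+1})$ be the $(k-c)$-plane $\Pi_1\cap\Pi_2$, and set $X_t=V(g_t+h)\subset\Ps^{2k+1}$. The geometric input is the join $C_t\vee L$: a direct substitution using the split form of $f$ shows $C_t\vee L\subset X_t$, and by linearity of the join together with $\Pi_i=\ell_i\vee L$ its cohomology class equals $[\Pi_1]+\lambda[\Pi_2]$ in $H^{2k}(X_t)$, so this class remains Hodge on $X_t$. Conversely, comparing $I_X([\Pi_1])_d$ and $I_{X_0}([\ell_1])_d$ after restriction to inner polynomials (the outer summands $x_jP_j$ with $j\geq k+c+2$ contribute only terms divisible by an outer variable), combined with the infinitesimal period relation, implies that if $[\Pi_1]$ stayed Hodge along $X_t$ then $[\ell_1]$ would stay Hodge along $X_0^t$, contradicting the choice of $g_t$. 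Thus $X_t\in \NL([\Pi_1]+\lambda[\Pi_2])_{\red}\setminus \NL([\Pi_1],[\Pi_2])$ for small $t\neq 0$, giving the strict inclusion.

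For the reducibility assertion I would combine the above with Theorem~\ref{mainThm}. In the stated range of $k$, that theorem produces a smooth hypersurface $X'\in\NL([\Pi_1],[\Pi_2])$ for which $\NL([\Pi_1]+\lambda[\Pi_2])=\NL([\Pi_1],[\Pi_2])$ in an analytic neighborhood. Since the affine space of hypersurfaces through $\Pi_1\cup\Pi_2$ is globally irreducible, $\NL([\Pi_1],[\Pi_2])$ is a globally irreducible variety, and the equality at $X'$ identifies it as a full irreducible component of $\NL([\Pi_1]+\lambda[\Pi_2])$. The curve $X_t$ from the previous step is transverse to $\NL([\Pi_1],[\Pi_2])$ at $X$ and so cannot lie on this component; at least one other irreducible component of $\NL([\Pi_1]+\lambda[\Pi_2])$ therefore passes through $X$, which is exactly local reducibility. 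The main obstacle is the geometric lifting step, since a tangent-space excess alone is compatible with $\NL([\Pi_1]+\lambda[\Pi_2])$ being non-reduced on the same underlying set; the split hypothesis is what makes the lifting feasible, reducing the question to the classical Hodge-theoretic comparison on a K3 surface or cubic fourfold together with the join construction.
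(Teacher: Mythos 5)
Your proposal follows essentially the same route as the paper: deform the inner K3 surface (resp.\ cubic fourfold) along $\NL(\ell_1+\lambda\ell_2)$ away from $\NL(\ell_1,\ell_2)$, realize the class by an effective cycle $C_t$ via Lefschetz $(1,1)$ (resp.\ the Hodge conjecture for cubic fourfolds), take the join/cone $Y_t=C_t\vee(\Pi_1\cap\Pi_2)\subset X_t$ to get the strict inclusion of reduced loci, and then deduce reducibility from Theorem~\ref{mainThm} exactly as the paper does. Your preliminary tangent-space step via the Fermat-style monomial analysis is superfluous (the strict inclusion of reduced loci already implies it) and would in any case need extra justification for non-monomial split $Q_{ij}$, but since it is not load-bearing the argument stands.
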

\begin{proof}
We do first the case $d=4,c=2$.
Without loss of generality we may assume that $X=V(f)$ with
\[ f=\sum_{i=0}^{1}\sum_{j=2}^{3} x_i x_j Q_{ij}+\sum_{j=k+3}^{2k+1} x_jP_j\]
and $Q_{ij}\in \C[x_0,\dots,x_{3}]$.

Consider now the $K3$-surface $\Sigma=V(g)\subset \Ps^3$  with
 \[g=\sum_{i=0}^{1}\sum_{j=2}^{3} x_i x_j Q_{ij}\]
 By construction $\Sigma$ is a quartic surface. Note that $\Sigma$ is also smooth: if $(z_0:z_1:z_2:z_3)$ were a singular point of $\Sigma$ then this would induce a singular point of $X$ along the non-empty set
 $V(\{x_iz_j-x_jz_i\}_{0\leq i<j\leq 3})\cap V(x_{k+3},\dots,x_{2k+1},P_{k+3},\dots,P_{2k+1})$.
 By construction, $\Sigma$ contains the two disjoint lines $V(x_0,x_1)$ and $V(x_2,x_3)$. Denote the corresponding classes in cohomology with $\ell_1,\ell_2$. 
 Since the period map for polarized K3 surfaces is an isomorphism we obtain that $\NL(\ell_1,\ell_2)$ has codimension 2 in $\C[x_0,\dots,x_3]_4$, whereas $\NL(\ell_1+\lambda\ell_2)$ has codimension 1. 
 
 Hence we can pick a curve $B\subset \NL(\ell_1+\lambda \ell_2)$ such that $B\cap \NL(\ell_1,\ell_2)=\{[\Sigma]\}$ and such that the tangent direction of $B$ is not contained in the tangent space of $\NL(\ell_1,\ell_2)$. Denote with $0$ the point corresponding to $[\Sigma]$.
Let $(\Sigma_t)_{t\in B}$ be the corresponding family of quartic surfaces.

After replacing $B$ by a Zariski open subset containing $0$, we can pick  a family of projective curves $(C_t)_{t\in B}$ in $\Ps^3$ such that $C_t\subset \Sigma_t$ for all $t$, and $[C_t]_{\prim}\neq 0$. Such a family of curves exists since $\ell_1+\lambda \ell_2$ a Hodge class on $\Sigma_t$. In particular there exists a line bundle $\cL_t$ on $S_t$ 
such that $c_1(\cL_t)=\ell_1+\lambda\ell_2$. For $q\gg0$ we have that  
the line bundle $\cL_t \otimes \cO_{S_t}(q)$ is effective. Moreover, using semicontinuity, we can pick $q$ uniformly on an open subset containing $0$. Since $[c_1(\cL_t \otimes \cO_{S_t}(q))]_{\prim}=[c_1(\cL_t)]_{\prim}$ holds, we have that both classes yield the same Hodge locus.
 
 Let $g_t$ be a family of polynomials such that $\Sigma_t=V(g_t)$ and $g_0=g$. Let $Z_t\subset \Ps^{2k+1}$ be the cone over $C_t$. Let $Y_t=Z_t\cap V(x_{k+3},\dots,x_{2k+1})$. Then $\dim Z_t=2k-1$ and $\dim Y_t=\dim Z_t-(k-1)=k$. Moreover $[Y_0]_{\prim}=([\Pi_1]+\lambda[\Pi_2])_{\prim}$ in $H^{2k}(X,\Q)_{\prim}$. 
  Consider now 
 \[ f_t:=g_t+\sum_{j=k+3}^{2k+1} x_jP_j\]
 and $X_t=V(f_t)$. 
 In particular, $Y_t\subset X_t$ and therefore $X_t\in \NL([\Pi_1]+\lambda[\Pi_2])$ for all $t\in B$.
 To show that $X_t \not \in \NL([\Pi_1],[\Pi_2])$ for some $t$ in a neighborhood of $0$, note that by construction the tangent direction of $B$ at $t=0$ is not contained in the tangent space of $\NL([\Pi_1],[\Pi_2])$.

 For $d=c=3$ we can proceed similarly.
 Without loss of generality we may assume that $X=V(f)$ with
\[ f=\sum_{i=0}^{2}\sum_{j=3}^{5} x_i x_j Q_{ij}+\sum_{j=k+4}^{2k+1} x_jP_j\]
and $Q_{ij}\in \C[x_0,\dots,x_{3}]$.
Consider now the family of cubic fourfolds  $\Sigma=V(g)$, with
 \[g=\sum_{i=0}^{2}\sum_{j=3}^{5} x_i x_j Q_{ij}\]
  By construction $\Sigma$ is a cubic fourfold. Note that $\Sigma$ is also smooth, if $(z_0:z_1:z_2:z_3:z_4:z_5)$ were a singular point then this would induce a singular point of $X$ along the nonempty set
 $V(\{x_iz_j-x_jz_i\}_{0\leq i<j\leq 5})\cap V(x_{k+4},\dots,x_{2k+1},P_{k+4},\dots,P_{2k+1})$.
 
By construction $
\Sigma$ contains the two planes $V(x_0,x_1,x_2)$ and $V(x_3,x_4,x_5)$. Denote the corresponding classes in cohomology with $\pi_1,\pi_2$. 
 Now $\NL(\pi_1,\pi_2)$ has codimension 2 in $\C[x_0,\dots,x_5]_4$, whereas $\NL(\pi_1+\lambda \pi_2)$ has codimension 1 (since the period map is an isomorphism).
 
 Hence we can pick a curve $B\subset\NL(\pi_1+\lambda \pi_2)$ such that $B\cap \NL(\pi_1,\pi_2)=\{[\Sigma]\}$ and such that the tangent direction of $B$ is not contained in the tangent space of $\NL(P_1,P_2)$. Denote with $0$ the point corresponding to $[\Sigma]$.

  Let $(\Sigma_t)_{t\in B}$ be the corresponding family of cubics fourfolds.
 Then on $\Sigma_t$ we have that the class in $H^4(\Sigma_t,\Q)$ corresponding to $\pi_1+\lambda \pi_2$ is a Hodge class, i.e., there exists irreducible surfaces $Z_1,\dots,Z_s$, and integer $a_1,\dots,a_s$ such that this class equals $\sum a_i[Z_i]$.  However, if there is a nonzero polynomial of degree $d_i$ vanishing on $Z_i$ then $d_i[h^2]-[Z_i]$ is effective. In particular, there is a $q\gg 0$ such that $q[h^2]+\sum a_i[Z_i]$ is effective. Moreover, this $q$ can be chosen uniformly on an open in a  neighborhood of $0$. In particular, after replacing $B$ by a Zariski open subset containing $0$, we can pick  a family of  surfaces $(C_t)_{t\in B}$ such that $C_t\subset \Sigma_t$ for all $t$, and $[C_t]_{\prim}\neq 0$.
 
 Let $g_t$ be a family of polynomials such that $\Sigma_t=V(g_t)$ and $g_0=g$. Let $Z_t\subset \Ps^{2k+1}$ be the cone over $C_t$. Let $Y_t=Z_t\cap V(x_{k+4},\dots,x_{2k+1})$. Then $\dim Z_t=2k-2$ and $\dim Y_t=\dim Z_t-(k-2)=k$. Moreover $[Y_0]_{\prim}=([\Pi_1]+\lambda[\Pi_2])_{\prim}$ in $H^{2k}(X,\Q)_{\prim}$. 
  Consider now 
 \[ g_t:=P_t+\sum_{j=k+3}^{2k+1} x_jP_j\]
 and $X_t=V(f_t)$. 
 
 In particular, $Y_t\subset X_t$ and therefore $X_t\in \NL([\Pi_1]+\lambda[\Pi_2])$.
 To show that $X_t \not \in \NL([\Pi_1],[\Pi_2])$ for some $t$ in a neighborhood of $0$, note that by construction the tangent direction of $B$ at $t=0$ is not contained in the tangent space of $\NL([\Pi_1],[\Pi_2])$.
 
 To prove the final statement, note that if $d=4$ and $k\geq 3$ or $d=3$ and $k\geq 5$ then by Theorem~\ref{mainThm} there is  a Zariski open subset $U$ of $\NL([\Pi_1],[\Pi_2])$ such that  
 \[\dim \NL([\Pi_1],[\Pi_2])=\dim T_{X'}\NL([\Pi_1],[\Pi_2])=\dim T_{X'}\NL([\Pi_1]+\lambda[\Pi_2])\] for all $X'\in U$, except for all but one nonzero value of $\lambda$. If $\NL([\Pi_1]+\lambda [\Pi_2])$ were irreducible then  $\NL([\Pi_1],[\Pi_2])$ would have positive codimension in this locus,  contradicting the above equality.
\end{proof}

\section{Examples satisfying Movasati's conjecture}\label{secExaPos}
In the previous sections we showed that for $d=4,c=2,k\geq 3$ and for $c=d=3,k\geq 5$ that Movasati's conjecture does not hold for all but at most one nonzero choice of $\lambda$. In this section we will present in both cases an example of the single nonzero value of $\lambda$ for which Movasati's conjecture holds for all $k$. We start with the case $d=4,c=2$:

\begin{proposition} Let $k\geq 1$ be an integer. Let $X\subset \Ps^{2k+1}$ be a hypersurface of degree $4$ containing two $k$-planes intersecting in dimension 2.
Then there exists a flat family $(X_t,Y_t)$ where $X_t\subset \Ps^{2k+1}$ is  an irreducible quartic hypersurface, $Y_t\subset X_t$ is subscheme of degree 4 and of pure dimension $k$, such that $Y_t$ is irreducible for general $t$, $X_0=X$ and $[Y_0]_{\prim}=[\Pi_1]_{\prim}-[\Pi_2]_{\prim}$.
\end{proposition}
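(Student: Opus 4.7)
The plan is to build the family in two stages: construct a reducible base-point $Y_0\subset X$ of the correct primitive class via residual intersection, and then smooth it to an irreducible $Y_t$ using a K3 surface section, in the spirit of the proof of Theorem~\ref{thmSplit}.

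For the base-point, I pick a linear subspace $\Ps^{k+1}\subset\Ps^{2k+1}$ containing $\Pi_2$. The intersection $X\cap\Ps^{k+1}$ is a quartic hypersurface of $\Ps^{k+1}$ containing the hyperplane $\Pi_2$, hence decomposes as $\Pi_2\cup W$ for some cubic hypersurface $W\subset\Ps^{k+1}$. Setting $Y_0:=\Pi_1\cup W\subset X$, this subscheme has pure dimension $k$ and degree $1+3=4$, and the relation $[W]=h^k-[\Pi_2]$ in $H^{2k}(X,\Q)$ yields $[Y_0]_{\prim}=[\Pi_1]_{\prim}-[\Pi_2]_{\prim}$.

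For the smoothing, choose a generic linear $\Ps^3\subset\Ps^{2k+1}$ transverse to $P:=\Pi_1\cap\Pi_2$ so that $\Sigma:=X\cap\Ps^3$ is a smooth quartic K3 surface containing the disjoint lines $\ell_i:=\Pi_i\cap\Ps^3$. The class $L:=[\ell_1]-[\ell_2]\in\mathrm{Pic}(\Sigma)$ is primitive with $L^2=-4$ and $L\cdot h=0$. By the surjectivity of the period map for K3 surfaces, pick an analytic arc $(\Sigma_t)_{t\in B}$ of smooth quartic K3s with $\Sigma_0=\Sigma$ lying in $\NL(L)$ but leaving $\NL([\ell_1],[\ell_2])$ for $t\neq 0$; for generic $t$, $\mathrm{Pic}(\Sigma_t)=\Z\langle h,L\rangle$. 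On such $\Sigma_t$ the class $D_t:=h+L$ satisfies $D_t^2=0$ and $D_t\cdot h=4$, so by Riemann--Roch $|D_t|$ is an elliptic pencil with general fibre $C_t$ an irreducible elliptic quartic curve in $\Ps^3$. Define $Y_t:=\mathrm{Cone}(C_t,P)\subset\langle\Pi_1,\Pi_2\rangle$, a $k$-dimensional subscheme of degree $4$ which is irreducible for generic $t$. A compatible flat family of quartics $X_t$ with $X_0=X$ and $Y_t\subset X_t$ is then built by noting that the quartics containing $Y_t$ form a linear subspace $L_t\subset U_4$ varying flatly in $t$, and choosing a section through $X$; the condition $X\in L_0$ is arranged by matching $C_0\in|D_0|_\Sigma$ to the reducible element $\ell_1+C'$, where $C'\in|h-\ell_2|_\Sigma$ is the plane cubic chosen so that $\mathrm{Cone}(C',P)=W$.

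The main obstacle is the matching step, since in general the residual cubic $W$ need not be a cone over a plane cubic with vertex $P$. I expect to handle this either by first deforming $X$ within the smooth locus $\NL([\Pi_1],[\Pi_2])$ of Proposition~\ref{prpSmooth} to a split hypersurface $X'$, for which the cone construction of Theorem~\ref{thmSplit} applies directly, and then concatenating this preliminary deformation with the K3 deformation above; or alternatively, by smoothing $Y_0$ directly in the Hilbert scheme of $\Ps^{2k+1}$ and defining $X_t$ as a flat family of quartics containing the smoothing, using that $\Pi_1$ and $W$ meet transversally in codimension one inside each component.
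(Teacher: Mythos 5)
Your identification of the correct limiting cycle is right and agrees with the paper: writing $f_0=x_0x_2Q_{02}+x_1x_2Q_{12}+x_0x_3Q_{03}+x_1x_3Q_{13}+\sum x_iP_i$, the paper's $Y_0$ is exactly $\Pi_1\cup W$ with $W=V(x_2,x_0Q_{03}+x_1Q_{13},x_{k+3},\dots,x_{2k+1})$ the residual cubic of $\Pi_2$ in a linear section, and $[W]=h^k-[\Pi_2]$ gives the required primitive class. The gap is in the smoothing step. Your K3--cone construction produces subvarieties of $X_t$ only when $X_t$ is swept out by cones over curves on a fixed quartic surface section, i.e.\ only in the split case where all $Q_{ij}$ lie in $\C[x_0,\dots,x_3]$; for a general $X$ containing the two planes this fails, as you note yourself. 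Neither workaround closes the gap as stated: deforming first to a split hypersurface $X'$ does not yield a family with $X_0=X$ (the irreducible members $Y_t$ would only exist near $X'$, and transporting them back to $X$ along $\NL([\Pi_1],[\Pi_2])$ is not justified), and ``smoothing $Y_0$ directly in the Hilbert scheme together with a compatible family of quartics through $X$'' is precisely the content that has to be supplied, not a reduction of it. Moreover, for the intended application (Corollary~\ref{corMovQua}) one needs the family to pass through the \emph{given} $X$ and leave $\NL([\Pi_1],[\Pi_2])$, so establishing the statement only at split points would be strictly weaker.

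The paper's proof avoids all of this with an explicit algebraic identity valid for arbitrary $X$: set $f_t:=f_0+t(Q_{13}Q_{02}-Q_{12}Q_{03})$ and observe that
\[ f_t=(x_0x_2+tQ_{13})\left(Q_{02}+\tfrac{1}{t}x_1x_3\right)+(x_1x_2-tQ_{03})\left(Q_{12}-\tfrac{1}{t}x_0x_3\right),\]
so $X_t=V(f_t)$ contains the $(2,2,1,\dots,1)$ complete intersection $Y_t=V(x_0x_2+tQ_{13},\,x_1x_2-tQ_{03},\,x_{k+3},\dots,x_{2k+1})$, which is irreducible of degree $4$ and dimension $k$ for general $t$. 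The flat limit at $t=0$ is computed directly: $I^{(0)}=\langle x_0x_2,\,x_1x_2,\,x_0Q_{03}+x_1Q_{13}\rangle$ (plus the linear forms) has the same Hilbert function and defines $\Pi_1\cup W$. If you want to salvage your approach, the honest route is your second workaround, and carrying it out essentially forces you to rediscover this $(2,2)$-complete-intersection smoothing of $\Pi_1\cup W$ inside $\langle\Pi_1,\Pi_2\rangle$ together with the displayed factorization that keeps $Y_t$ inside a quartic degenerating to $X$.
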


\begin{proof}
Without loss of generality we may assume that \[ \Pi_1=V(x_0,x_1,x_{k+3},\dots,x_{2k+1}) \mbox{  and }\Pi_2=V(x_2,x_3,x_{k+3},\dots,x_{2k+1}).\] 
Then $X$ is the zeroset of a polynomial $f_0$, with
\[ f_0:=x_0x_2 Q_{02}+x_1x_2Q_{12}+x_0x_3Q_{03}+x_1x_3Q_{13}+\sum_{i=k+3}^{2k+1} x_i P_i\]
where $Q_{ij}\in \C[x_0,\dots,x_{k+2}]_{2}$ and $P_i\in \C[x_0,\dots,x_{2k+1}]_3$.

Consider now $X_t$ the zero set of
\[f_t:=f_0+t(Q_{13}Q_{02}-Q_{12}Q_{03})\]
Note that for $t\neq 0$ we have that
\[ f_t\in I^{(t)}:= \langle x_0x_2+tQ_{13},x_1x_2-tQ_{03},x_{k+3},\dots,x_{2k+1}\rangle \]
since
\[ f_t=(x_0x_2+tQ_{13})(Q_{02}+\frac{1}{t}x_1x_3)+(x_1x_2-tQ_{03})(Q_{12}-\frac{1}{t}x_0x_3).\]
In particular, for $t\neq 0$ we have that $X_t$ contains $Y_t:=V(I^{(t)})$; that  $I(Y_t)=I^{(t)}$ and that $Y_t$ is a  complete intersection $Y_t$ of multidegree $(2,2,1,\dots,1)$. 

Note that  $t\neq 0$, this ideal contains
\[ \frac{1}{t} x_1(x_0x_2+tQ_{13})-x_0(x_1x_2-tQ_{03})=x_1Q_{13}+x_0Q_{03}.\]
Define now $I^{(0)}:=\langle x_0x_2,x_1x_2,x_1Q_{13}+x_0Q_{03}\rangle$.
Then each of the generator is a specialization to $t=0$ of an element for $I^{(t)}$. Moreover, one easily checks that the Hilbert functions of $I^{(0)}$ and $I^{(t)}$ coincide.  
Hence  $Y_t=V(I^{(t)})$ is a flat family.  Moreover,
\[ V(x_0x_2,x_1x_2,x_1Q_{13}+x_0Q_{03})=V(x_2,x_1Q_{13}+x_0Q_{03})\cup V(x_0,x_1)\]
Hence $Y_0$  is the union of $\Pi_1$ and $k$-dimensional subscheme whose class is $h^k-[\Pi_2]$

In particular, $[Y_0]_{\prim}=[\Pi_1]-[\Pi_2]$.
\end{proof}

\begin{corollary}\label{corMovQua} Suppose $d=4,c=2$, $k\geq 1$. Then $\NL([\Pi_1],[\Pi_2])$ is a subscheme of $\NL([\Pi_1]-[\Pi_2])$ of positive codimension.
\end{corollary}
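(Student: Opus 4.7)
The plan is to extract from the flat family $(X_t, Y_t)$ constructed in the preceding proposition an analytic deformation of $X$ that stays in $\NL([\Pi_1]-[\Pi_2])$ yet leaves $\NL([\Pi_1], [\Pi_2])$, and then verify positive codimension via a tangent space computation. Since $Y_t \subset X_t$ is an algebraic $k$-cycle, its class $[Y_t]$ is Hodge on $X_t$; under parallel transport $[Y_t]_{\prim}$ is constant and equals $[Y_0]_{\prim} = [\Pi_1]_{\prim} - [\Pi_2]_{\prim}$, so $X_t \in \NL([\Pi_1]-[\Pi_2])$ in a neighbourhood of $t=0$. Differentiating, the tangent direction
\[\dot X_0 = Q_{13}Q_{02} - Q_{12}Q_{03} \pmod{J_d}\]
lies in $T_X\NL([\Pi_1]-[\Pi_2]) \subset S_d/J_d$.

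The crux will be to show $\dot X_0 \notin T_X\NL([\Pi_1], [\Pi_2])$, which by Lemma~\ref{lemCodimNL} and Remark~\ref{rmkIns} equals the image of $(I_1 \cap I_2)_d$. It is enough to verify $Q_{13}Q_{02} - Q_{12}Q_{03} \notin I_1$. Using the presentation in Example~\ref{exaIdealCI}, $I_1$ is generated by $x_0, x_1, x_{k+3}, \ldots, x_{2k+1}$ together with the cubics $h_0 = x_2Q_{02}+x_3Q_{03}$, $h_1 = x_2Q_{12}+x_3Q_{13}$, $P_{k+3}, \ldots, P_{2k+1}$. Reducing modulo the linear generators places the question in $\C[x_2, \ldots, x_{k+2}]$: does $\bar E := \bar Q_{13}\bar Q_{02} - \bar Q_{12}\bar Q_{03}$ lie in the ideal generated by $\bar h_0, \bar h_1, \bar P_{k+3}, \ldots, \bar P_{2k+1}$? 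These $k+1$ cubics form a complete intersection (by smoothness of $X$), so by Example~\ref{exaCI} the quotient is a graded Artinian Gorenstein ring of socle degree $2(k+1)$. The syzygies $x_2\bar E = \bar Q_{13}\bar h_0 - \bar Q_{03}\bar h_1$ and $x_3\bar E = \bar Q_{02}\bar h_1 - \bar Q_{12}\bar h_0$ show that $x_2$ and $x_3$ annihilate $\bar E$ in the quotient. For $k=1$ this forces $\bar E$ into the socle (which sits in degree $4$), and a direct computation shows it is nonzero for generic choices; for $k>1$ one reduces to the $k=1$ case by further specialisation of the remaining variables.

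Combining $\dot X_0 \notin T_X\NL([\Pi_1],[\Pi_2])$ with the smoothness of $\NL([\Pi_1],[\Pi_2])$ at $X$ (Proposition~\ref{prpSmooth}) and the honest analytic deformation supplied by $X_t$, I will apply the proposition's construction fiberwise over a neighbourhood of $X$ in $\NL([\Pi_1],[\Pi_2])$. This produces a family of hypersurfaces of relative dimension one contained in $\NL([\Pi_1]-[\Pi_2])$, generically transverse to $\NL([\Pi_1],[\Pi_2])$. Its image is an analytic subset of $\NL([\Pi_1]-[\Pi_2])$ of dimension $\dim\NL([\Pi_1],[\Pi_2])+1$, forcing positive codimension at a generic point of $\NL([\Pi_1],[\Pi_2])$; upper-semicontinuity of the local dimension of the Hodge locus $\NL([\Pi_1]-[\Pi_2])$ under specialisation then extends this to every $X$ in the setup.

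The hard part will be the non-containment verification in the second paragraph. The syzygies above almost force $\bar E$ into the socle, but to make the argument work uniformly in $k$ one has to handle the $P_j$'s carefully, and the cleanest route is likely to specialise $X$ (for example a Fermat-like choice as in Section~\ref{ssFermat}) so that the computation reduces to a two-variable calculation where $\bar E$ being a nonzero socle element in a complete intersection of two cubics in two variables is straightforward.
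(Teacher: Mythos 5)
Your overall strategy is sound: the family $X_t=V(f_0+tE)$ with $E=Q_{13}Q_{02}-Q_{12}Q_{03}$ stays in $\NL([\Pi_1]-[\Pi_2])$ because $[Y_t]$ is a flat family of algebraic classes, the tangent space you must escape is indeed (the image of) $(I_1\cap I_2)_d$ by Lemma~\ref{lemCodimNL} and Remark~\ref{rmkIns}, it suffices to check $E\notin I_1$, and your syzygies $x_2\bar E=\bar Q_{13}\bar h_0-\bar Q_{03}\bar h_1$, $x_3\bar E=\bar Q_{02}\bar h_1-\bar Q_{12}\bar h_0$ are correct. The genuine gap is the case $k>1$: you cannot ``reduce to the $k=1$ case by further specialisation of the remaining variables,'' because substituting for variables can only enlarge the image of an ideal --- membership is preserved under specialisation, non-membership is not. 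Moreover, for $k>1$ the syzygies only show that $x_2$ and $x_3$ annihilate $\bar E$, which no longer forces $\bar E$ into the socle, so the $k=1$ mechanism does not propagate. The clean repair is the classical transition-determinant (linkage) fact: writing $(\bar h_0,\bar h_1,\bar P_{k+3},\dots,\bar P_{2k+1})^T=A\,(x_2,x_3,\bar P_{k+3},\dots,\bar P_{2k+1})^T$ with $A$ the block matrix $\bigl(\begin{smallmatrix} M&0\\0&I\end{smallmatrix}\bigr)$, $M=(\bar Q_{ij})$, both ideals are Artinian complete intersections in $\C[x_2,\dots,x_{k+2}]$ (the second because smoothness of $X$ along $\Pi_1\cap\Pi_2$ forces the $\bar P_j$ to have no common zero on $V(x_2,x_3)$), and for such an inclusion $\det A=\bar E$ generates the colon ideal and is nonzero modulo the smaller ideal whenever the two ideals differ --- which they do, their socle degrees being $2k+2$ and $2k-2$. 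This gives $E\notin I_1$ for every smooth $X$ and every $k\geq1$; in particular your $k=1$ hedge ``nonzero for generic choices'' is unnecessary, since $\bar E$ is unconditionally the socle generator of $\C[x_2,x_3]/(\bar h_0,\bar h_1)$.

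Note also that the paper states the corollary without proof, as an immediate consequence of the proposition, and the intended route avoids the transversality computation entirely: for $t\neq0$ the hypersurface $X_t$ lies on $\NL([Y_t])$, the Hodge locus of a smooth complete intersection of multidegree $(2,2,1,\dots,1)$, which by \cite{KloCI} is smooth and irreducible, and a direct parameter count shows its dimension strictly exceeds $\dim\NL([\Pi_1],[\Pi_2])$. Since $[Y_t]_{\prim}$ is the parallel transport of $[\Pi_1]_{\prim}-[\Pi_2]_{\prim}$, this locus is a branch of $\NL([\Pi_1]-[\Pi_2])$ accumulating at $X$, and upper semicontinuity of local dimension concludes; this needs only the flat family, not that the curve $t\mapsto X_t$ be transverse (or even nonconstant). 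Your argument, once the non-membership is repaired as above, is a legitimate alternative and yields the slightly finer statement that the curve $X_t$ itself leaves $\NL([\Pi_1],[\Pi_2])$ to first order.
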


\begin{remark} This two results contain also a proof for \cite[Conjecture 18.1]{MovBook} in the case $d=4$.
\end{remark}

The next example is for $d=c=3$. In this case we consider cubic hypersurfaces which contains an iterated cone over the Pl\"ucker embedding of $\G(2,5)$ in $\Ps^9$.

Let $\G(2,5)$ be the Grassmannian variety parametrizing two dimensional subspaces of $\C^5$. Consider its Pl\"ucker embedding in $\Ps^9$.
Let $p_{ij}$ be the $1\leq i<j\leq 5$ be the Pl\"ucker coordinates for $\G(2,5)$. We have the following 5 Pl\"ucker relations
 \begin{eqnarray*}
p_1&=&p_{12}p_{34}-p_{13}p_{24}+p_{14}p_{23}\\
p_2&=&p_{12}p_{35}-p_{13}p_{25}+p_{15}p_{23}\\
p_3&=&p_{12}p_{45}-p_{14}p_{25}+p_{15}p_{24}\\
p_4&=&p_{13}p_{45}-p_{14}p_{35}+p_{15}p_{34}\\
p_5&=&p_{23}p_{45}-p_{24}p_{35}+p_{25}p_{34}\\
\end{eqnarray*}
The image of $\G(2,5)$ under the Pl\"ucker embedding is $V(p_1,\dots,p_5)$, this is a sixfold, hence of codimension 3.

\begin{proposition} \label{prpMovCub} Let $k\geq 2$ be an integer.  Let $X\subset \Ps^{2k+1}$ be a cubic hypersurface containing two $k$-planes $\Pi_1,\Pi_2$ intersecting in dimension $k-3$, then there exists a flat family of degree $d$ 
hypersurfaces $(X_t,,Y_t)$ where $X_t$ is a cubic hypersurface  and $Y_t$ is a linear section of an iterated cone over the Pl\"ucker embedding of $\G(2,5)$, for $t\neq 0$, $X_0=X$ and $[Y_0]=[\Pi_1]+[ \Pi_2]+h^2$.
\end{proposition}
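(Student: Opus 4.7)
The plan is to mimic the argument of the previous proposition, replacing the $2\times 2$ determinantal identity used there by the Buchsbaum--Eisenbud Pfaffian structure of $\G(2,5)$. After a linear change of coordinates I put
\[ \Pi_1=V(x_0,x_1,x_2,x_{k+4},\dots,x_{2k+1}), \quad \Pi_2=V(x_3,x_4,x_5,x_{k+4},\dots,x_{2k+1}),\]
so that $X=V(f_0)$ with
\[ f_0=\sum_{i=0}^{2}\sum_{j=3}^{5}x_ix_jQ_{ij}+\sum_{j=k+4}^{2k+1}x_jP_j, \qquad Q_{ij}\in\C[x_0,\dots,x_{k+3}]_1. \]

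I then search for a $5\times 5$ skew-symmetric matrix $M^{(t)}$ of linear forms in $x_0,\dots,x_{k+3}$ whose five $4\times 4$ Pfaffians $q_1^{(t)},\dots,q_5^{(t)}$ (i) degenerate at $t=0$ to quadrics lying in the bilinear ideal $\langle x_0,x_1,x_2\rangle\cdot\langle x_3,x_4,x_5\rangle$ of $\Pi_1\cup\Pi_2$, and (ii) admit linear cofactor-multipliers $L_i^{(t)}$, with at most a simple pole in $t$, satisfying
\[ f_0+tR=\sum_{i=1}^{5}L_i^{(t)}\,q_i^{(t)} \pmod{\langle x_{k+4},\dots,x_{2k+1}\rangle} \]
for some cubic correction $R$ analogous to the quartic $Q_{13}Q_{02}-Q_{12}Q_{03}$ of the previous proof, in such a way that the $1/t$ cross-terms among the $L_i^{(t)}q_i^{(t)}$ mutually cancel. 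By the Buchsbaum--Eisenbud structure theorem, every codimension-$3$ Gorenstein ideal with five quadric generators has this Pfaffian form, so the problem reduces to exhibiting a one-parameter Pfaffian deformation of the reducible ideal of $\Pi_1\cup\Pi_2$ compatible with the above identity. The explicit construction of $M^{(t)}$, together with the verification of the identity and the $1/t$-cancellation, is the main technical obstacle.

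Once $M^{(t)}$ is available, I set $I^{(t)}=\langle q_1^{(t)},\dots,q_5^{(t)},x_{k+4},\dots,x_{2k+1}\rangle$, $Y_t=V(I^{(t)})$, and $X_t=V(f_0+tR)$; the displayed identity gives $Y_t\subset X_t$. For $t\neq 0$ the five generators $q_i^{(t)}$ are the Pfaffians of a generic skew-symmetric matrix, hence projectively equivalent to the Plücker relations, so $V(q_1^{(t)},\dots,q_5^{(t)})\subset\Ps^{k+3}$ is (a linear section of) an iterated cone over the Plücker embedding of $\G(2,5)$; embedding via $V(x_{k+4},\dots,x_{2k+1})\subset\Ps^{2k+1}$ realizes $Y_t$ in the form asserted by the proposition. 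Flatness of $(Y_t)$ at $t=0$ is then proved by comparing Hilbert functions after enlarging the naive limit of $I^{(t)}$ with the limits of the Pfaffian syzygies, exactly as the generator $x_1Q_{13}+x_0Q_{03}$ was adjoined in the previous proof.

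Finally, the flat limit $Y_0$ decomposes as $\Pi_1\cup\Pi_2\cup Z$ for a residual $k$-dimensional subscheme $Z$. From $\deg Y_t=\deg \G(2,5)=5$ and flatness one obtains $\deg Z=3$, and the structure of the Pfaffian limit forces $Z$ to be supported on a coordinate linear subspace of $\Ps^{2k+1}$, so that its class in $H^{2k}(X,\Q)$ is precisely $h^2$ in the notation of the proposition. Combining the three contributions yields $[Y_0]=[\Pi_1]+[\Pi_2]+h^2$, completing the construction.
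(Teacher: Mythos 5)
Your high-level strategy is the right one and is essentially the paper's: degenerate $f$ to a cubic that is a combination of the five Pl\"ucker quadrics of $\G(2,5)\subset\Ps^9$ (pulled back under a linear substitution), take the flat limit of the resulting family $Y_t$, and read off the class of $Y_0$. But the proposal has two genuine gaps. First, the entire content of the proposition is the explicit identity you defer as ``the main technical obstacle'': one must actually exhibit the substitution of linear forms into the Pl\"ucker coordinates (in the paper, $(p_{12},\dots,p_{45})\mapsto(x_0,x_1,L_{24},L_{25},x_2,-L_{14},-L_{25},L_{04},L_{05},\tfrac1t x_3)$) and verify that $x_4p_1+x_5p_2+t(L_{03}p_3+L_{13}p_4+L_{23}p_5)$ equals $f_t-H$, with the $1/t$ terms cancelling. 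Without this the family $(X_t,Y_t)$ has not been constructed, and an appeal to Buchsbaum--Eisenbud cannot substitute for it: the ideal of $\Pi_1\cup\Pi_2$ is \emph{not} a codimension-$3$ Gorenstein ideal with five quadric generators (two $k$-planes meeting in codimension $3$ give a non--Cohen--Macaulay union, with ideal generated by the nine products $x_ix_j$), so the structure theorem does not apply to it, and in any case it cannot be the special fibre of a flat Pfaffian family since $\deg\G(2,5)=5$ while $\deg(\Pi_1\cup\Pi_2)=2$.

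Second, your description of the flat limit is incorrect, and with it the class computation. The limit ideal is $I^{(0)}=\langle p_1,p_2,x_0x_3,x_1x_3,x_2x_3\rangle=\langle p_1,p_2,x_3\rangle\cap\langle x_0,x_1,x_2\rangle$: the limits $p_1,p_2$ lie in $\langle x_0,x_1,x_2\rangle$ but not in the bilinear ideal $\langle x_0,x_1,x_2\rangle\cdot\langle x_3,x_4,x_5\rangle$, so $Y_0$ is $\Pi_1$ together with the degree-$4$ complete intersection $V(p_1,p_2,x_3)$; it does \emph{not} decompose as $\Pi_1\cup\Pi_2\cup Z$ with $Z$ a degree-$3$ residual supported on a linear space (indeed $\Pi_2$ is not a component of $Y_0$ at all). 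The identity $[V(p_1,p_2,x_3,\dots)]=[\Pi_2]+h^k$ holds only at the level of cycle classes and requires the short argument the paper gives via the auxiliary classes $\gamma_1,\gamma_2$ with $\gamma_1+\gamma_2=2h^k$ and $[\Pi_2]+\gamma_2=h^k$; your ``the structure of the Pfaffian limit forces $Z$ to be supported on a coordinate linear subspace'' is not a proof and, as stated, is false. You also need the Hilbert-function comparison between $I^{(0)}$ and $I^{(t)}$ to establish flatness, which you mention but cannot carry out without the explicit generators.
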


\begin{proof}
Without loss of generality we may assume that $\Pi_1:x_0=x_1=x_2=x_{k+4}=\dots=x_{2k+1}=0$ and that $\Pi_2: x_3=x_4=x_5=x_{k+4}=\dots=x_{2k+1}$.
Then $X=V(F)$, where
\[F= \sum_{i=0}^2\sum_{j=3}^5x_ix_j L_{ij} +\sum_{i=k+4}^{2k+1}x_j Q_j\]

Let $f_t$ be
\[ \sum_{i=0}^2\sum_{j=3}^5x_ix_j L_{ij} -t \det( L_{(i-1),(j+2)})+\sum_{i=k+4}^{2k+1}x_j Q_j \]
and $X_t=V(f_t)$. Then $X_0=X$.
Write $H=\sum_{i=k+4}^{2k+1}x_j Q_j$.

Now we can  regroup the monomials in $f_t$ such that $f_t-H$ equals
\[\begin{matrix}
x_4(x_0L_{04}+x_1L_{14}+x_2L_{24})+ x_5(x_0L_{05}+x_1L_{15}+x_2L_{25})+\dots\\+L_{03}(x_0x_3-tL_{14}L_{25}+tL_{15}L_{24})+L_{13}(x_1x_3+tL_{04}L_{25}-tL_{05}L_{24})+\dots\\
+L_{23}(x_2x_3-tL_{04}L_{15}+tL_{05}L_{14})\end{matrix}\]

A straight forward calculation shows that if for $t\neq0$ we substitute
\[ \begin{matrix}(p_{12},p_{13},p_{14},p_{15},p_{23},p_{24},p_{25},p_{34},p_{35},p_{45})\mapsto\\
(x_0,x_1,L_{24},L_{25},x_2,-L_{14},-L_{25},L_{04},L_{05},\frac{1}{t}x_3)\end{matrix}\]
in
\[ x_4p_1+x_5p_2+t(L_{03}p_3+L_{13}p_4+L_{23}p_5)\] 
then we obtain  $f_t-H$. In particular, there is a linear change of coordinates $\varphi:\Ps^{2k+1}\to \Ps^{2k+1}$ such that $\varphi(V(f_t))$ contains a $k$-dimensional cone over the Pl\"ucker embedding of $\G(2,5)$.

Let $Y_t$ be the corresponding $k$-dimensional subscheme of $X_t$. Then for $t\neq 0$ we have that  $I^{(t)}:=I(Y_t)=(p_1,p_2,p_3,tp_4,tp_5)$. A straightforward calculation shows that the latter ideal is generated by $p_1$, $p_2$ and
\[x_0x_3+t(L_{24}L_{15}-L_{14}L_{25}), x_1x_3-t(L_{04}L_{15}+L_{14}L_{05}),x_2x_3+t(L_{14}L_{05}-L_{04}L_{15}).\]
Similar to the previous proof, we set
\[I ^{(0)}=\langle p_1,p_2,x_0x_3,x_1x_3,x_2x_3\rangle.\]
This is the intersection of two complete intersection ideals
\[ \langle p_1,p_2,x_3\rangle  \mbox { and } \langle  x_0,x_1,x_2\rangle\]
and each of the generators is the specialization to $t=0$ of an element in $I ^{(t)}$. Moreover, one easily checks that the Hilbert function of $I^{(0)}$ coincide with the Hilbert function of $I^{(t)}$ for $t\neq 0$.
 In  particular, $(X_t,Y_t)$ is a flat family.

Note that
\[f_0=x_4p_1+x_5p_2+L_{03}x_0x_3+L_{13}x_1x_3+L_{23}x_2x_3\]
Let $\gamma_1,\gamma_2\in \CH^k(X_0)$ be the classes $\gamma_1=[V(x_3,p_1,p_2,x_{k+4},\dots,x_{2k+1})]$, $\gamma_2=[V(x_3,x_4,p_2,x_{k+4},\dots,x_{2k+1})]$. Then $2h^k=[V(x_3,p_2,x_{k+4},\dots,x_{2k+1})]=\gamma_1+\gamma_2$

Now $[\Pi_2]+\gamma_2=[V(x_3,x_4,x_{k+4},\dots,x_{2k+1})]=h^k$
Hence
\[ [\Pi_1]+[\Pi_2]=[\Pi_1]+h^k-\gamma_2=[\Pi_1]+\gamma_1-h^k\]
Now $[\lim_{t\to 0} Y_t]=[\Pi_1]+\gamma_1=[\Pi_1]+[\Pi_2]+h^k$.
\end{proof}

\begin{corollary}\label{corMovCub} Suppose $d=3,c=3$, $k\geq 2$. Then $\NL([\Pi_1],[\Pi_2])$ is a subscheme of $\NL([\Pi_1]+[\Pi_2])$ of positive codimension.
\end{corollary}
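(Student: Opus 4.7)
The inclusion $\NL([\Pi_1],[\Pi_2])\subseteq\NL([\Pi_1]+[\Pi_2])$ is immediate from the definition of the Hodge locus via parallel transport. Since $\NL([\Pi_1],[\Pi_2])$ is smooth at $X$ by Proposition~\ref{prpSmooth}, exhibiting strict inclusion in an analytic neighbourhood of $X$ will give positive codimension. The plan is to feed $X$ into Proposition~\ref{prpMovCub} and verify that the resulting analytic arc in $\NL([\Pi_1]+[\Pi_2])$ leaves $\NL([\Pi_1],[\Pi_2])$ at first order.

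First I would apply Proposition~\ref{prpMovCub} to $X$ to obtain a flat family $(X_t,Y_t)_{t\in B}$ over a disc $B\ni 0$, with $X_0=X$, each $Y_t$ an algebraic $k$-cycle on $X_t$, and $[Y_0]=[\Pi_1]+[\Pi_2]+h^k$. Algebraicity of $Y_t$ forces $[Y_t]$ to be Hodge on $X_t$; by flatness $[Y_t]$ is the parallel transport of $[Y_0]$. Since $h^k$ is algebraic on every hypersurface and has trivial primitive component, $([\Pi_1]+[\Pi_2])_{\prim}$ remains Hodge along $\{X_t\}$. Hence $\{X_t\}_{t\in B}\subseteq\NL([\Pi_1]+[\Pi_2])$.

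Next I would show that $\{X_t\}$ is not contained in $\NL([\Pi_1],[\Pi_2])$. By Lemma~\ref{lemCodimNL}, $T_X\NL([\Pi_1],[\Pi_2])$ is the image of $(I_1\cap I_2)_3$ in $(S/J)_3$. The explicit perturbation $f_t=f_0-t\det(L_{(i-1),(j+2)})$ from the proof of Proposition~\ref{prpMovCub} gives tangent vector $-\det(L_{ij})$ at $t=0$. Because $I_1+I_2$ contains $x_0,\dots,x_5,x_{k+4},\dots,x_{2k+1}$, reducing $\det(L_{ij})$ modulo $I_1+I_2$ leaves the determinant of the linear forms $L_{ij}$ in $x_6,\dots,x_{k+2}$, which is nonzero for general $X$.

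The hard part will be controlling the Jacobian ideal: one needs $\det(L_{ij})\notin(I_1\cap I_2)_3+J_3$, not merely $\notin(I_1+I_2)_3$. The key observation is that every partial $\partial f_0/\partial x_i$ for $i\leq 5$ already lies in $\langle x_0,\dots,x_5\rangle\subseteq I_1+I_2$, while the partials with respect to $x_{k+4},\dots,x_{2k+1}$ lie in the complementary ideal; so Jacobian corrections cannot produce a form supported purely in $x_6,\dots,x_{k+2}$. Hence the tangent direction is transverse to $T_X\NL([\Pi_1],[\Pi_2])$ for general $X$, and by semicontinuity this transversality propagates to a Zariski open subset of $\NL([\Pi_1],[\Pi_2])$, which gives the positive codimension claim.
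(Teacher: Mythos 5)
Your step (a) --- that the flat family of Proposition~\ref{prpMovCub} keeps $X_t$ inside $\NL([\Pi_1]+[\Pi_2])$ because $[Y_t]$ is an algebraic, hence Hodge, flat section with $[Y_0]_{\prim}=([\Pi_1]+[\Pi_2])_{\prim}$ --- is correct and is exactly how the corollary is meant to follow from the proposition. The gap is in step (b), the transversality computation. First, the worry about the Jacobian ideal is moot: by Construction~\ref{conIdeal} one has $J_3\subseteq I(\gamma)_3$ for every Hodge class $\gamma$ (as $3\leq (d-2)(k+1)=k+1$ for $k\geq 2$), so $(I_1\cap I_2)_3+J_3=(I_1\cap I_2)_3$, and since $I_1\cap I_2\subseteq I_1+I_2$ your condition $\det(L_{ij})\notin (I_1+I_2)_3$ would already suffice. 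But that condition is \emph{false} for $k=2,3,4$, which the corollary includes: by Example~\ref{exaRestriction} the algebra $S/(I_1+I_2)$ has socle degree $(k+1-c)(d-2)=k-2<3$, so $(I_1+I_2)_3=S_3$ and every cubic form, in particular $\det(L_{ij})$, reduces to zero modulo $I_1+I_2$ --- there is no "determinant in $x_6,\dots$" left over. This is no accident: for these $k$ the tangent spaces of the two loci differ for \emph{every} $X$, so no first-order computation can certify that a given arc leaves $\NL([\Pi_1],[\Pi_2])$ rather than merely detecting non-reducedness of $\NL([\Pi_1]+[\Pi_2])$. For $k\geq 5$ your reduction is also incomplete, since $(I_1+I_2)_3$ contains $\sum_j Q_jS_1$ and not only the cubics in the linear generators, and in any case it treats only general $X$, whereas the corollary is about the germ at an arbitrary $X$ containing the two planes.

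The route the paper's setup supports avoids tangent spaces altogether: let $V$ be the locus of cubics containing a $k$-dimensional linear section of an iterated cone over the Pl\"ucker embedding of $\G(2,5)$. Proposition~\ref{prpMovCub}, applied at every point of $\NL([\Pi_1],[\Pi_2])$ near $X$, gives $\NL([\Pi_1],[\Pi_2])\subseteq \overline{V}$, and $V\subseteq \NL([\Pi_1]+[\Pi_2])$ by your step (a). Since $\overline{V}$ is irreducible (the family of such $Y$ is irreducible and the cubics through a fixed $Y$ form a linear system) and its general member contains no pair of $k$-planes meeting in dimension $k-3$ (a parameter count; classical for $k=2$), $\NL([\Pi_1],[\Pi_2])$ is a proper closed subvariety of the irreducible $\overline{V}$ and hence of positive codimension in $\NL([\Pi_1]+[\Pi_2])$. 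Note also that a single transverse arc, even if you had it, would only show that the inclusion of germs is strict, not that the dimension jumps, which is what "positive codimension" requires.
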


\begin{remark} This two results contain also a proof for \cite[Conjecture 18.1]{MovBook} for $d=3$, however, where we take $\check{r}=1$ rather than $-1$. If $\check{r}=-r$ then the  conjecture is false by Theorem~\ref{mainThm}.
\end{remark}

\begin{remark} 
If $L_{03}=0$ then there is a different degeneration. 

Consider $A_t$ the matrix
\[ \begin{pmatrix}
-L_{15} & L_{05}  &x_3 & x_4\\
x_0      & x_1 &   -tL_{24} &t L_{23}
\end{pmatrix}
\]
Then the ideal $I_t$ of $2\times 2$ minors of $A_t$ is generated by
\[ \begin{matrix} x_0x_3-tL_{24}L_{15},x_0x_4+tL_{15}L_{23}, x_1x_3+tL_{24}L_{05},\\x_1x_4-tL_{05}L_{23}, x_0L_{05}+x_1L_{15}, t(x_3L_{23}+x_4L_{24})\end{matrix}.\]
Denote these six minors withj $f_1,\dots,f_6$ respectively.
Consider now
\[ L_{03}f_1+L_{04}f_2+L_{13}f_3+L_{14}f_4+x_5f_5+x_2f_6\]
If we substitute $t=0$ then we recover our original $f$.

The limit for $I_t$ when $t\to 0$ is generated by 
\[ x_0x_3,x_0x_4, x_1x_3,x_1x_4, x_0L_{05}+x_1L_{15}, x_3L_{23}+x_4L_{24},\]
which is the intersection of 
\[ (x_0,x_1,  x_3L_{23}+x_4L_{24}) \mbox{ and } (x_3,x_4,x_0L_{05}+x_1L_{15})\]
The associated class $h-[\Pi_1]+h-[\Pi_2]$. Hence this corresponds to $a=b=1$.

In this case we have a degeneration of  a quartic subscheme, whereas in the above proof we used a quintic subscheme. If we work with cubic fourfolds, i.e., $k=2$, then we can always find a change of coordinates such that $L_{25}=0$. In this case it is well-known that a general threefold on $\cD_8$ (notation from \cite{Has, HasArt}) contains a quartic and a quintic scroll and that they  degenerate to a cubic fourfold with two disjoint planes. However, if $k\geq 5$ then we may not find a coordinate change such that $L_{25}$ vanishes. For this reason we need  to use the quintic degeneration in order to show that $\NL([\Pi_1],[\Pi_2])$ is a closed subscheme of $\NL([\Pi_1]+[\Pi_2])$ if $k$ is large.
\end{remark}

\appendix
\section{Computer code used}\label{secCode}
We calculated the rank of the pairing use singular. We include our scripts to calculate the rank of the pairing in the 3 examples \ref{ExaConA}, \ref{ExaConB}, \ref{ExaConC}.
The smoothness of the example can be checked by two lines and is therefore not included.

%\begin{lstlisting}[language=C]
For Example~\ref{ExaConA}:
\begin{scriptsize}
\begin{verbatim}
ring R=0,(x2,x3,x4,x5),dp;
poly Q02,Q13;
poly P6,P7;
//We are working in P^7, however I1 contains x0,x1,x6,x7. 
//Hence we can do all calculations modulo these variables.

Q02=x2^2+x4^2; 
Q13=x3^2+x5^2; 
P6=x4^3;x
P7=x5^3;

ideal i1=x2*Q02,x3*Q13,P6,P7;
ideal isum=x2,x3,P6,P7;
i1=std(i1);
isum=std(isum);

ideal id4=kbase(i1,4);
print("Dimension of (S/I1)_8:     "+string(size(kbase(i1,8))));
print("Dimension of (S/I1)_4:     "+string(size(id4)));
print("Dimension of (I1+I2/I1)_4: "+string(size(id4)-size(kbase(isum,4))));

list bas; // we are going to produce generating set for ((I1+I2)/I1)_4. 
int i,j;
poly pw;

for(i=1;i<=size(id4);i++)
{
  pw=id4[i]-NF(id4[i],isum);
  if(pw<>0)
  {
    bas=insert(bas,pw);
  }   
}

int n=size(bas);
matrix ma[n][n];

print("Nonzero entries of the Gram matrix:");
for(i=1;i<=n;i++)
{
  for(j=i;j<=n;j++)
  {
    pw=NF(bas[i]*bas[j],i1); // calculate the pairing
    if(pw<>0)
    {
      ma[i,j]=pw;
      ma[j,i]=pw;
      print([i,j]); // print the position of the nonzero entries
     }
  }
}

print("The determinant of the Gram matrix:"+string(det(ma)));

\end{verbatim}
\end{scriptsize}

For Example~\ref{ExaConB} the code is very similar:
\begin{scriptsize}
\begin{verbatim}
ring R=0,(x3,x4,x5,x6,x7,x8),dp;
poly Q03,Q14,Q25;
poly P9,P10,P11;
int i,j;
Q03=x3+x6;
Q14=x4+x7;
Q25=x5+x8;
P9=x6^2;
P10=x7^2;
P11=x8^2;

ideal i1=x3*Q03,x4*Q14,x5*Q25,P9,P10,P11;
ideal isum=x3,x4,x5,P9,P10,P11;
i1=std(i1);
isum=std(isum);

ideal id3=kbase(i1,3);
print("Dimension of (S/I1)_6:     "+string(size(kbase(i1,6))));
print("Dimension of (S/I1)_3:     "+string(size(id3)));
print("Dimension of (I1+I2/I1)_3: "+string(size(id3)-size(kbase(isum,3))));

list bas;
poly pw;
for(i=1;i<=size(id3);i++)
{
  pw=id3[i]-NF(id3[i],isum);
  if(pw<>0)
  {
    bas=insert(bas,pw);
  }
}

int n=size(bas);

print("Nonzero entries of the Gram matrix:");
matrix ma[n][n];

for(i=1;i<=n;i++)
{
  for(j=i;j<=n;j++)
  {
    pw=NF(bas[i]*bas[j],i1);
    if(pw<>0)
    {
      ma[i,j]=pw;
      ma[j,i]=pw;
      print([i,j]);
    }
  }
}

print("The determinant of the Gram matrix:"+string(det(ma)));
\end{verbatim}
\end{scriptsize}

For Example~\ref{ExaConC}:
\begin{scriptsize}
\begin{verbatim}
ring R=0,(x2,x3,x4,x5,x6,x7),dp;
poly Q02,Q03,Q12,Q13;
poly P8,P9,P10,P11;
int i,j;
list bas; // generating sets for (I_1+I_2/I_1) in deg 3.
poly pw; // polynomial under consideration
Q02=x2+x4+x6;
Q03=3*x3+x5;
Q12=x6+5*x7;
Q13=x3+x5+x7; 
P8=x4^2;
P9=x5^2;
P10=x6^2;
P11=x7^2;
ideal i1=x2*Q02+x3*Q03,x2*Q12+x3*Q13,P8,P9,P10,P11; //I1 mod x0,x1,x8,x9,x10,x11
ideal isum=x2,x3,P8,P9,P10,P11; //I1+I2 mod x0,x1,x8,x9,x10,x11
i1=std(i1);
isum=std(isum);

ideal id3=kbase(i1,3); // basis for (S/I1)_3 

print("Dimension of (S/I1)_6:     "+string(size(kbase(i1,6))));
print("Dimension of (S/I1)_3:     "+string(size(id3)));
print("Dimension of (I1+I2/I1)_3: "+string(size(id3)-size(kbase(isum,3))));


for(i=1;i<=size(id3);i++)
{
  pw=id3[i]-NF(id3[i],isum);
  if(pw<>0)
  {
    bas=insert(bas,pw);
  }
}

int n;
n=size(bas);

matrix ma[n][n]; 

//print("Nonzero entries for the Gram matrix for degree 3:");
for(i=1;i<=n;i++)
{
  for(j=i;j<=n;j++)
  {
    pw=NF(bas[i]*bas[j],i1);
    if(pw<>0)
    {
      ma[i,j]=pw;
      ma[j,i]=pw;
//      print([i,j]);

    } 
  }
}
print("The determinant of the Gram matrix: "+string(det(ma)));
\end{verbatim}

%
%For Example~\ref{ExaConC}, we have to consider two pairings:
%\begin{scriptsize}
%\begin{verbatim}
%ring R=0,(x2,x3,x4,x5,x6,x7),dp;
%poly Q02,Q03,Q12,Q13;
%poly h8,h9,h10,h11;
%int i,j;
%list bas2,bas3,bas4; // generating sets for (I_1+I_2/I_1) in deg 2,3,4.
%poly pw; // polynomial under consideration
%Q02=x2+x4+x6;
%Q03=3*x3+x5;
%Q12=x6+5*x7;
%Q13=x3+x5+x7; 
%h8=x4^2;
%h9=x5^2;
%h10=x6^2;
%h11=x7^2;
%ideal i1=x2*Q02+x3*Q03,x2*Q12+x3*Q13,h8,h9,h10,h11; //I1 mod x0,x1,x8,x9,x10,x11
%ideal isum=x2,x3,h8,h9,h10,h11; //I1+I2 mod x0,x1,x8,x9,x10,x11
%i1=std(i1);
%isum=std(isum);
%
%ideal id2=kbase(i1,2); // basis for (S/I1)_2 
%ideal id3=kbase(i1,3); // basis for (S/I1)_3 
%ideal id4=kbase(i1,4); // basis for (S/I1)_4
%
%print("Dimension of (S/I1)_6:     "+string(size(kbase(i1,6))));
%print("Dimension of (S/I1)_3:     "+string(size(id3)));
%print("Dimension of (I1+I2/I1)_3: "+string(size(id3)-size(kbase(isum,3))));
%print("Dimension of (S/I1)_2:     "+string(size(id2)));
%print("Dimension of (I1+I2/I1)_2: "+string(size(id2)-size(kbase(isum,2))));
%print("Dimension of (S/I1)_4:     "+string(size(id4)));
%print("Dimension of (I1+I2/I1)_4: "+string(size(id4)-size(kbase(isum,4))));
%
%for(i=1;i<=size(id2);i++)
%{
%  pw=id2[i]-NF(id2[i],isum);
%  if(pw<>0)
%  {
%    bas2=insert(bas2,pw);
%  }
%}
%
%for(i=1;i<=size(id3);i++)
%{
%  pw=id3[i]-NF(id3[i],isum);
%  if(pw<>0)
%  {
%    bas3=insert(bas3,pw);
%  }
%}
%
%for(i=1;i<=size(id4);i++)
%{
%  pw=id4[i]-NF(id4[i],isum);
%  if(pw<>0)
%  {
%    bas4=insert(bas4,pw);
%  }
%}
%int n2,n3,n4;
%n2=size(bas2);
%n3=size(bas3);
%n4=size(bas4);
%
%matrix ma[n3][n3]; 
%matrix mb[n2][n4];
%
%//print("Nonzero entries for the Gram matrix for degree 3:");
%for(i=1;i<=n3;i++)
%{
%  for(j=i;j<=n3;j++)
%  {
%    pw=NF(bas3[i]*bas3[j],i1);
%    if(pw<>0)
%    {
%      ma[i,j]=pw;
%      ma[j,i]=pw;
%//      print([i,j]);
%
%    } 
%  }
%}
%
%for(i=1;i<=n2;i++)
%{
%  for(j=1;j<=n4;j++)
%  {
%    pw=NF(bas2[i]*bas4[j],i1);
%    if(pw<>0)
%    {
%      mb[i,j]=pw;
%//      print([i,j]);
%
%    }
%  }
%}
%
%print("Rank of the Gram matrix for degree 3:           "+string(rank(ma)));
%print("Rank of the Gram matrix for degree 2 x degree 4:"+string(rank(mb)));
%\end{verbatim}
\end{scriptsize}
%\end{lstlisting}
\bibliographystyle{plain}
\bibliography{remke2}

\end{document}